\numberwithin{equation}{section}
\theoremstyle{plain}
\newtheorem{theorem}{Theorem}[section]
\newtheorem{proposition}[theorem]{Proposition}
\newtheorem{lemma}[theorem]{Lemma}
\newtheorem{corollary}[theorem]{Corollary}
\newtheorem{fact}[theorem]{Fact}
\theoremstyle{definition}
\newtheorem{remark}[theorem]{Remark}
\newtheorem{definition}[theorem]{Definition}
\DeclareMathOperator{\Ann}{Ann}
\newcommand{\B}{\mathcal{B}}
\def\n3#1{\vert  \! \vert \! \vert \, #1 \, \vert \!
  \vert \! \vert }
\def\lbl#1{\label{#1}}
\def\bb#1{\bibitem{#1}}
\def\R{{\mathbb R}}
\def\N{{\mathbb N}}
\def\C{{\mathbb C}}
\def\K{{\mathbb K}}
\def\B{{\mathbb B}}
\begin{document}

\begin{center}{\LARGE \bf Unit-free versions of the Vidav--Palmer \\ theorem and of the \\ Blecher--Ruan--Sinclair non-associative \\ \vspace{0.3cm} characterization of unital $C^*$-algebras}\footnote{AMS Classification 2022: 46L05, 46L70. \\ \indent Key words: $C^*$-algebra, $JB^*$-algebra, Vidav--Palmer theorem.}
\end{center}

\begin{center}{\Large \'Angel Rodr{\'\i}guez Palacios}\\
\smallskip
\small{Universidad de Granada.\\ Departamento
de An\'{a}lisis Matem\'{a}tico. Facultad de Ciencias.\\
18071 Granada (Spain)
 \\  apalacio@ugr.es}
\bigskip
\end{center}

\begin{center}
To the memory of my wife In\'es Espinar de la Paz
\end{center}

\section*{Abstract}
We prove unit-free versions of both the associative and the non-associative Vidav--Palmer theorems. Then these results are applied to prove a unit-free version of the Blecher--Ruan--Sinclair non-associative characterization of unital $C^*$-algebras.

\section{Introduction}

Let $X$ be a normed space over
$\mathbb K =\R$ or $\C $, and let $u$ be a norm-one element in~$X$. By a {\it state of $X$} relative to $u$ we mean a functional $f$ in the closed unit ball
$\mathbb{B}_{X'}$ of the dual $X'$ of $X$ satisfying $f(u)=1$. We denote by
$D(X,u)$ the set of all states of $X$ relative to $u$, and remark
that, by the Hahn-Banach and Banach-Alaoglu theorems, $D(X,u)$
becomes a non-empty $w^*$-compact convex subset of~$X'$. Given
$x\in X$, we define the  {\it
numerical range} of $x$ relative to $(X,u)$ as the non-empty compact convex
subset of $\mathbb K$ given by
$$V(X,u,x):=\{f(x):f\in D(X,u)\}.$$
In the case that $\K =\C $, an element $x\in X$ is
said to be {\it hermitian} relative to $(X,u)$ if
$V(X,u,x)\subseteq \mathbb R$, and we denote by $H(X,u)$ the closed real subspace of $X$ consisting of all hermitian elements of $X$ relative to $(X,u)$.

To be precise, let us say that, as in \cite{CR,CRbis}, our algebras are not assumed to be associative, and that by a {\it  normed algebra}  we mean a real or complex algebra $A$ endowed with an {\it algebra norm}, i.e. a vector space norm $\Vert \cdot \Vert $ on $A$ such that $\Vert ab\Vert \leq \Vert a\Vert \Vert b\Vert $ for all $a,b\in A$. Nevertheless, according to \cite[$\S$1.2.2]{CR}, our \textit{$C^*$-algebras} without any adjective will always be associative.

Following \cite{CR,P1}, by a \textit{norm-unital normed algebra} we mean a normed algebra having a norm-one unit ${\bf 1}$, and by a \textit{$V$-algebra} we mean a norm-unital normed complex algebra $A$ such that $A$ is equal to the complex linear hull of $H(A,{\bf 1})$. Actually, if $A$ is a $V$-algebra, then the equality
\begin{equation} \label{Vaxion}
A=H(A,{\bf 1})\oplus iH(A,{\bf 1})
\end{equation}
 holds. This follows from the Bohnenblust--Karlin theorem \cite{BK} asserting that the unit of any norm-unital associative normed algebra $A$ is a vertex of $\mathbb{B}_A$, a result which is easily generalized to the general non-associative case \cite[Corollary 2.1.13]{CR}. As a consequence of \eqref{Vaxion}, each $V$-algebra $A$ is endowed with a unique conjugate-linear vector
space involution $*$ (called the \textit{natural involution} of $A$)  such that the set of all $*$-invariant elements of $A$ coincides with $H(A,{\bf 1})$. It is far from being trivial the question of wether or not the natural involution of any $V$-algebra is an algebra involution. Restricting for the moment to the associative case, the answer to this question is affirmative. This is a part of the celebrated Vidav--Palmer theorem, which reads as follows.

\begin{theorem} \label{VP}
{\rm \cite{Vi,Pal1} (cf.  \cite[Lemma 2.2.5 and Theorem 2.3.32]{CR})}
Unital $C^*$-algebras are precisely the complete associative $V$-algebras endowed with their natural involutions.
\end{theorem}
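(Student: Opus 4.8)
The plan is to prove the two inclusions separately, using throughout the decomposition \eqref{Vaxion} and the natural involution $*$, both of which are already available for every $V$-algebra.

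For the easy inclusion, let $A$ be a unital $C^*$-algebra with $C^*$-involution $*$ and self-adjoint part $A_{sa}=\{a\in A:a^*=a\}$. First I would identify the numerical states $D(A,{\bf 1})$ with the $C^*$-states of $A$: on a unital $C^*$-algebra a functional $f$ with $\|f\|=f({\bf 1})=1$ is automatically positive, and conversely every positive functional of norm one satisfies $f({\bf 1})=1$. Since positive functionals are real on $A_{sa}$, this gives $H(A,{\bf 1})=A_{sa}$, whence $A=A_{sa}+iA_{sa}=H(A,{\bf 1})+iH(A,{\bf 1})$, so $A$ is a complete associative $V$-algebra; and as the natural involution is the unique conjugate-linear involution fixing $H(A,{\bf 1})=A_{sa}$, it coincides with the $C^*$-involution.

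For the hard inclusion, let $A$ be a complete associative $V$-algebra with natural involution $*$ and put $H=H(A,{\bf 1})$, so $A=H\oplus iH$. I would first recall the standard exponential description of hermitian elements in a complete norm-unital algebra: $h\in H$ if and only if $\|\exp(i\lambda h)\|=1$ for every $\lambda\in\R$. The technical heart is then \emph{Vidav's Lemma}: $h\in H\Rightarrow h^2\in H$; I expect this to be the main obstacle, the proof estimating $\|\exp(i\lambda h)\|$ through the decomposition $h^2=u+iv$ ($u,v\in H$) and forcing $v=0$, since a nonzero $iH$-component of $h^2$ would make the exponential grow. Granting it, the identity $hk+kh=(h+k)^2-h^2-k^2$ shows $H$ is closed under the Jordan product, and $hkh=2\,h\circ(h\circ k)-h^2\circ k$ then lies in $H$ as well. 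To reach commutators I would use that, for $h\in H$, conjugation $U_\lambda(x)=\exp(i\lambda h)\,x\,\exp(-i\lambda h)$ is a one-parameter group of isometric unital algebra automorphisms (associativity is used here); such maps preserve $D(A,{\bf1})$, hence numerical ranges, hence $H$, and differentiating $U_\lambda(k)\in H$ at $\lambda=0$ gives $i[h,k]\in H$ for all $h,k\in H$. Splitting $hk=\tfrac12(hk+kh)+\tfrac12[h,k]$ into its $H$- and $iH$-parts now yields $(hk)^*=kh$, and by conjugate-linearity and $A=H\oplus iH$ this extends to $(ab)^*=b^*a^*$, so $*$ is an algebra involution.

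It remains to produce the $C^*$-identity, which I regard as the second hard step. Here I would invoke Sinclair's theorem, $\|h\|=r(h)$ for every hermitian $h$; since $\mathrm{sp}(h^2)=\mathrm{sp}(h)^2$, this gives $\|h^2\|=\|h\|^2$ on $H$. Because $H=H(A,{\bf 1})$ forces $\mathrm{co}\,\mathrm{sp}(h)\subseteq V(A,{\bf 1},h)\subseteq\R$, every self-adjoint element has real spectrum, so $A$ is a hermitian Banach $*$-algebra, and the Shirali--Ford theorem then gives symmetry, $\mathrm{sp}(a^*a)\subseteq[0,\infty)$ for all $a$. Consequently $|a|:=r(a^*a)^{1/2}$ is, by Pt\'ak's formula, a $C^*$-norm on $A$ agreeing with the given norm on $H$ (both equal $r$ there). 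I would finish by checking that $|\cdot|$ coincides with $\|\cdot\|$ on all of $A$, which delivers $\|a^*a\|=\|a\|^2$ and, after identifying $A$ with its completion, exhibits it as a unital $C^*$-algebra whose involution is the natural one. The two places I expect to cost genuine work are Vidav's Lemma and this final coincidence of norms, the latter resting on the nontrivial theorems of Sinclair, Shirali--Ford, and Pt\'ak.
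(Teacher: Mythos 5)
The paper offers no proof of Theorem \ref{VP} at all: it is quoted from Vidav and Palmer \cite{Vi,Pal1}, with \cite[Lemma 2.2.5 and Theorem 2.3.32]{CR} and several monographs cited for complete proofs, so there is no internal argument to compare yours against; the relevant benchmark is the classical proof in those references. Measured against it, your outline follows the standard skeleton for most of the way: the easy direction via automatic positivity of norm-one unital functionals (correct), and the hard direction via the exponential characterization of hermitians, Vidav's lemma $h\in H\Rightarrow h^2\in H$, closure of $H$ under Jordan products by polarization, $i[h,k]\in H$ by differentiating the isometric conjugation automorphisms $x\mapsto \exp(i\lambda h)\,x\,\exp(-i\lambda h)$ (the one point where associativity enters, as you note), whence $*$ is an algebra involution, and then Sinclair's theorem $\|h\|=r(h)$. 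Your finish is where you genuinely diverge: real spectra for hermitians, Shirali--Ford symmetry, and Pt\'ak's function $p(a)=r(a^*a)^{1/2}$ is a known alternative route (essentially Pt\'ak's own proof of Vidav--Palmer), whereas the proof in \cite{BD} and \cite{CR} instead shows every $f\in D(A,{\bf 1})$ is positive (using $V(A,{\bf 1},h)=\mathrm{co}\,\mathrm{sp}(h)$ for $h\in H$) and concludes via a representation/Russo--Dye-type argument. Both routes work; yours trades the positivity-of-states step for the heavier machinery of Shirali--Ford and Pt\'ak.

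The one concrete gap is the step you defer at the end: showing $p=\|\cdot\|$ on all of $A$. From what you establish you get only \emph{equivalence} of the two norms: since $p$ is a $C^*$-seminorm one has $p\bigl(\frac{1}{2}(a+a^*)\bigr)\le p(a)$, whence $\|a\|\le \|h\|+\|k\|\le 2p(a)$, and $p(a)^2=r(a^*a)\le\|a^*a\|\le\|a^*\|\,\|a\|$ with $*$ continuous gives the other bound; so $(A,p)$ is a unital $C^*$-algebra, but the theorem demands equality, and equivalence alone does not yield the Gelfand--Naimark identity for $\|\cdot\|$. The missing idea is a Russo--Dye/Harris argument in $(A,p)$: the closed unit $p$-ball is the closed convex hull of $\{\exp(ih):h\in H\}$, and $\|\exp(ih)\|=1$ because $h\in H(A,{\bf 1})$, so $\|a\|\le p(a)$; then $p(a)^2=r(a^*a)\le\|a^*a\|\le\|a^*\|\,\|a\|\le p(a^*)\,\|a\|=p(a)\,\|a\|$ gives $p(a)\le\|a\|$, closing the proof. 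Two smaller remarks: no passage to a completion is needed, since $A$ is complete by hypothesis and $p$ turns out to equal the complete norm; and your heuristic for Vidav's lemma (``a nonzero $iH$-component of $h^2$ would make the exponential grow'') is the right intuition, but the actual argument is considerably more delicate, so as written that step, like the last one, is a citation to the literature rather than a proof.
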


Complete proofs of the above theorem can be found in the books \cite{BD1,BD2,BD,CR,DB,P1,P}.

For the treatment of the Vidav--Palmer theorem in the general non-associative setting, the so-called non-commutative $JB^*$-algebras need to be considered. We recall that
\textit{non-commutative $JB^*$-algebras} are defined as those
complete nor\-med non-commutative Jordan complex $*$-algebras $A$ satisfying
$\Vert U_{a}(a^*)\Vert =\Vert a\Vert ^{3}$ for every $a\in A$,
where $U_{a}(b):=a(ab+ba)-a^{2}b$ for all $a,b \in A$, and that non-commutative $JB^*$-algebras which are commutative
are simply called \textit{$JB^*$-algebras}. According to Wright \cite{W} (who established a categorical correspondence between $JB^*$-algebras and $JB$-algebras \cite{HOS}), $JB^*$-algebras were introduced by Kaplansky in his final lecture to the 1976
St.Andrews Colloquium of the Edinburgh Mathematical Society under the name (obsolete since Youngson's paper \cite{Y0}) of `Jordan $C^*$-algebras'. Actually, without suggesting any definition, Kaplansky's Jordan $C^*$-algebras were previously considered at the end of \cite{70K} (see \cite[p. xv]{CR} and the comments therein). Roughly speaking, non-commutative $JB^*$-algebras can be described by means of the so-called representation theory \cite{AJ,Br1,PPR,PPR1} (cf. \cite[Corollary 6.1.11 and Theorem 6.1.112]{CRbis}). Note finally that, in a very precise sense, non-commutative $JB^*$-algebras become the largest non-associative generalization of $C^*$-algebras \cite{KMR} (cf. \cite[Theorem 5.9.9 and Corollary~5.9.12]{CRbis}).

After successive contributions in \cite{Y0,Y,MMR,R10}, the ``non-associative Vidav--Palmer theorem" reads as follows.

\begin{theorem} \label{non-associativeVP}
{\rm \cite[Lemma 2.2.5 and Theorem 3.3.11]{CR}}
Unital non-com\-mutative $JB^*$-algebras are precisely the complete {\rm (possibly non-associative)} $V$-algebras endowed with their natural involutions.
\end{theorem}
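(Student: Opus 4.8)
Set $H:=H(A,{\bf 1})$. My plan is to prove the two inclusions separately, using throughout the key observation that $D(A,{\bf 1})$, $V(A,{\bf 1},\cdot)$ and $H$ — and hence the natural involution — depend only on the underlying normed space and on the norm-one element ${\bf 1}$, not on the product. For the easy inclusion (a unital non-commutative $JB^*$-algebra $A$ is a complete $V$-algebra with the stated involution) I would, writing $A_{\rm sa}:=\{a:a^*=a\}$ and using that $A=A_{\rm sa}\oplus iA_{\rm sa}$ as real spaces, reduce everything to proving $H=A_{\rm sa}$. Since non-commutative Jordan algebras are power-associative, each self-adjoint $a$ generates with ${\bf 1}$ a closed commutative associative $*$-subalgebra $B$; the computation $U_a(a^*)=U_a(a)=a^3$ yields $\Vert a^3\Vert=\Vert a\Vert^3$, so $B$ is a commutative $C^*$-algebra, in which self-adjoint elements are hermitian. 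As the restriction to $B$ of any state of $A$ is a state of $B$, I would get $V(A,{\bf 1},a)\subseteq V(B,{\bf 1},a)\subseteq\R$, i.e.\ $A_{\rm sa}\subseteq H$; conversely, for $h\in H$ I would write $h=a+ib$ with $a,b\in A_{\rm sa}\subseteq H$, whence $ib\in H\cap iH=\{0\}$ and $h=a\in A_{\rm sa}$. This gives $H=A_{\rm sa}$, so $A=H\oplus iH$ is a $V$-algebra and $*$ is its natural involution.

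For the converse (a complete $V$-algebra $A$ with natural involution $*$ is a unital non-commutative $JB^*$-algebra) I would introduce the symmetrized product $x\circ y:=\tfrac12(xy+yx)$. Then $\Vert x\circ y\Vert\le\Vert x\Vert\,\Vert y\Vert$, the element ${\bf 1}$ remains a norm-one unit, and by the product-independence noted above $A^+:=(A,\circ)$ is a complete \emph{commutative} $V$-algebra with the \emph{same} space $H$ and the same natural involution $*$. Invoking the commutative (Jordan) case of the theorem — itself the analytic core, proved via the non-associative Vidav lemma that $H$ is closed under squaring together with numerical-range estimates — I would conclude that $A^+$ is a $JB^*$-algebra: the product $\circ$ obeys the Jordan identity, $*$ is a $\circ$-algebra involution, and $\Vert U^\circ_a(a^*)\Vert=\Vert a\Vert^3$, where $U^\circ$ is the $U$-operator of $A^+$ and the square $a^2=a\circ a$ coincides with the original one. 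It then remains to recover the missing, purely antisymmetric, information: that $A$ is \emph{flexible}, $(ab)a=a(ba)$, and that $*$ is an algebra involution for the original product, $(ab)^*=b^*a^*$. Granting these, $A$ is a non-commutative Jordan $*$-algebra, the identity $U_a=U^\circ_a$ — valid in every flexible algebra — transfers the norm condition $\Vert U_a(a^*)\Vert=\Vert a\Vert^3$ from $A^+$, and $A$ is the desired non-commutative $JB^*$-algebra.

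For these two facts the tool I would rely on is that, for each $h\in H$, the left and right multiplications $L_h,R_h$ are hermitian elements of the associative norm-unital normed algebra $BL(A)$ (relative to $\Id$); hence so are $L_h+R_h=2L^\circ_h$ (which merely re-encodes the known structure of $A^+$) and $L_h-R_h$. Flexibility is equivalent, after reducing to hermitian arguments via $A=H\oplus iH$ and complex-multilinearity, to the vanishing of $[L_h,R_h]$; using the elementary fact that in $BL(A)$ the element $i(ST-TS)$ is hermitian whenever $S,T$ are, one sees that $i[L_h,R_h]$ is hermitian, while $[L_h,R_h]({\bf 1})=h^2-h^2=0$, so $[L_h,R_h]$ is a \emph{skew} operator annihilating ${\bf 1}$. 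Multiplicativity of $*$ likewise reduces to showing $[h,k]:=hk-kh$ is skew for all $h,k\in H$: then $(hk)^*=(h\circ k)^*+\tfrac12[h,k]^*=h\circ k-\tfrac12[h,k]=kh=k^*h^*$, and the general identity follows by conjugate-linearity and bilinearity.

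The hard part will be exactly this last passage from the symmetrized Jordan data back to the original product. Since $A^+$ is blind to the antisymmetric part, neither flexibility nor the skewness of the commutators $[h,k]$ is visible from $A^+$ alone, and both must be extracted from a delicate numerical-range analysis in $BL(A)$ of the hermitian operators $L_h$, $R_h$ and their products and commutators. Concretely, I expect the crux to be proving that the skew operator $[L_h,R_h]$, already known to kill ${\bf 1}$, in fact kills everything — this, together with the skewness of $[h,k]$, is what upgrades the commutative $JB^*$-structure of $A^+$ to the full non-commutative $JB^*$-structure of $A$.
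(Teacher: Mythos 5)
Your plan is faithful, in outline, to the route the paper actually relies on: the paper gives no in-text proof of this theorem, quoting it from \cite[Lemma 2.2.5 and Theorem 3.3.11]{CR} and explaining that the proof rests on two facts --- that the natural involution of any $V$-algebra is an algebra involution \cite{R10}, and that every $V$-algebra is then a non-commutative Jordan algebra \cite{MMR} --- after which the Jordan-case Vidav theorem of Youngson \cite{Y0} does the analytic work on $A^{\rm sym}$. Your easy direction (reducing $H(A,{\bf 1})=A_{\rm sa}$ to single-generated commutative $C^*$-subalgebras) is essentially the proof of \cite[Lemma 2.2.5]{CR}, and your two reductions are correct: multiplicativity of $*$ is indeed equivalent, given $h\circ k\in H(A,{\bf 1})$ from Vidav's squaring lemma, to the skewness of $[h,k]$ for $h,k\in H(A,{\bf 1})$, and flexibility is equivalent, by linearization and $A=H\oplus iH$, to $[L_h,R_h]=0$ for $h\in H(A,{\bf 1})$.

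However, the proposal stops exactly where the entire difficulty of the theorem lies, and at two points it would fail as written. First, the ``commutative case of the theorem'' you invoke for $A^+=(A,\circ)$ is not Youngson's theorem: \cite{Y0} \emph{presupposes} that the algebra is a Jordan algebra, whereas $A^+$ is at this stage only a commutative complete $V$-algebra, and deriving the Jordan identity for it from the $V$-axiom is part of the content of \cite{MMR} (cf.\ \cite[Theorem 2.4.11]{CR}); invoking the commutative case of the very theorem being proved, without supplying its proof, is circular. Second, for the two ``missing'' facts you offer no argument beyond formal bookkeeping, and the properties you establish cannot suffice: knowing that $[L_h,R_h]$ lies in $iH(BL(A),I_A)$ and annihilates ${\bf 1}$ does not force it to vanish, since such operators abound --- already in any $C^*$-algebra, $i(L_h-R_h)$ for a non-central self-adjoint $h$ lies in $iH(BL(A),I_A)$, kills ${\bf 1}$, and is nonzero. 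So the vanishing of $[L_h,R_h]$ and the skewness of $[h,k]$ genuinely require the specific arguments of \cite{R10} and \cite{MMR}, which exploit much more than hermiticity plus vanishing at ${\bf 1}$. (Alternatively, had you actually secured that $A^{\rm sym}$ is a $JB^*$-algebra, you could bypass both cruxes at once via the theorem of \cite{R91} (cf.\ \cite[Proposition 3.5.31]{CR}) that the paper itself uses elsewhere: any complete normed complex algebra whose symmetrization is a $JB^*$-algebra is automatically a non-commutative $JB^*$-algebra, flexibility and multiplicativity of $*$ included.) As submitted, your proposal is an accurate road map whose destination --- the analytic core --- is left unreached.
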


Among other tools, the proof of the above theorem applies that the natural involution of any $V$-algebra is an algebra involution \cite{R10} (cf. \cite[Theorem 2.3.8]{CR}), and that then every $V$-algebra is a non-commutative Jordan algebra \cite{MMR} (cf. \cite[Theorem  2.4.11]{CR}), thus showing how, surprisingly, the non-commutative Jordan identities $(ab)a=a(ba)$ and $(a^2\bullet b)\bullet a=a^2\bullet (b\bullet a)$ follow from natural analytic requirements. We note that Theorem \ref{non-associativeVP} contains Theorem \ref{VP} because $C^*$-algebras are precisely those non-commutative $JB^*$-algebras which are associative \cite[Fact 3.3.2]{CR}.

In the current paper we prove unit-free versions of Theorems \ref{VP} and \ref{non-associativeVP} (see Definitions \ref{definition1} and \ref{definition2}, and Theorems \ref{unit-freeVP} and \ref{unit-freenon-associativeVP}), which contain their unital forerunners (see Lemma \ref{4.100}(i)). These results significantly improve
\cite[Theorem 4.18]{R99} and the paragraph following it.

Now let us deal with the Blecher--Ruan--Sinclair non-associative characterization of unital $C^*$-algebras.

Let $X$ be a vector space over $\K$, and let $n$ be a natural
number. Then the vector space
$M_n(X)$, of all $n\times n$ matrices with entries in $X$, becomes
an $M_n(\K )$-bimodule, by denoting $\alpha x$ and
$x\alpha $ (for $\alpha \in M_n(\K )$ and $x\in M_n(X)$) the
elements of $M_n(X)$ formed by left and right multiplication
of~$x$ by~$\alpha $ in the obvious sense of matrix multiplication.
Given $p,q\in \N$ with $p+q=n$, $y\in M_p(X)$, and $z\in M_q(X)$,
we denote by $y\oplus z$ the element of $M_n(X)$ whose principal
diagonal blocks (from top to the bottom) are $y$ and~$z$ and whose
off-diagonal blocks have~$0$ (in $X$) at each entry.  In what follows, $M_n(\C)$ will be seen endowed with the
involution and the corresponding $C^*$-norm
$\vert \cdot \vert _n$ deriving from its natural
identification with the algebra of all linear operators on the
complex Hilbert space $\C ^n$.

\begin{definition} \label{definition0}
{\rm Let $A$ be a normed complex algebra. We say that $A$ \textit{enjoys a matricial  $L_\infty $-structure}
if, for each $n\in \N$, the complex algebra $M_n(A)$ is equipped with an algebra
norm  $\Vert \cdot \Vert _n$ in such a way that the following conditions are satisfied:
\begin{enumerate}
\item  [\rm (i)] $ \Vert \cdot \Vert _1=\Vert \cdot \Vert $ on $M_1(A)=A$,
\item [\rm (ii)] $\Vert \alpha a\beta \Vert _n\leq \vert \alpha \vert _n\Vert
a \Vert _n\vert \beta \vert _n$ for all $n\in \N$, $a\in M_n(A)$, and
$\alpha ,\beta \in M_n(\C)$,
\item [\rm (iii)] $\Vert b\oplus c\Vert
_{n+m}=\max \{\Vert b\Vert _n,\Vert c\Vert _m\}$ for all $n,m\in \N$, $b\in
M_n(A)$, and $c\in M_m(A)$.
\end{enumerate}
A less demanding situation is that of the existence of an algebra norm $\Vert \cdot \Vert _2$ on
$M_2(A)$ satisfying
\begin{enumerate}
\item [\rm (iv)] $\Vert
\alpha a\beta \Vert _2\leq \vert \alpha \vert _2\Vert a \Vert
_2\vert \beta \vert _2$ for every $a\in M_2(A)$ and all $\alpha
,\beta \in M_2(\C)$,
\item [\rm (v)] $\Vert b\oplus c\Vert _2=\max \{\Vert
b\Vert ,\Vert c\Vert \}$ for all $b,c\in A$.
\end{enumerate}
In this case we say that $A$ \textit{enjoys a matricial  $L_\infty ^2$-structure}. Let us finally recall that, according to \cite[$\S$2.4.26]{CR}, {\it non-associative
$C^*$-algebras} are defined by altogether removing in the definition of
$C^*$-algebras any requirement of associativity.}
\end{definition}

Now, with some refinements obtained in \cite{CR}, the Blecher--Ruan--Sinclair non-associative characterization of unital $C^*$-algebras reads as follows.

\begin{theorem} \label{BlRuSi}
{\rm \cite[Corollary 3.3]{BlRuSi} (cf.  \cite[Theorem  2.4.27]{CR})}
Let $A$ be a normed complex
algebra. Then the following conditions are equivalent:
\begin{enumerate}
\item [\rm (i)] $A$ is a unital non-associative  $C^*$-algebra enjoying a matricial  $L_\infty $-structure.
\item  [\rm (ii)] $A$ is a complete $V$-algebra endowed with its natural involution, and enjoys a matricial $L_\infty
$-structure.
\item  [\rm (iii)] $A$ is a unital non-associative
$C^*$-algebra  enjoying  a matricial  $L_\infty ^2$-structure.
 \item [\rm (iv)]
$A$ is a complete $V$-algebra endowed with its natural involution, and enjoys a matricial $L_\infty
^2$-structure.
\item [\rm (v)]  $A$ is a unital $C^*$-algebra.
\end{enumerate}
\end{theorem}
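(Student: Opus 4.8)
The plan is to route every implication through condition (v), using Theorem~\ref{non-associativeVP} as the bridge between the analytic hypotheses and the algebraic conclusion. First I would record the dictionary it provides: a complete $V$-algebra endowed with its natural involution is precisely a unital non-commutative $JB^*$-algebra, so that (ii) and (iv) read as such an algebra carrying, respectively, a matricial $L_\infty$- or $L_\infty^2$-structure; moreover, by \cite{KMR} together with Theorem~\ref{non-associativeVP}, every unital non-associative $C^*$-algebra is already a complete $V$-algebra with its natural involution, so (i) and (iii) entail the very same structure. Finally, by \cite[Fact 3.3.2]{CR} an associative non-commutative $JB^*$-algebra is a unital $C^*$-algebra; hence, once associativity is secured, (v) follows automatically.

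With this dictionary in hand I would organize the argument around the cycle (v)$\Rightarrow$(i)$\Rightarrow$(iii)$\Rightarrow$(iv)$\Rightarrow$(v), supplemented by (i)$\Rightarrow$(ii)$\Rightarrow$(iv). All but one of these are routine. The implications (i)$\Rightarrow$(iii) and (ii)$\Rightarrow$(iv) merely discard the norms $\Vert\cdot\Vert_n$ with $n\neq 2$, while (i)$\Rightarrow$(ii) and (iii)$\Rightarrow$(iv) only substitute the dictionary above, leaving the matricial norms untouched. For (v)$\Rightarrow$(i) I would equip each $M_n(A)$ with its unique $C^*$-norm: the unital $*$-embedding $M_n(\C)\hookrightarrow M_n(A)$ is isometric, which yields axiom (ii); the block-diagonal $C^*$-norm identity yields axiom (iii); and axiom (i) is the normalisation at $n=1$. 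Thus everything reduces to the single substantial implication (iv)$\Rightarrow$(v).

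So let $A$ be a complete $V$-algebra with natural involution $*$ — hence a unital non-commutative $JB^*$-algebra — carrying an algebra norm $\Vert\cdot\Vert_2$ on $M_2(A)$ subject to (iv) and (v); the task is to prove that the product of $A$ is associative. First I would pin down the norms of the matrix units: left and right multiplication by the partial isometries $\left(\begin{smallmatrix}0&1\\0&0\end{smallmatrix}\right)$ and $\left(\begin{smallmatrix}0&0\\1&0\end{smallmatrix}\right)$, which have $\vert\cdot\vert_2=1$, combined with (v), give $\Vert E_{ij}(a)\Vert_2=\Vert a\Vert$ for all $i,j$ and $a\in A$, where $E_{ij}(a)$ carries $a$ in the $(i,j)$ entry and $0$ elsewhere. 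A direct multiplication check then confines the associator of $M_2(A)$ to a single corner, $\bigl(E_{12}(x)E_{21}(y)\bigr)E_{12}(z)-E_{12}(x)\bigl(E_{21}(y)E_{12}(z)\bigr)=E_{12}\bigl((xy)z-x(yz)\bigr)$, showing that associativity of $A$ is genuinely a $2\times2$ phenomenon: $A$ is associative precisely when $M_2(A)$ is.

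The main obstacle is to convert these norm identities into the actual vanishing of the associator, for submultiplicativity of $\Vert\cdot\Vert_2$ by itself only bounds $\Vert(xy)z-x(yz)\Vert$ by a multiple of $\Vert x\Vert\Vert y\Vert\Vert z\Vert$; the vanishing must be squeezed out of the rigidity of the non-commutative $JB^*$-structure. The route I would take is to regard the matricial norms as an abstract operator-space structure on $A$ and, via the representation theorem for operator spaces, to produce a complete isometry of $A$ into some $B(H)$ under which axioms (ii)--(v) force the multiplication to be completely contractive, hence to coincide with the associative composition in $B(H)$. The genuinely delicate point in the $L_\infty^2$ version is that only the norm at $n=2$ is given, so one must first reconstruct the higher matricial norms, and this is exactly where the determinacy of the matricial norms of a non-commutative $JB^*$-algebra is used. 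Once $A$ is known to be associative, \cite[Fact 3.3.2]{CR} upgrades it to a unital $C^*$-algebra, establishing (v) and closing the cycle.
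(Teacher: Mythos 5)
Your scaffolding is sound: reducing everything to (iv)$\Rightarrow$(v), the check of (v)$\Rightarrow$(i) via the unique $C^*$-norms on $M_n(A)$ (this is \cite[Proposition 2.4.22]{CR}), the corner-norm identities $\Vert E_{ij}(a)\Vert_2=\Vert a\Vert$ obtained from condition (iv) of Definition \ref{definition0}, and the associator identity confining non-associativity to a corner of $M_2(A)$ are all correct (one citation quibble: that a unital non-associative $C^*$-algebra is a $V$-algebra comes from alternativity, i.e.\ \cite{R11} (cf.\ \cite[Theorem 3.5.53]{CR}), not from \cite{KMR}). But the single implication you yourself identify as substantial, (iv)$\Rightarrow$(v), is not proved. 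Your plan appeals to the representation theorem for operator spaces, which requires the whole sequence of matrix norms, while an $L_\infty^2$-structure provides only $\Vert\cdot\Vert_2$; and the proposed repair --- reconstructing the higher norms from ``the determinacy of the matricial norms of a non-commutative $JB^*$-algebra'' --- is not a theorem and cannot be one: a non-commutative $JB^*$-algebra admits no matricial $L_\infty^2$-structure at all unless it is already a $C^*$-algebra (that is precisely Corollary \ref{sinetiqueta} of this paper), so positing canonical higher matricial norms presupposes the conclusion. Even granting a full $L_\infty$-structure, Ruan's theorem \cite{Rua} produces only a \emph{linear} complete isometry into some $BL(H)$; the claim that the axioms then ``force the multiplication \ldots{} to coincide with the associative composition in $B(H)$'' is exactly the hard content of \cite{BlRuSi}, which is established there by a genuine construction (a completely contractive unital map out of a Haagerup-type tensor product), not by the embedding alone. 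In effect your argument for the key step re-invokes the theorem being proved.

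For comparison, although the paper quotes Theorem \ref{BlRuSi} from \cite{BlRuSi} and \cite[Theorem 2.4.27]{CR}, its self-contained proof of the unit-free Theorem \ref{unit-freeBlRuSi} (which contains the unital case via Lemma \ref{4.100}(i)) shows what a complete argument looks like, and it uses no operator-space theory. The crux is Lemma \ref{estupendo}: using condition (v) of Definition \ref{definition0}, Lumer's characterization of hermitian elements \cite{Lu} (cf.\ \cite[Proposition 2.1.5]{CR}), and the exponential/commutator argument of Proposition \ref{propisition1} applied to $l_\alpha-r_\alpha$ for hermitian $\alpha\in M_2(\C)$, one shows that all four displayed corner matrices with hermitian entries are hermitian in $(M_2(A),\Vert\cdot\Vert_2)$, so that $M_2(A)$ is itself a complete $V$-algebra. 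The non-associative Vidav--Palmer Theorem \ref{non-associativeVP} then makes $M_2(A)$ a non-commutative Jordan --- in particular flexible --- algebra, and the purely algebraic \cite[Lemma 2.4.23]{CR} converts flexibility of $M_2(A)$ into associativity of $A$; the associative Vidav--Palmer Theorem \ref{VP} finishes. Note that your corner associator identity, while true, is strictly weaker than what is needed at this point: it shows $A$ is associative iff $M_2(A)$ is, but the Vidav--Palmer machinery only delivers \emph{flexibility} of $M_2(A)$, and squeezing associativity of $A$ out of mere flexibility of $M_2(A)$ is exactly what \cite[Lemma 2.4.23]{CR} does and your identity does not.
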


In relation to the above theorem, it is worth mentioning that, according to \cite[Theorem 2.2]{BlRuSi} (see also the last sentence in \cite[Theorem  2.4.27]{CR}), a unital $C^*$-algebra enjoys a unique matricial  $L_\infty $-structure, just the one given by \cite[Proposition 2.4.22]{CR}.

In the current paper we apply Theorems \ref{unit-freeVP} and \ref{unit-freenon-associativeVP} (already quoted) to prove a unit-free version of Theorem \ref{BlRuSi} (see Theorem~\ref{unit-freeBlRuSi}). We note that our proof of Theorem \ref{unit-freeBlRuSi} is complete, meaning this that its unital forerunner (namely Theorem \ref{BlRuSi}) is never applied. We also prove that any (possibly non-unital) $C^*$-algebra enjoys a unique matricial  $L_\infty $--structure, just the one given by \cite[Proposition~2.4.22]{CR} (see Theorem \ref{final}).

{\it Alternative $C^*$-algebras} are defined as those complete normed
alternative complex $*$-algebras $A$ satisfying the \textit{Gelfand--Naimark axiom}   $\Vert a^*a\Vert =\Vert a\Vert
^2$ for every $a\in A$. As happened with $JB^*$-algebras, alternative $C^*$-algebras were introduced by Kaplansky at the end of \cite{70K} (see \cite[p. xiii--xiv]{CR} and the comments therein).
As a matter of fact, alternative $C^*$-algebras become  particular examples  of non-commu\-tative
$JB^*$-algebras. Indeed, according to  \cite[Fact~3.3.2]{CR}, \textit{alternative $C^*$-algebras are precisely those
non-commutative $JB^*$-algebras which are alternative}. Therefore, as a straightforward consequence of Theorem \ref{unit-freenon-associativeVP}, already quoted, we obtain a unit-free Vidav--Palmer type theorem for alternative $C^*$-algebras (see Corollary \ref{alternativeunit-freeVP}).

Although many results for alternative $C^*$-algebras appear in the literature (including \cite{CR,CRbis})  camouflaged as results for non-commutative $JB^*$-algebras, alternative $C^*$-algebras have their own life \cite{MMR,PPR,R11,R99,R22,Br,CRtris,CaR11,CR16,Ho,R5,R88}, mainly due to the fact that, in a very precise sense, they become the largest non-associative generalizations of $C^*$-algebras when these are defined by means of the Gelfand--Naimark axiom \cite{R11} (cf.~\cite[Theorem 3.5.53]{CR}).

Concerning the unit-free version of the Vidav--Palmer theorem, the peculiar behaviour of alternative $C^*$-algebras, as particular examples of non-commutative $JB^*$-algebras, is explored in Section 4. Thus in Theorem \ref{finalfinal} we prove a very strong refinement of Corollary \ref{alternativeunit-freeVP} quoted above. We note that Theorem \ref{finalfinal} also refines \cite[Theorem 4.20]{R99}. We also note that the proof of Theorem \ref{finalfinal} does not depend on the results proved in Sections 2 and 3. It goes without saying that Theorem \ref{finalfinal} has interesting consequences in the associative setting (see Corollaries \ref{unit-freeVPbis} and~\ref{unit-freeVPtris}).

Left operator $V$-algebras introduced in Definition \ref{definition1} play a central role throughout this paper. Thus in Section 5 we begin to develop a general theory of such algebras. Indeed, operator $V$-algebras are stable by passing to quotients by closed ideals, and by passing to $\ell _\infty $-sums (see Propositions \ref{Videal} and \ref{ya}). Moreover  operator $V$-algebras behave reasonably well concerning the so-called contractive projection theorem (see Theorem~\ref{martes}). Some applications of our results to non-commutative $JB^*$-algebras, like Theorem \ref{new}, could be new even in the case of $C^*$-algebras. Moreover, a ``unit-free" and ``completeness-free" Vidav--Palmer type theorem for alternative algebras (so also for associative algebras) is obtained (see Theorem \ref{completeness-freeVP} and Corollary~\ref{unit-freeVP00}).

\section{The unit-free version of the Vidav--Palmer theorem}

Given an element $a$ in an algebra $A$, we denote by $L_a^A$ (respectively $R_a^A$) the operator of left (respectively, right) multiplication by $a$ on $A$. Now let $A$ be a normed algebra over $\K$.
By a \textit{left approximate identity} in $A$ we mean a net $u_\lambda $ in $A$ such that $\lim _\lambda u_\lambda a=a$ for every $a\in A$. \textit{Right approximate identities} in $A$ are defined analogously, and (two-sided) \textit{approximate identities} in $A$ are defined as those nets in $A$ which are both left and right approximate identities in $A$. If $A\neq 0$, then we define the \textit{left operator numerical range} of an element $a\in A$ relative to $A$  by $LOV(A,a):=V(BL(A),I_A,L_a^A)$.

The key tool in this section is the following lemma, most part of which was proved in \cite{R11} under the unnecessary requirements that the normed algebra is complete and complex.

\begin{lemma} \label{4.2}
Let $A$ be a nonzero normed algebra over $\K$, and let $a_\lambda $
be an approximate identity bounded by $1$ in $A$. Consider $A''$ as a
complete normed algebra under the Arens product, let~${\bf 1}$
be a $w^*$-cluster point in~$A''$ to the net~$a_\lambda$, and set $B:=\mathbb K{\bf 1}+A\subseteq A''$. Then we
have:
\begin{enumerate}
\item[\rm (i)]
$B$ becomes a norm-unital
subalgebra of~$A''$ with unit ${\bf 1}$.
\item[\rm (ii)]
For $\alpha \in \mathbb K$ and $a\in A$ the equality $\Vert \alpha
{\bf 1}+a\Vert =\lim _{\lambda} \Vert \alpha a_\lambda +a\Vert $ holds.
\item[\rm (iii)]
For $a\in A$ the equality $LOV(A,a)=V(B,{\bf 1},a)$ holds.
\end{enumerate}
\end{lemma}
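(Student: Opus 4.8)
The plan rests on the two standard one-sided $w^*$-continuity properties of the Arens product on $A''$ (which I keep writing multiplicatively): for each fixed $G\in A''$ the map $F\mapsto FG$ is $w^*$-continuous, and for each fixed $a\in A$ the map $G\mapsto aG$ is $w^*$-continuous. Throughout I fix a subnet $a_{\lambda_\mu}\to{\bf 1}$ in the $w^*$-topology, which exists because ${\bf 1}$ is a cluster point.

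For (i), I would first check that ${\bf 1}$ is a two-sided unit for $A$. Given $b\in A$, the first continuity property gives $a_{\lambda_\mu}b\to{\bf 1}b$ while $a_{\lambda_\mu}b\to b$ in norm (left approximate identity), whence ${\bf 1}b=b$; dually the second property together with $ba_{\lambda_\mu}\to b$ gives $b{\bf 1}=b$. Applying the first property to $a_{\lambda_\mu}{\bf 1}=a_{\lambda_\mu}\to{\bf 1}$ yields ${\bf 1}{\bf 1}={\bf 1}$. Since $(\alpha{\bf 1}+a)(\beta{\bf 1}+b)=\alpha\beta{\bf 1}+(\alpha b+\beta a+ab)$ with $ab\in A$, the space $B$ is a subalgebra with unit ${\bf 1}$; finally $\|{\bf 1}\|\le 1$ because ${\bf 1}$ is a $w^*$-cluster point of the closed unit ball of $A$, and $\|{\bf 1}\|\ge\|{\bf 1}b\|/\|b\|=1$ for any nonzero $b\in A$, so ${\bf 1}$ has norm one.

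For (ii), the key observation is that \emph{every} $w^*$-cluster point $c$ of the net $a_\lambda$ is, by the argument just given, a two-sided unit for $A$; this produces a uniform upper bound. Indeed, submultiplicativity of the Arens norm together with $ca_\lambda=a_\lambda$ gives $\|\alpha a_\lambda+aa_\lambda\|=\|(\alpha c+a)a_\lambda\|\le\|\alpha c+a\|$, and since $aa_\lambda\to a$ this yields
\[
\limsup_\lambda\|\alpha a_\lambda+a\|\le\|\alpha c+a\|\qquad\text{for every $w^*$-cluster point }c.
\]
In the opposite direction, passing to a subnet $a_{\lambda_\sigma}\to c$ and invoking $w^*$-lower semicontinuity of the norm on the dual space $A''=(A')'$ gives $\|\alpha c+a\|\le\liminf_\sigma\|\alpha a_{\lambda_\sigma}+a\|$. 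Taking $c={\bf 1}$ in both bounds already forces $\limsup_\lambda\|\alpha a_\lambda+a\|=\|\alpha{\bf 1}+a\|$. The remaining point---that the \emph{full} net converges, not merely a subnet---is the step I expect to be the main obstacle. I would settle it by contradiction: if $\liminf_\lambda\|\alpha a_\lambda+a\|<\|\alpha{\bf 1}+a\|$, extract a subnet attaining this $\liminf$ and a further $w^*$-convergent subnet with limit $c'$; lower semicontinuity then gives $\|\alpha c'+a\|<\|\alpha{\bf 1}+a\|$, contradicting the uniform upper bound applied to $c=c'$. Hence the limit exists and equals $\|\alpha{\bf 1}+a\|$.

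For (iii), I would realize $B$ inside $BL(A)$ by the map $\Phi(c):=L_c^{A''}|_A$, which is linear, sends ${\bf 1}$ to $I_A$ and each $a\in A$ to $L_a^A$, and satisfies $\|\Phi(c)\|\le\|c\|$ by submultiplicativity. The reverse inequality is where (ii) is used: writing $c=\alpha{\bf 1}+a'$, one has $\Phi(c)(a_\lambda)=\alpha a_\lambda+a'a_\lambda$, so $a'a_\lambda\to a'$ and (ii) give $\|\Phi(c)(a_\lambda)\|\to\|c\|$; as $\|a_\lambda\|\le 1$ this forces $\|\Phi(c)\|\ge\|c\|$. Thus $\Phi$ is an isometry taking ${\bf 1}$ to $I_A$ and $a$ to $L_a^A$. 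Since by the Hahn--Banach theorem the numerical range $V(X,u,x)$ depends only on the norm restricted to $\mathrm{lin}\{u,x\}$ (states extend and restrict, so it is unchanged under a unital isometry and under enlarging the ambient space), I conclude
\[
V(B,{\bf 1},a)=V(\Phi(B),I_A,L_a^A)=V(BL(A),I_A,L_a^A)=LOV(A,a),
\]
which is the asserted equality.
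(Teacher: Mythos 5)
Your proposal is correct, and for the one assertion the paper actually proves---namely (iii)---it follows a genuinely different route. Note first that the paper does not prove (i) and (ii) at all: it cites \cite{R11} (cf. \cite[Lemma 3.5.46]{CR}) for them and confines its own argument to (iii); your direct proofs of (i) and (ii)---cluster points of $a_\lambda$ are two-sided units for $A$ via the one-sided $w^*$-continuity properties of the Arens product, the $\limsup$/$\liminf$ sandwich via $w^*$-lower semicontinuity of the dual norm, and the contradiction argument upgrading subnet convergence to convergence of the full net---are sound and essentially reconstruct the cited material. For (iii), the paper argues by a double inclusion: $LOV(A,a)\subseteq LOV(B,a)=V(B,{\bf 1},a)$ because $A$ is a subspace of $B$ invariant under $L_a^B$ \cite[Lemmas 2.2.24 and 2.1.10]{CR}, and conversely $LOV(B,a)\subseteq V(BL(A''),I_{A''},(L_a^A)'')=LOV(A,a)$, using that $B$ is invariant under the $w^*$-continuous bitranspose $(L_a^A)''$ with $(L_a^A)''({\bf 1})=a$, together with the invariance of operator numerical ranges under passage to the bitranspose \cite[Corollary 2.1.3]{CR}. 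You instead build the unital map $\Phi \colon B\to BL(A)$ given by $c\mapsto (L_c^{A''})_{|A}$, prove it is an isometry by evaluating at $a_\lambda$ and invoking (ii), and then transport numerical ranges along the unital isometry and the inclusion $\Phi (B)\subseteq BL(A)$ via Hahn--Banach (this is \cite[Corollary 2.1.2]{CR}). Notably, the paper's proof of (iii) never uses (ii), whereas yours hinges on it; in exchange, your argument avoids both the invariant-subspace lemma and the bitranspose numerical-range identity, is more self-contained, and yields as a byproduct the isometric left-regular representation of $B$ on $A$---in effect the same device the paper later exploits in the autonomous proof of Corollary \ref{unit-freeVPbis} and in the proof of Theorem \ref{finalfinal}. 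Both arguments are complete and correct.
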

\begin{proof}
Considering \cite{R11} (cf. \cite[Lemma 3.5.46]{CR}), it only remains to prove assertion (iii). To this end, we keep in mind assertion (i), and argue as follows.

Let $a$ be in $A$. Then, by  \cite[Lemmas 2.2.24 and 2.1.10]{CR}, we have
\[
LOV(A,a)\subseteq LOV(B,a)=V(B,{\bf 1},a).
\]
 On the other hand, since ${\bf 1}$ is a $w^*$-cluster point in~$A''$ to the net~$a_\lambda$, and $(L_a^A)'':A''\to A''$ extends $L_a^A$ and is $w^*$-continuous, and $a_\lambda $
is an approximate identity in $A$, we realize that $(L_a^A)''({\bf 1})=a$. Therefore $B$ is invariant under $(L_a^A)''$, and $((L_a^A)'')_{|B}$ (regarded as a mapping from $B$ to $B$) is equal to $L_a^B$. It follows from  \cite[Lemmas 2.1.10 and 2.2.24, and Corollary 2.1.3]{CR} that
\[
LOV(B,a)=V(BL(B),I_B,((L_a^A)'')_{|B})
\]
\[
\subseteq V(BL(A''),I_{A''},(L_a^A)'')=LOV(A,a).
\]
 Thus, by means of the double inclusion, the equality $LOV(A,a)=V(B,{\bf 1},a)$
  has been proved. Since $a$ is arbitrary in $A$, the proof of assertion (iii) and of the lemma is concluded.
 \end{proof}

\begin{definition} \label{definition1}
{\rm Let $A$ be a \textit{nonzero} normed complex algebra. We say that an element $a\in A$ is \textit{left operator hermitian} if $L_a^A$ belongs to $H(BL(A),I_A)$, and denote by $LOH(A)$ the closed real subspace of $A$ consisting of all left operator hermitian elements of $A$. If $A$ is equal to the complex linear hull of $LOH(A)$, then we say that $A$ is a \textit{left operator $V$-algebra.}}
\end{definition}

The notion of a left operator $V$-algebra, just introduced, is a weakening of other notions previously appeared in the literature (see \cite[Definition~3.8]{R99} and references therein).

\begin{lemma} \label{4.100}
The following assertions hold:
\begin{enumerate}
\item[\rm (i)]
For a norm-unital complex algebra $A$, the equality $LOH(A)=H(A,{\bf 1})$ holds, and hence $V$-algebras are precisely the norm-unital left operator $V$-algebras.
\item[\rm (ii)]
If $A$ is a left operator $V$-algebra having a right approximate identity, then the equality $A=LOH(A)\oplus iLOH(A)$ holds,
and hence $A$ is endowed with a unique conjugate-linear vector
space involution $*$  such that the set of all $*$-invariant elements of $A$ coincides with $LOH(A)$.
\end{enumerate}
\end{lemma}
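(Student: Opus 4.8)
The plan is to deduce part (i) from an identification of the left operator numerical range with an ordinary numerical range, and to deduce the direct-sum statement in part (ii) from the vertex property of units together with the right approximate identity. For part (i), let $A$ be norm-unital with unit ${\bf 1}$. The canonical map $a\mapsto L_a^A$ is a linear isometry of $A$ into $BL(A)$ carrying ${\bf 1}$ to $I_A$: indeed $\|L_a^A\|\le\|a\|$ always, while $L_a^A({\bf 1})=a$ forces $\|L_a^A\|\ge\|a\|$, and $L_{\bf 1}^A=I_A$. Such a unital isometry preserves numerical ranges in both directions --- the inclusion $V(BL(A),I_A,L_a^A)\subseteq V(A,{\bf 1},a)$ by composing each state of $(BL(A),I_A)$ with the embedding, and the reverse inclusion by assigning to each $f\in D(A,{\bf 1})$ the functional $T\mapsto f(T{\bf 1})$, which lies in $D(BL(A),I_A)$ and evaluates to $f(a)$ at $L_a^A$ --- so that $LOV(A,a)=V(A,{\bf 1},a)$ for all $a\in A$. (Alternatively this is Lemma \ref{4.2}(iii) applied to the constant approximate identity $a_\lambda={\bf 1}$, for which $B=A$.) Since membership in $LOH(A)$, resp.\ in $H(A,{\bf 1})$, means exactly that the corresponding numerical range lies in $\R$, this yields $LOH(A)=H(A,{\bf 1})$; the equivalence of $V$-algebras with norm-unital left operator $V$-algebras is then an immediate comparison of the two defining conditions ($A$ being the complex linear hull of $LOH(A)$, resp.\ of $H(A,{\bf 1})$).

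For part (ii), note first that since $LOH(A)$ is a real subspace, the hypothesis that $A$ is its complex linear hull already gives $A=LOH(A)+iLOH(A)$; the whole point is therefore to show this sum is direct, i.e.\ $LOH(A)\cap iLOH(A)=\{0\}$. I would take $x$ in the intersection, so that both $x$ and $-ix$ lie in $LOH(A)$. Using that $L_{-ix}^A=-iL_x^A$ and that the numerical range is homogeneous, the first membership gives $LOV(A,x)\subseteq\R$ and the second gives $-i\,LOV(A,x)=LOV(A,-ix)\subseteq\R$, whence $LOV(A,x)\subseteq\R\cap i\R=\{0\}$. Now $LOV(A,x)=V(BL(A),I_A,L_x^A)$, and $BL(A)$ is an \emph{associative} norm-unital algebra, so by the Bohnenblust--Karlin theorem its unit $I_A$ is a vertex of $\mathbb{B}_{BL(A)}$; the vanishing of this numerical range therefore forces $L_x^A=0$, that is, $xb=0$ for every $b\in A$.

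This is exactly where the right approximate identity is used, and it is the crux of the argument. Given such a net $u_\lambda$ (so $\lim_\lambda au_\lambda=a$ for all $a\in A$), the identity $xu_\lambda=0$ for every $\lambda$ yields $x=\lim_\lambda xu_\lambda=0$. Hence the sum is direct and $A=LOH(A)\oplus iLOH(A)$. The final clause is then routine and entirely parallel to the derivation of \eqref{Vaxion}: the real-linear decomposition determines the conjugate-linear involution $*$ by $(h+ik)^*:=h-ik$ for $h,k\in LOH(A)$, this $*$ is visibly a conjugate-linear vector space involution whose fixed-point set is $LOH(A)$, and any involution with that fixed-point set must coincide with it on each summand, giving uniqueness. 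The main obstacle is precisely the passage from $L_x^A=0$ back to $x=0$: for a general left operator $V$-algebra (for instance one with zero multiplication) this step fails, so the right approximate identity is genuinely indispensable here.
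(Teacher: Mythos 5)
Your proof is correct and takes essentially the same route as the paper: for (i) you simply unpack, via the unital isometric embedding $a\mapsto L_a^A$ and the state transfers $F\mapsto F\circ L$ and $f\mapsto (T\mapsto f(T{\bf 1}))$, the identity $LOV(A,a)=V(A,{\bf 1},a)$ that the paper obtains by citing \cite[Lemma 2.1.10]{CR}. For (ii) your argument is the paper's verbatim in expanded form --- an element $x$ of $LOH(A)\cap iLOH(A)$ has $L_x^A\in H(BL(A),I_A)\cap iH(BL(A),I_A)$, which vanishes by the Bohnenblust--Karlin vertex property (the content of the paper's citation of \cite[Corollary 2.1.13]{CR}), and then $x=\lim_\lambda xu_\lambda=0$ by the right approximate identity.
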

\begin{proof}
Assertion (i) follows from  \cite[Lemma 2.1.10]{CR}.

Now let $A$ be a nonzero normed complex algebra having a right approximate identity (say $a_\lambda $), and let $a$ be in $LOH(A)\cap iLOH(A)$. Then $L_a^A$ belongs to $H(BL(A),I_A)\cap iH(BL(A),I_A)$, and hence, by \cite[Corollary 2.1.13]{CR}, $L_a^A=0$. Therefore $a=\lim _\lambda aa_\lambda =\lim _\lambda L_a^A(a_\lambda )=0$. Thus $LOH(A)\cap iLOH(A)=0$, and (ii) follows.
 \end{proof}

\begin{definition} \label{definition2}
{\rm Let $A$ be a left operator $V$-algebra having a right approximate identity. Then the involution $*$ given by assertion (ii) in Lemma \ref{4.100} will be called \textit{the natural involution} of $A$.}
\end{definition}

Now the unit-free version of Theorem \ref{VP} reads as follows.

\begin{theorem} \label{unit-freeVP}
Nonzero $C^*$-algebras are precisely the complete associative left operator $V$-algebras having an approximate identity bounded by $1$, endowed with their natural involutions.
\end{theorem}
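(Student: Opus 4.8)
The plan is to prove the two inclusions separately, in both cases transferring numerical-range data between $A$ and the norm-unital algebra $B:=\mathbb{C}{\bf 1}+A\subseteq A''$ furnished by Lemma~\ref{4.2}, where ${\bf 1}$ is a $w^*$-cluster point of the approximate identity. The device that makes everything run is the identity
\[
LOH(A)=A\cap H(B,{\bf 1}),
\]
valid in either direction: for $a\in A$ one has $a\in LOH(A)$ iff $LOV(A,a)\subseteq\mathbb{R}$, and by Lemma~\ref{4.2}(iii) this says $V(B,{\bf 1},a)\subseteq\mathbb{R}$, i.e.\ $a\in H(B,{\bf 1})$.

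For the forward inclusion, let $A$ be a nonzero $C^*$-algebra. It is complete and associative by definition and carries a positive increasing approximate identity bounded by $1$, so the only real content is that $A$ is a left operator $V$-algebra. Here $A''$ with the Arens product is a von Neumann algebra, and choosing ${\bf 1}$ to be the $w^*$-limit of the (self-adjoint) approximate identity makes $B$ a unital $C^*$-subalgebra of $A''$ (the unitization of $A$, or $A$ itself when $A$ is already unital). For such a $B$ one has $H(B,{\bf 1})=B_{sa}$, so the displayed identity gives $LOH(A)=A\cap B_{sa}=A_{sa}$; since $A=A_{sa}\oplus iA_{sa}$, the algebra $A$ is the complex linear hull of $LOH(A)$. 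Finally the natural involution of $A$ as a left operator $V$-algebra fixes exactly $LOH(A)=A_{sa}$, so by the uniqueness in Lemma~\ref{4.100}(ii) it coincides with the $C^*$-involution.

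For the converse, let $A$ be a complete associative left operator $V$-algebra with an approximate identity bounded by $1$. By Lemma~\ref{4.2}(i) the algebra $B$ is norm-unital; it is associative as a subalgebra of the associative $A''$, and complete because it is obtained from the closed subspace $A\subseteq A''$ by adjoining at most one dimension. The crux is to show that $B$ is a $V$-algebra. Combining $A=LOH(A)\oplus iLOH(A)$ from Lemma~\ref{4.100}(ii) with the displayed identity and with ${\bf 1}\in H(B,{\bf 1})$ yields
\[
B=\mathbb{C}{\bf 1}+A\subseteq H(B,{\bf 1})+iH(B,{\bf 1})\subseteq B,
\]
so $B$ equals the complex linear hull of $H(B,{\bf 1})$. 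Thus $B$ is a complete associative $V$-algebra, and Theorem~\ref{VP} makes $B$ a unital $C^*$-algebra under its natural involution, which I denote $*_B$.

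It remains to descend from $B$ to $A$. Writing an arbitrary $a\in A$ as $h+ik$ with $h,k\in LOH(A)=A\cap H(B,{\bf 1})\subseteq A$ shows $a^{*_B}=h-ik\in A$, so $A$ is a norm-closed $*$-subalgebra of the unital $C^*$-algebra $B$ and is therefore itself a nonzero $C^*$-algebra. The fixed-point set of $*_B$ restricted to $A$ is $A\cap H(B,{\bf 1})=LOH(A)$, so by the uniqueness in Lemma~\ref{4.100}(ii) this restriction is precisely the natural involution of $A$, as required. I expect the real weight of the argument to fall on verifying that $B$ is a $V$-algebra: this is where Lemma~\ref{4.2}(iii) is indispensable, since it converts the merely ``left-operator'' hermitian hypothesis on $A$ into honest hermitian structure inside the unital algebra $B$, after which the classical Vidav--Palmer theorem supplies the remaining force.
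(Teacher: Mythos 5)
Your proposal is correct and takes essentially the same route as the paper: both pass to the norm-unital algebra $B=\C {\bf 1}+A$ of Lemma~\ref{4.2}, use part (iii) of that lemma to see that $B$ is a complete associative $V$-algebra, apply the classical Vidav--Palmer Theorem~\ref{VP} to $B$, and descend to $A$ through the $*_B$-invariance of $A$ and the uniqueness in Lemma~\ref{4.100}(ii). The differences are organizational only: the paper splits the converse into the cases ${\bf 1}\in A$ and ${\bf 1}\notin A$ and quotes \cite[Lemma 3.6.24]{CR} for the easy direction, whereas you argue uniformly via the identity $LOH(A)=A\cap H(B,{\bf 1})$ and verify the easy direction directly in the enveloping bidual.
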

\begin{proof}
Nonzero $C^*$-algebras have approximate identities bounded by $1$ \linebreak \cite[Lemma 39.14]{BD}. Moreover, by \cite[Lemma 3.6.24]{CR}, they are complete associative left operator $V$-algebras  whose natural involutions coincide with their involutions as $C^*$-algebras. This is \textit{the easy part} of the theorem.

Suppose that $A$ is a complete associative left operator $V$-algebra and has an approximate identity $a_\lambda $ bounded by $1$. Let ${\bf 1}$ and $B$ be as in Lemma~\ref{4.2}. Suppose that ${\bf 1}\in A$. Then, by Lemma \ref{4.2}(i), $A=B$ is a norm-unital complete associative left operator $V$-algebra. Therefore, by Lemma \ref{4.100}(i) and Theorem \ref{VP}, $A$ is a $C^*$-algebra. Finally suppose that ${\bf 1}\notin A$. Then \linebreak $B=\C {\bf 1}\oplus A$, and it follows from Lemma \ref{4.2}(iii) that $B$ is a complete associative $V$-algebra whose natural involution (as a $V$-algebra) is the mapping $\alpha {\bf 1}+a\to \overline{\alpha}{\bf 1}+a^*$, where $*$ denotes the natural involution of $A$ (as a left operator $V$-algebra having a right approximate identity). Therefore, by Theorem \ref{VP}, $B$ is a $C^*$-algebra for the involution pointed out above. Since $A$ is invariant under such an involution, $A$ is a $C^*$-algebra under its natural involution.
\end{proof}

Noticing that non-commutative $JB^*$-algebras have approximate identities bounded by $1$ \cite{Vt} (cf. \cite[Proposition 3.5.23]{CR}), we can argue as in the above proof, with Theorem \ref{non-associativeVP} instead of Theorem \ref{VP}, to obtain the following.

\begin{theorem} \label{unit-freenon-associativeVP}
Nonzero non-commutative $JB^*$-algebras are precisely the complete {\rm (possibly non-associative)} left operator $V$-algebras  having an approximate identity bounded by $1$, endowed with their natural involutions.
\end{theorem}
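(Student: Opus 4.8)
The plan is to mirror the proof of Theorem \ref{unit-freeVP} almost verbatim, replacing every invocation of the associative Vidav--Palmer theorem (Theorem \ref{VP}) by its non-associative counterpart (Theorem \ref{non-associativeVP}), and to verify that none of the intermediate steps secretly relied on associativity. First I would record the \emph{easy part}: a nonzero non-commutative $JB^*$-algebra $A$ has an approximate identity bounded by $1$ by \cite{Vt} (cf. \cite[Proposition 3.5.23]{CR}); it is complete by definition, and one must check that it is a left operator $V$-algebra whose natural involution coincides with its $*$-operation. This last point is the analogue of \cite[Lemma 3.6.24]{CR} used in the associative case, and I would either cite the non-associative version of that lemma or derive it from the fact that in any non-commutative $JB^*$-algebra the hermitian elements relative to a unit (or to the bidual unit) are exactly the $*$-symmetric ones.

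For the converse, suppose $A$ is a complete (possibly non-associative) left operator $V$-algebra with an approximate identity $a_\lambda$ bounded by $1$. I would apply Lemma \ref{4.2} to produce the norm-unital algebra $B=\K{\bf 1}+A\subseteq A''$ and then split into the two cases exactly as before. If ${\bf 1}\in A$, then $A=B$ is a norm-unital complete left operator $V$-algebra, so by Lemma \ref{4.100}(i) it is a (unital) $V$-algebra, and Theorem \ref{non-associativeVP} identifies it as a unital non-commutative $JB^*$-algebra with the correct involution. If ${\bf 1}\notin A$, then $B=\C{\bf 1}\oplus A$, and Lemma \ref{4.2}(iii) (which transfers left operator numerical ranges to numerical ranges relative to ${\bf 1}$) shows $B$ is a complete $V$-algebra whose natural involution restricts on $A$ to the natural involution $*$ of $A$. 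Theorem \ref{non-associativeVP} then makes $B$ a unital non-commutative $JB^*$-algebra, and since $A$ is a $*$-invariant (and clearly multiplicatively closed) subalgebra of $B$, it inherits the structure: $A$ is a non-commutative $JB^*$-algebra under $*$.

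The main obstacle I anticipate is \emph{not} in the numerical-range bookkeeping---that is handled uniformly by Lemma \ref{4.2} and Lemma \ref{4.100}, both of which are already stated in the non-associative generality---but rather in confirming that the closure property ``$A$ is a non-commutative $JB^*$-subalgebra of $B$'' goes through without associativity. Concretely, in the ${\bf 1}\notin A$ case I must know that the non-commutative Jordan identities and the $JB^*$ norm condition $\Vert U_a(a^*)\Vert=\Vert a\Vert^3$, which Theorem \ref{non-associativeVP} guarantees for $B$, descend to the subalgebra $A$. Closure of $A$ under the Jordan product and under $*$ is immediate since $A$ is a $*$-invariant subalgebra; the norm identity is automatic because $U_a$ is computed inside the subalgebra and the norm on $A$ agrees with that on $B$. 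The one subtlety worth a sentence is that the approximate identity's boundedness by $1$ is exactly what Lemma \ref{4.2} requires, so the hypothesis is used precisely there and nowhere else. Hence the argument reduces to the remark, already made in the text preceding the statement, that ``we can argue as in the above proof, with Theorem \ref{non-associativeVP} instead of Theorem \ref{VP}.''
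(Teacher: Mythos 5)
Your proposal is correct and coincides with the paper's own argument: the paper proves Theorem \ref{unit-freenon-associativeVP} precisely by noting that non-commutative $JB^*$-algebras have approximate identities bounded by $1$ (\cite{Vt}, cf.\ \cite[Proposition 3.5.23]{CR}) and then repeating the proof of Theorem \ref{unit-freeVP} with Theorem \ref{non-associativeVP} in place of Theorem \ref{VP}, including the same two-case split according to whether ${\bf 1}\in A$, via Lemmas \ref{4.2} and \ref{4.100}(i). Your additional verifications --- that the $JB^*$ norm identity and the non-commutative Jordan identities descend from $B=\C{\bf 1}\oplus A$ to the closed $*$-invariant subalgebra $A$, and that the natural involution of $B$ restricts to that of $A$ --- are exactly the points the paper leaves implicit in its ``argue as in the above proof'' remark, and they are handled correctly.
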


Note that, thanks to Lemma \ref{4.100}(i), Theorem \ref{unit-freeVP} (respectively, Theorem \ref{unit-freenon-associativeVP}) contains its unital forerunner, namely Theorem \ref{VP} (respectively, Theorem \ref{non-associativeVP}).

As we already know, alternative $C^*$-algebras are precisely those
non-commutative $JB^*$-algebras which are alternative. Therefore, as consequence of Theorem \ref{unit-freenon-associativeVP} we obtain the following generalization of Theorem \ref{unit-freeVP}.

\begin{corollary} \label{alternativeunit-freeVP}
Nonzero alternative $C^*$-algebras are precisely the complete alternative left operator $V$-algebras having an approximate identity bounded by $1$, endowed with their natural involutions.
\end{corollary}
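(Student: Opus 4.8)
The plan is to deduce the corollary directly from Theorem \ref{unit-freenon-associativeVP}, combined with the identification recalled just before the corollary and taken from \cite[Fact 3.3.2]{CR}, namely that \emph{alternative $C^*$-algebras are precisely those non-commutative $JB^*$-algebras which are alternative}. Thus no new analytic work is needed; what must be verified is only that the word ``alternative'' can be inserted coherently on both sides of the equivalence supplied by Theorem \ref{unit-freenon-associativeVP}.

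First I would treat the easy part. Let $A$ be a nonzero alternative $C^*$-algebra. By the quoted identification, $A$ is a non-commutative $JB^*$-algebra which happens to be alternative, so the easy implication of Theorem \ref{unit-freenon-associativeVP} gives that $A$ is a complete left operator $V$-algebra having an approximate identity bounded by $1$, whose natural involution coincides with its involution as a non-commutative $JB^*$-algebra, and hence with its involution as an alternative $C^*$-algebra. Since $A$ is alternative by hypothesis, it is in particular a complete alternative left operator $V$-algebra of the required type.

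For the converse, suppose that $A$ is a complete alternative left operator $V$-algebra having an approximate identity bounded by $1$, endowed with its natural involution. Disregarding the alternativity for a moment, Theorem \ref{unit-freenon-associativeVP} tells us that $A$ is a (possibly non-associative) non-commutative $JB^*$-algebra, and that the natural involution of $A$ as a left operator $V$-algebra is precisely its involution as a non-commutative $JB^*$-algebra. Now reinstate the hypothesis that $A$ is alternative: by \cite[Fact 3.3.2]{CR}, a non-commutative $JB^*$-algebra which is alternative is exactly an alternative $C^*$-algebra. Hence $A$ is a nonzero alternative $C^*$-algebra whose involution as such agrees with its natural involution, as desired.

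The only point deserving attention---and it is the sole, mild obstacle---is the compatibility of the two occurrences of ``alternative.'' This causes no difficulty: the left and right alternative laws $a^2b=a(ab)$ and $ba^2=(ba)a$ are identities in the product of $A$ alone, and Theorem \ref{unit-freenon-associativeVP} identifies the two classes of algebras without altering their products, so being alternative on the left operator $V$-algebra side is literally the same condition as being alternative on the non-commutative $JB^*$-algebra side. Consequently, restricting the equivalence of Theorem \ref{unit-freenon-associativeVP} to alternative algebras and combining it with \cite[Fact 3.3.2]{CR} yields the corollary.
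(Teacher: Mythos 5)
Your proposal is correct and is essentially the paper's own proof: the author derives the corollary as an immediate consequence of Theorem \ref{unit-freenon-associativeVP} together with the identification from \cite[Fact 3.3.2]{CR} that alternative $C^*$-algebras are precisely the alternative non-commutative $JB^*$-algebras, exactly as you do. Your added observation that alternativity depends only on the product, which neither identification alters, is a sound (if routine) check that the paper leaves implicit.
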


\begin{remark} \label{nota}
{\rm Let $A$ be a nonzero normed complex algebra and, parodying Definition \ref{definition1}, let us say that an element $a\in A$ is \textit{right operator hermitian} if $R_a^A$ belongs to $H(BL(A),I_A)$, let us denote by $ROH(A)$ the closed real subspace of $A$ consisting of all right operator hermitian elements of $A$, and let us say that $A$ is a \textit{right operator $V$-algebra} if $A$ is equal to the complex linear hull of $ROH(A)$. Then, arguing as above, we can prove the appropriate variants of Lemmas \ref{4.2}(iii) and \ref{4.100}.
Analogously, parodying Definition~\ref{definition2}, we can define the \textit{natural involution} of a right operator $V$-algebra having a left approximate identity, and can prove the appropriate variants of Theorems \ref{unit-freeVP} and \ref{unit-freenon-associativeVP}, and of Corollary \ref{alternativeunit-freeVP}. As a result, we obtain that \textit{nonzero $C^*$-algebras $($respectively, non-commutative $JB^*$-algebras, or alternative $C^*$-algebras$)$ are precisely the complete associative $($respectively, possibly non-associative, or alternative$)$ right operator $V$-algebras having an approximate identity bounded by $1$, endowed with their natural involutions.} As a byproduct, combining this with Theorem \ref{unit-freenon-associativeVP}, we obtain that \textit{complete left operator $V$-algebras having an approximate identity bounded by $1$ coincide with complete right operator $V$-algebras having an approximate identity bounded by $1$, and the natural involution of such an algebra as a left operator algebra having a right approximate identity coincides with the natural involution  as a right operator algebra having a left approximate identity.} Thus the ``asymmetry" of our results is only apparent.}
\end{remark}

Despite the above remark, an explicit ``symmetric" version of the unit-free Vidav--Palmer theorem can be proved. To establish it, we recall that, for any  real or complex algebra $A$, $A^{\rm sym}$ denotes the algebra consisting of the vector space of $A$ and the product $a\bullet b:=\frac{1}{2}(ab+ba)$, and that, if $A$ is a normed algebra, then so is $A^{\rm sym}$ under the norm of $A$. With these conventions, the announced symmetric version of the unit-free Vidav--Palmer theorem reads as follows.

\begin{theorem}
For a complete normed complex algebra $A$, the following conditions are equivalent:
\begin{enumerate}
\item[\rm (i)] $A$ is a nonzero non-commutative $JB^*$-algebra.
\item[\rm (ii)] $A$ has an approximate identity bounded by $1$, and $A^{\rm sym}$ is a left \linebreak $($$=$ right by commutativity$)$ operator $V$-algebra.
\item[\rm (iii)] $A^{\rm sym}$ is a left $($$=$ right$)$ operator $V$-algebra and has an approximate identity  bounded by $1$.
\end{enumerate}
\end{theorem}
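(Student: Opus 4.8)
The plan is to prove the cycle (i) $\Rightarrow$ (ii) $\Rightarrow$ (iii) $\Rightarrow$ (i), with almost all of the difficulty concentrated in the last implication. Throughout, write $a\bullet b=\frac{1}{2}(ab+ba)$ for the product of $A^{\rm sym}$, so that $L_a^{A^{\rm sym}}=\frac{1}{2}(L_a^A+R_a^A)$, and note that $A$ and $A^{\rm sym}$ share the same underlying normed space; in particular $BL(A)=BL(A^{\rm sym})$ and $A\neq 0$ if and only if $A^{\rm sym}\neq 0$. The unifying observation, which is what makes the whole argument run, is that for a norm-unital normed complex algebra $C$ the set $D(C,{\bf 1})$ and the numerical range $V(C,{\bf 1},x)$ depend only on the norm of $C$ and on the element ${\bf 1}$; consequently $H(C,{\bf 1})=H(C^{\rm sym},{\bf 1})$, so $C$ is a $V$-algebra precisely when $C^{\rm sym}$ is, and then the two share the same natural involution.

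For (i) $\Rightarrow$ (ii): a nonzero non-commutative $JB^*$-algebra $A$ has an approximate identity bounded by $1$ by \cite[Proposition 3.5.23]{CR}, and its Jordan part $A^{\rm sym}$ is a $JB^*$-algebra (the operator in the defining axiom $\Vert U_a(a^*)\Vert=\Vert a\Vert^3$ is, by flexibility of $A$, exactly the Jordan $U$-operator of $A^{\rm sym}$; cf.\ \cite{CR}). Since a $JB^*$-algebra is in particular a nonzero non-commutative $JB^*$-algebra, Theorem~\ref{unit-freenon-associativeVP} applied to $A^{\rm sym}$ exhibits it as a complete left operator $V$-algebra, which is (ii). For (ii) $\Rightarrow$ (iii): if $u_\lambda$ is a two-sided approximate identity of $A$ bounded by $1$, then $u_\lambda\bullet a=\frac{1}{2}(u_\lambda a+au_\lambda)\to a$ for every $a$, so $u_\lambda$ is an approximate identity of $A^{\rm sym}$ bounded by $1$; together with the hypothesis that $A^{\rm sym}$ is a left operator $V$-algebra this gives (iii).

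The substance lies in (iii) $\Rightarrow$ (i). Applying Theorem~\ref{unit-freenon-associativeVP} to the commutative algebra $A^{\rm sym}$, the hypotheses say exactly that $A^{\rm sym}$ is a complete left operator $V$-algebra with an approximate identity bounded by $1$, hence a nonzero non-commutative $JB^*$-algebra; being commutative, it is a $JB^*$-algebra, with a natural involution $*$. The task is to promote this to the assertion that $A$, with its \emph{original} product, is a non-commutative $JB^*$-algebra. I would reduce to the norm-unital situation: produce a norm-unital complete normed complex algebra $B$ in which $A$ sits as a norm-closed subalgebra and with $B^{\rm sym}=\mathbb{C}{\bf 1}+A^{\rm sym}$ the (unital) $JB^*$-unitization of $A^{\rm sym}$. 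Granting such a $B$, the unifying observation gives $H(B,{\bf 1})=H(B^{\rm sym},{\bf 1})$; as the unital $JB^*$-algebra $B^{\rm sym}$ is a $V$-algebra we have $B^{\rm sym}=H(B^{\rm sym},{\bf 1})\oplus iH(B^{\rm sym},{\bf 1})$ by \eqref{Vaxion}, whence $B=H(B,{\bf 1})\oplus iH(B,{\bf 1})$ and $B$ is a complete $V$-algebra. Theorem~\ref{non-associativeVP} then makes $B$ a unital non-commutative $JB^*$-algebra whose natural involution, being determined by $H(B,{\bf 1})=H(B^{\rm sym},{\bf 1})$, coincides with $*$ and hence leaves $A$ invariant. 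Since a norm-closed $*$-subalgebra of a non-commutative $JB^*$-algebra is again one, $A$ is a non-commutative $JB^*$-algebra; it is nonzero because $A^{\rm sym}\neq 0$.

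The main obstacle is precisely the construction of $B$. The approximate identity supplied by (iii) is only an $A^{\rm sym}$-approximate identity, so the device used in the proof of Theorem~\ref{unit-freeVP} --- adjoining to $A$ a $w^*$-cluster point ${\bf 1}\in A''$ of the net and invoking Lemma~\ref{4.2} --- does not apply off the shelf, since Lemma~\ref{4.2} requires a genuine (two-sided) approximate identity of $A$. The crux is therefore to show that a Jordan approximate identity of $A^{\rm sym}$ is automatically a two-sided approximate identity of $A$; once this is in hand, Lemma~\ref{4.2}(i) yields a two-sided Arens unit ${\bf 1}$, $B:=\mathbb{C}{\bf 1}+A$ is norm-unital, and $B^{\rm sym}=\mathbb{C}{\bf 1}+A^{\rm sym}$ is the $JB^*$-unitization, as required. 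I expect this two-sided upgrade to hinge on the submultiplicativity of the norm of $A$ (a feature absent from a purely Jordan-algebraic datum: a generic antisymmetric perturbation of a $JB^*$-product already fails to be contractive on small examples), combined with the positive increasing approximate identities available in the $JB^*$-algebra $A^{\rm sym}$. Alternatively, the single implication ``$A^{\rm sym}$ a $JB^*$-algebra $\Rightarrow$ $A$ a non-commutative $JB^*$-algebra'', for complete normed complex $A$, may be quotable directly from \cite{CR}, which would let me bypass the explicit bidual construction altogether.
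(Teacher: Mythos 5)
Your implications (i)$\Rightarrow$(ii) and (ii)$\Rightarrow$(iii), and the first half of (iii)$\Rightarrow$(i) --- applying Theorem \ref{unit-freenon-associativeVP} to the commutative algebra $A^{\rm sym}$ to conclude that it is a $JB^*$-algebra --- coincide with the paper's proof. The divergence, and the gap, lies in how you pass from ``$A^{\rm sym}$ is a $JB^*$-algebra'' to ``$A$ is a non-commutative $JB^*$-algebra''. Your primary plan (adjoin a unit via Lemma \ref{4.2}, use that $H(B,{\bf 1})=H(B^{\rm sym},{\bf 1})$ since numerical ranges depend only on the normed space and the distinguished element, then invoke Theorem \ref{non-associativeVP} for $B$) founders exactly where you flag it: Lemma \ref{4.2} requires a genuine two-sided approximate identity of $A$ itself, whereas condition (iii) supplies only an approximate identity for the Jordan product, and you never prove the upgrade. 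That upgrade is not a routine consequence of submultiplicativity: at that stage of the argument nothing ties the antisymmetric part of the product of $A$ to the $JB^*$-structure of $A^{\rm sym}$, and deducing $\|au_\lambda -a\|\to 0$ from $\|a\bullet u_\lambda -a\|\to 0$ is precisely the kind of compatibility statement whose standard proofs (even in the $C^*$-case) rest on positivity and order arguments that are available only \emph{after} one knows (i). So, as written, your primary route is incomplete, and plausibly circular.

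However, the fallback you name in your last sentence is exactly the paper's proof: the implication ``$A$ complete normed complex with $A^{\rm sym}$ a $JB^*$-algebra $\Rightarrow$ $A$ a non-commutative $JB^*$-algebra'' is a known theorem of Rodr\'iguez \cite{R91} (cf. \cite[Definition 3.5.29 and Proposition 3.5.31]{CR}, where such an $A$ is called $JB^*$-admissible), and the paper quotes precisely this in the step (iii)$\Rightarrow$(i), thereby bypassing any unitization or bidual construction. With that citation substituted for your construction of $B$, your argument is correct and essentially identical to the paper's; without it, the crux remains unproved.
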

\begin{proof}
(i)$\Rightarrow $(ii) Suppose that (i) holds. Then, by \cite[Proposition 3.5.23]{CR}, $A$ has an approximate identity bounded by $1$. On the other hand, by \cite[Fact~3.3.4]{CR}, $A^{\rm sym}$ is a $JB^*$-algebra, and hence, by the easy part of Theorem \ref{unit-freenon-associativeVP}, $A^{\rm sym}$ is a left operator $V$-algebra.

(ii)$\Rightarrow $(iii) Clearly, every approximate identity bounded by $1$ in $A$ is an approximate identity bounded by $1$ in $A^{\rm sym}$.

(iii)$\Rightarrow $(i) Suppose that (iii) holds. Then, by Theorem \ref{unit-freenon-associativeVP}, $A^{\rm sym}$ is a $JB^*$-algebra. Therefore, by \cite{R91} (cf. \cite[Definition 3.5.29 and Proposition 3.5.31]{CR}), $A$ is a non-commutative $JB^*$-algebra.
\end{proof}

Since $C^*$-algebras (respectively, alternative $C^*$-algebras)  are precisely  those non-commutative $JB^*$-algebras which are associative (respectively, alternative), the above theorem provides us with a symmetric unit-free version of the Vidav--Palmer theorem for $C^*$-algebras (respectively, alternative $C^*$-algebras).

\section{The unit-free version of the Blecher--Ruan--Sin\-clair non-asso\-ciative characterization of unital $C^*$-al\-gebras}

The key tool in this section is the following.

\begin{lemma} \label{4.2bis}
For $i=1,2$, let $A_i$ be a nonzero nor\-med algebra over $\K $, and let $a_{i,\lambda }$
be an approximate identity bounded by $1$ in $A_i$. Let $\phi :A_1\to A_2$ be a contractive linear mapping such that $\phi (a_{1,\lambda })=a_{2,\lambda }$ for every $\lambda $. Then for every $x\in A_1$ we have
$LOV(A_2,\phi (x))\subseteq LOV(A_1,x)$.
\end{lemma}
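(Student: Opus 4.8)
The plan is to reduce the asserted inclusion of left operator numerical ranges to an inclusion of ordinary numerical ranges in the norm-unital unitizations supplied by Lemma~\ref{4.2}, and then to realize that inclusion by pulling states back along a contractive unital extension of $\phi$.

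First I would apply Lemma~\ref{4.2} separately to $A_1$ and $A_2$: choosing a $w^*$-cluster point ${\bf 1}_i\in A_i''$ of the net $a_{i,\lambda}$ and setting $B_i:=\K{\bf 1}_i+A_i$, I obtain by Lemma~\ref{4.2}(i) norm-unital algebras $B_i$ with unit ${\bf 1}_i$ for which, by Lemma~\ref{4.2}(iii),
\[
LOV(A_1,x)=V(B_1,{\bf 1}_1,x),\qquad LOV(A_2,\phi(x))=V(B_2,{\bf 1}_2,\phi(x)),
\]
and for which the norm formula $\Vert\alpha{\bf 1}_i+a\Vert=\lim_\lambda\Vert\alpha a_{i,\lambda}+a\Vert$ of Lemma~\ref{4.2}(ii) is available (the limits along the common index set $\lambda$ being guaranteed to exist). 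It therefore suffices to prove $V(B_2,{\bf 1}_2,\phi(x))\subseteq V(B_1,{\bf 1}_1,x)$.

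Next I would construct a contractive unital linear map $\Phi\colon B_1\to B_2$ extending $\phi$, via the assignment $\alpha{\bf 1}_1+a\mapsto\alpha{\bf 1}_2+\phi(a)$ (which indeed lands in $\K{\bf 1}_2+A_2=B_2$). The one delicate point, which I expect to be the main obstacle, is well-definedness: when ${\bf 1}_1\in A_1$ the decomposition $\alpha{\bf 1}_1+a$ is not unique, so the formula must be shown independent of the representative. I would settle this uniformly through the norm formula together with $\phi(a_{1,\lambda})=a_{2,\lambda}$ and the contractivity of $\phi$. Indeed, if $\alpha{\bf 1}_1+a=0$ in $A_1''$ then $\lim_\lambda\Vert\alpha a_{1,\lambda}+a\Vert=0$, whence
\[
\Vert\alpha{\bf 1}_2+\phi(a)\Vert=\lim_\lambda\Vert\alpha a_{2,\lambda}+\phi(a)\Vert=\lim_\lambda\Vert\phi(\alpha a_{1,\lambda}+a)\Vert\leq\lim_\lambda\Vert\alpha a_{1,\lambda}+a\Vert=0 .
\]
The very same chain, with the two outer zeros replaced by $\Vert\alpha{\bf 1}_2+\phi(a)\Vert$ and $\Vert\alpha{\bf 1}_1+a\Vert$ respectively, shows at once that $\Phi$ is well defined and contractive, while $\Phi({\bf 1}_1)={\bf 1}_2$ and $\Phi|_{A_1}=\phi$ hold by construction.

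Finally I would pull back states. For any $g\in D(B_2,{\bf 1}_2)$ the functional $g\circ\Phi$ lies in $\mathbb{B}_{B_1'}$, since $\Phi$ is contractive and $\Vert g\Vert\leq1$, and it satisfies $(g\circ\Phi)({\bf 1}_1)=g({\bf 1}_2)=1$; hence $g\circ\Phi\in D(B_1,{\bf 1}_1)$. Because $\Phi$ extends $\phi$, for every $x\in A_1$ we get $g(\phi(x))=(g\circ\Phi)(x)\in V(B_1,{\bf 1}_1,x)$. Letting $g$ range over $D(B_2,{\bf 1}_2)$ yields $V(B_2,{\bf 1}_2,\phi(x))\subseteq V(B_1,{\bf 1}_1,x)$, which by the first paragraph is exactly the desired inclusion $LOV(A_2,\phi(x))\subseteq LOV(A_1,x)$.
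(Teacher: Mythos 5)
Your proof is correct and takes essentially the same route as the paper: the identical chain $\Vert\alpha{\bf 1}_2+\phi(x)\Vert=\lim_\lambda\Vert\phi(\alpha a_{1,\lambda}+x)\Vert\leq\lim_\lambda\Vert\alpha a_{1,\lambda}+x\Vert=\Vert\alpha{\bf 1}_1+x\Vert$ from Lemma~\ref{4.2}(ii) produces the well-defined contractive unital map $B_1\to B_2$ extending $\phi$, and the conclusion then follows via Lemma~\ref{4.2}(iii). The only cosmetic difference is that where the paper invokes \cite[Corollary 2.1.2]{CR} for the resulting inclusion $V(B_2,{\bf 1}_2,\phi(x))\subseteq V(B_1,{\bf 1}_1,x)$, you prove it inline by pulling states back along the extension, which is precisely the standard proof of that cited corollary.
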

\begin{proof}
For $i=1,2$ let ${\bf 1}_i$ and $B_i$ the element of $A_i''$ and the subalgebra of $A_i''$, respectively, given by Lemma \ref{4.2}. Then, by assertion (ii) in that lemma, for $(\alpha ,x)\in \K \times A_1$ we have
\[
\mbox{$\|\alpha {\bf 1}_2+\phi (x)\|=\lim _{\lambda} \|\alpha a_{2,\lambda}+\phi (x)\|=\lim _{\lambda} \|\phi (\alpha a_{1,\lambda}+x)\|$}
\]
\[
\mbox{$\leq \lim _{\lambda} \|\alpha a_{1,\lambda}+x\|=\|\alpha {\bf 1}_1+x\|$.}
\]
The inequality $\|\alpha {\bf 1}_2+\phi (x)\|\leq \|\alpha {\bf 1}_1+x\|$ just proved for every $(\alpha ,x)\in \K \times A_1$ implies that $\alpha {\bf 1}_1+x\to \alpha {\bf 1}_2+\phi (x)$ becomes a contractive linear well-defined mapping from $B_1$ to $B_2$ taking ${\bf 1}_1$ to ${\bf 1}_2$.  Therefore, by \cite[Corollary 2.1.2]{CR}, we have $V(B_2,{\bf 1}_2,\phi (x))\subseteq V(B_1,{\bf 1}_1,x)$ for every $x\in A_1$, and the proof is concluded by appling Lemma \ref{4.2}(iii).
\end{proof}

\begin{proposition} \label{propisition1}
Let $A$ be a complete normed complex algebra, let $a_\lambda $ be an approximate identity bounded by $1$ in $A$, and let $T$ be in $H(BL(A),I_A)$ such that
$T(a_\lambda )=0$ for every $\lambda $. Then $iT(LOH(A)) \subseteq LOH(A)$.
\end{proposition}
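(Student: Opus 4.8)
```latex
The plan is to transfer the problem from the algebra $A$ to its unitization
$B=\K{\bf 1}+A\subseteq A''$ furnished by Lemma~\ref{4.2}, where the notion of
hermitian element is the classical one relative to a unit, and there a
well-developed calculus is available. The operator $T\in H(BL(A),I_A)$ is
bounded and hermitian, so $iT$ maps hermitians to skew-hermitians in the sense
of $V$-theory; the content of the proposition is that, after passing through
the multiplication representation $a\mapsto L_a^A$, the hermitian part
$LOH(A)$ is still stabilized by the \emph{rotated} operator $iT$. The
hypothesis $T(a_\lambda)=0$ is what will keep everything inside $A$ rather than
only inside $A''$.

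First I would make the identification between left operator hermiticity in $A$
and ordinary hermiticity in $B$ explicit. By Lemma~\ref{4.2}(iii) we have
$LOV(A,a)=V(B,{\bf 1},a)$ for every $a\in A$, so $a\in LOH(A)$ precisely when
$a$ is hermitian in $(B,{\bf 1})$, i.e. $V(B,{\bf 1},a)\subseteq\R$. Thus
$LOH(A)=H(B,{\bf 1})\cap A$. Next I would extend $T$ to an operator on $B$.
Since $T\in H(BL(A),I_A)$, the bitransposed map $T'':A''\to A''$ is
$w^*$-continuous and extends $T$; I would check, exactly as in the proof of
Lemma~\ref{4.2}(iii) where one shows $(L_a^A)''({\bf 1})=a$, that
$T''({\bf 1})$ equals the $w^*$-cluster point of the net $T(a_\lambda)=0$,
hence $T''({\bf 1})=0$. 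Therefore $B=\K{\bf 1}+A$ is invariant under $T''$ and
$\wh T:=T''_{|B}$ is a bounded operator on $B$ with $\wh T({\bf 1})=0$ and
$\wh T(A)\subseteq A$.

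The crux is then to show that $\wh T$ is hermitian on the unital algebra $B$,
so that the unital theory of hermitian operators applies. This is the step I
expect to be the main obstacle: one must pass from $T\in H(BL(A),I_A)$ to
$\wh T\in H(BL(B),I_B)$, and the natural route is a numerical-range comparison
between $BL(B)$ and $BL(A)$ as in Lemma~\ref{4.2}(iii), using that $\wh T$
restricts to $T$ on $A$ and annihilates ${\bf 1}$. Concretely I would verify
$V(BL(B),I_B,\wh T)\subseteq V(BL(A''),I_{A''},T'')=V(BL(A),I_A,T)\subseteq\R$
via the cited \cite[Lemmas 2.1.10 and 2.2.24]{CR}, so that $\wh T$ is hermitian.

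Once $\wh T\in H(BL(B),I_B)$ with $\wh T({\bf 1})=0$, I would invoke the unital
fact that a hermitian operator on a unital normed algebra that kills the unit
maps $H(B,{\bf 1})$ into itself after multiplication by $i$ in the appropriate
sense; equivalently, since $B$ is a complete $V$-algebra, its natural
involution $*$ makes $H(B,{\bf 1})$ a Jordan algebra and hermitian operators
vanishing at ${\bf 1}$ act as $*$-skew derivations, giving
$i\wh T(H(B,{\bf 1}))\subseteq H(B,{\bf 1})$. Finally, for $h\in LOH(A)=
H(B,{\bf 1})\cap A$ we then have $i\wh T(h)=iT(h)\in A$ (because
$\wh T(A)\subseteq A$) and $iT(h)\in H(B,{\bf 1})$, whence $iT(h)\in
H(B,{\bf 1})\cap A=LOH(A)$. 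As $h$ is arbitrary this yields
$iT(LOH(A))\subseteq LOH(A)$, completing the argument.
```
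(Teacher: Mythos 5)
Your preparatory steps are sound, and in fact they replicate the technique the paper uses inside the proof of Lemma \ref{4.2}(iii): the identification $LOH(A)=H(B,{\bf 1})\cap A$ via Lemma \ref{4.2}(iii), the extension $\wh{T}:=T''_{|B}$ with $\wh{T}({\bf 1})=0$ and $\wh{T}_{|A}=T$, and the hermiticity transfer
$V(BL(B),I_B,\wh{T})\subseteq V(BL(A''),I_{A''},T'')\subseteq V(BL(A),I_A,T)\subseteq \R$
(restriction to an invariant subspace shrinks the operator numerical range, and $F\mapsto F''$ is a unital isometric embedding, so \cite[Corollary 2.1.2]{CR} applies) all go through. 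The genuine gap is your final step. You justify $i\wh{T}(H(B,{\bf 1}))\subseteq H(B,{\bf 1})$ by asserting that ``$B$ is a complete $V$-algebra'' on which hermitian operators vanishing at ${\bf 1}$ act as $*$-skew derivations. Neither half is available. Proposition \ref{propisition1} makes no $V$-algebra hypothesis on $A$ whatsoever --- deliberately so, since in Lemma \ref{estupendo} it is applied to $M_2(A)$ with $T=l_\alpha -r_\alpha$ \emph{before} it is known that $M_2(A)$ is a left operator $V$-algebra; smuggling in that assumption would make the downstream application circular. And ``a hermitian operator killing the unit is $i$ times a Jordan $*$-derivation'' is a Kadison-type theorem valid for $C^*$- or $JB^*$-algebras (via surjective isometries fixing the unit being Jordan $*$-automorphisms), not for an arbitrary unital complete normed algebra, which is all $B$ is known to be here.

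The fact you need is true at the level of Banach spaces, and it is proved by the exponential trick --- which is precisely the paper's entire proof. Since $T\in H(BL(A),I_A)$, \cite[Corollary 10.13]{BD} (cf. \cite[Corollary 2.1.9(iii)]{CR}) gives that $\exp (irT)$ is a linear isometry for every real $r$; since $T(a_\lambda )=0$, we have $\exp (irT)(a_\lambda )=a_\lambda $ for every $\lambda $; hence Lemma \ref{4.2bis}, applied with $A_1=A_2=A$ and $\phi =\exp (irT)$, yields $LOV(A,\exp (irT)(a))\subseteq LOV(A,a)$ for every $a\in A$, so $\exp (irT)(LOH(A))\subseteq LOH(A)$; finally $iT(a)=\lim _{r\to 0}\frac{1}{r}[\exp (irT)(a)-a]$ together with the closedness of $LOH(A)$ concludes. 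The same three lines applied to $\wh{T}$ on $B$ (an isometry fixing ${\bf 1}$, then \cite[Corollary 2.1.2]{CR} to get invariance of $H(B,{\bf 1})$, then differentiation; note $B$ is complete since $A$ is closed in $A''$) would rescue your argument --- but once you write them down you will see that the entire detour through $A''$, $B$, and the hermiticity transfer is superfluous, because the argument runs directly in $A$ via Lemma \ref{4.2bis}, and that is exactly what the paper does.
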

\begin{proof}
Let $r$ be a nonzero real number. Then, since $T \in H(BL(A), I_A)$, it follows from \cite[Corollary 10.13]{BD} (see also \cite[Corollary 2.1.9(iii)]{CR})
that  $\exp (irT) $ is a linear isometry. On the
other hand, since $T(a_\lambda )=0$ for every $\lambda $, we have $\exp (irT) (a_\lambda )=a_\lambda $ for every $\lambda $. Therefore, by Lemma \ref{4.2bis}, for every $a\in A$ we have
$LOV(A,\exp (irT)(a))\subseteq LOV(A,a)$.
As a consequence, we derive that
$\exp (irT) (LOH(A)) \subseteq LOH(A)$.
Keeping in mind that
$LOH(A)$ is a closed real subspace of $A$, and that
$$iT(a)= \lim_{r \to 0} \frac{1}{r} [( \exp (irT))(a) -a]$$ for
every $a \in A$, we obtain $iT(LOH(A)) \subseteq LOH(A)$, as
required.
\end{proof}

Now, let $X$ be a complex vector space endowed with a conjugate-linear involution $*$, and let~$n$ be in $\N$.
Then $M_n(X)$ has a {\it canonical} conjugate-linear {\it
involution}, namely the one consisting of
transposing the matrix and applying $*$ to each
entry.

\begin{lemma} \label{estupendo}
Let $A$ be a nonzero complete left operator $V$-algebra having an approximate identity bounded by $1$ and enjoying a matricial $L_\infty
^2$-structure. Then $(M_2(A),\|\cdot \|_2)$ is a complete left operator $V$-algebra having an approximate identity bounded by $1$, and the natural involution of $M_2(A)$ is the canonical involution corresponding to the natural involution of $A$.
\end{lemma}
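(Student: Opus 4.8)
The plan is to verify the three assertions in turn, reducing the genuinely algebraic content to the numerical-range facts already available. Throughout, write $e_{ij}(a)$ for the element of $M_2(A)$ with $a$ in the $(i,j)$ entry and $0$ elsewhere, $E_{ij}$ for the scalar matrix units of $M_2(\C)$, and $*$ for the natural involution of $A$ (which exists, since $A$ has an approximate identity $a_\lambda$). I would first record the elementary facts. Condition (v) of Definition \ref{definition0} gives $\|e_{11}(a)\|_2=\|a\|$, and condition (iv) together with $e_{ij}(a)=E_{i1}e_{11}(a)E_{1j}$ and $e_{ij}(x_{ij})=E_{ii}xE_{jj}$ yields $\|e_{ij}(a)\|_2=\|a\|$ and $\|x_{ij}\|\le\|x\|_2\le\sum_{i,j}\|x_{ij}\|$ for every $x=(x_{ij})\in M_2(A)$. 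Hence $\|\cdot\|_2$ is equivalent to the maximum of the entry norms, so $(M_2(A),\|\cdot\|_2)$ is complete because $A$ is. Moreover $\|a_\lambda\oplus a_\lambda\|_2=\|a_\lambda\|\le 1$ by (v), and $\|(a_\lambda\oplus a_\lambda)x-x\|_2$ and $\|x(a_\lambda\oplus a_\lambda)-x\|_2$ tend to $0$ entrywise, so $a_\lambda\oplus a_\lambda$ is an approximate identity of $M_2(A)$ bounded by $1$.

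The heart of the proof is to show that the real subspace
\[
S:=\{e_{11}(h)+e_{22}(k)+e_{12}(a)+e_{21}(a^*):h,k\in LOH(A),\ a\in A\}
\]
(the fixed-point set of the canonical involution) is contained in $LOH(M_2(A))$. I would apply Lemma \ref{4.2} to $M_2(A)$ to obtain the unit ${\bf 1}_2={\bf 1}\oplus{\bf 1}$ and the norm-unital algebra $B_2:=\C{\bf 1}_2+M_2(A)$, so that by assertion (iii) of that lemma $LOH(M_2(A))=H(B_2,{\bf 1}_2)\cap M_2(A)$; likewise $LOH(A)=H(B,{\bf 1})\cap A$ for $B:=\C{\bf 1}+A$. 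By real-linearity of $LOH$ it suffices to treat the diagonal generators $e_{11}(h),e_{22}(h)$ and the off-diagonal generators $e_{12}(a)+e_{21}(a^*)$. For the diagonal ones, note that ${\bf 1}_2+ir\,e_{11}(h)=({\bf 1}+irh)\oplus{\bf 1}$ is a diagonal element, so by Lemma \ref{4.2}(ii) and condition (v),
\[
\|{\bf 1}_2+ir\,e_{11}(h)\|_2=\lim_\lambda\max\{\|a_\lambda+irh\|,\|a_\lambda\|\}=\max\{\|{\bf 1}+irh\|,1\}=\|{\bf 1}+irh\|.
\]
Since $h\in H(B,{\bf 1})$, the right-hand side is $1+o(r)$, whence $e_{11}(h)$ is hermitian in $(B_2,{\bf 1}_2)$, i.e. $e_{11}(h)\in LOH(M_2(A))$; the case $e_{22}(h)$ is identical.

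For the off-diagonal generators I expect the \emph{main obstacle}, since now ${\bf 1}_2+ir(e_{12}(a)+e_{21}(a^*))$ is no longer diagonal and the $\ell_\infty$-condition (v) is useless. Here I would invoke Proposition \ref{propisition1}: for each self-adjoint $\sigma\in M_2(\C)$ put $T_\sigma:=\tfrac12(L_\sigma-R_\sigma)$, where $L_\sigma,R_\sigma$ denote left and right multiplication by $\sigma$ in the $M_2(\C)$-bimodule $M_2(A)$. Since $L_\sigma$ generates the group $x\mapsto\exp(ir\sigma)x$ of isometries (condition (iv)), it is hermitian, and similarly for $R_\sigma$, so $T_\sigma\in H(BL(M_2(A)),I)$; a direct computation gives $\sigma(a_\lambda\oplus a_\lambda)=(a_\lambda\oplus a_\lambda)\sigma$, so $T_\sigma$ annihilates $a_\lambda\oplus a_\lambda$ exactly. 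Hence $iT_\sigma(LOH(M_2(A)))\subseteq LOH(M_2(A))$. Taking $\sigma=E_{12}+E_{21}$ and $\sigma=i(E_{21}-E_{12})$ and applying $iT_\sigma$ to the already-established $e_{11}(h)$ produces, up to a nonzero scalar, $e_{12}(a)+e_{21}(a^*)$ for $a=-ih\in iLOH(A)$ and for $a=h\in LOH(A)$ respectively; by real-linearity in $a$ and $A=LOH(A)\oplus iLOH(A)$ (Lemma \ref{4.100}(ii)), all off-diagonal generators are obtained, so $S\subseteq LOH(M_2(A))$.

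Finally I would assemble the conclusion. Because the canonical involution is a conjugate-linear involution with fixed-point set $S$, we have $M_2(A)=S\oplus iS$; combined with $S\subseteq LOH(M_2(A))$ this shows $M_2(A)$ is the complex linear hull of $LOH(M_2(A))$, i.e. a left operator $V$-algebra. Being complete with approximate identity $a_\lambda\oplus a_\lambda$, it satisfies $M_2(A)=LOH(M_2(A))\oplus iLOH(M_2(A))$ by Lemma \ref{4.100}(ii). Comparing this with $M_2(A)=S\oplus iS$ forces $S=LOH(M_2(A))$: if $w\in LOH(M_2(A))$ is written $w=s_1+is_2$ with $s_j\in S\subseteq LOH(M_2(A))$, then $is_2=w-s_1\in LOH(M_2(A))\cap iLOH(M_2(A))=0$, so $w=s_1\in S$. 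Therefore the natural involution of $M_2(A)$, being the unique conjugate-linear involution fixing $LOH(M_2(A))=S$, coincides with the canonical involution corresponding to $*$, as claimed.
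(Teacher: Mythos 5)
Your proof is correct, and its skeleton is the same as the paper's: both establish completeness and the approximate identity $a_\lambda\oplus a_\lambda$ for $(M_2(A),\|\cdot\|_2)$, prove the diagonal generators $h\oplus 0$ and $0\oplus h$ ($h\in LOH(A)$) are left operator hermitian via Lemma \ref{4.2}(ii)--(iii) together with condition (v) and the Lumer-type characterization, and capture the off-diagonal generators by applying Proposition \ref{propisition1} to the hermitian operators $l_\sigma-r_\sigma$ for self-adjoint $\sigma\in M_2(\C)$, which annihilate $a_\lambda\oplus a_\lambda$. There is, however, one genuine local improvement on your side. The paper opens by invoking Theorem \ref{unit-freenon-associativeVP} and Youngson's theorem \cite{Y0} to make the natural involution an isometric algebra involution, so that $a_\lambda$ can be replaced by the self-adjoint approximate identity $\frac{1}{2}(a_\lambda+a_\lambda^*)$, which is what justifies the termwise inequality $\|a_\lambda\|\leq\|a_\lambda+irh\|$ in the diagonal computation. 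You bypass all of this: since $\|a_\lambda+irh\|\to\|{\bf 1}+irh\|$ and $\|a_\lambda\|\to\|{\bf 1}\|=1$ by Lemma \ref{4.2}(i)--(ii), continuity of the maximum gives $\lim_\lambda\max\{\|a_\lambda+irh\|,\|a_\lambda\|\}=\max\{\|{\bf 1}+irh\|,1\}=\|{\bf 1}+irh\|$, where the last equality (which you should state explicitly) follows since any state $f$ of $(B,{\bf 1})$ gives $\|{\bf 1}+irh\|\geq|1+irf(h)|\geq 1$, $f(h)$ being real. This removes a heavy dependency from the lemma itself. Your remaining deviations are cosmetic: you verify completeness of $M_2(A)$ by hand rather than citing \cite[Lemma 2.4.21]{CR}; you obtain hermiticity of $l_\sigma$ and $r_\sigma$ from the isometry groups $\exp(irl_\sigma)=l_{\exp(ir\sigma)}$ instead of the numerical-range contraction argument via \cite[Corollary 2.1.2]{CR}; and you spell out the final identification $S=LOH(M_2(A))$ using $LOH(M_2(A))\cap iLOH(M_2(A))=0$ from Lemma \ref{4.100}(ii), a step the paper leaves implicit.
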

\begin{proof}
Let $*$ denote the natural involution of $A$ (cf. Definition \ref{definition2}), and let $a_\lambda $ be the approximate identity in $A$ bounded by $1$ whose existence has been assumed. Then, by Theorem \ref{unit-freenon-associativeVP} and \cite{Y0} (cf. \cite[Proposition~3.3.13]{CR}), $*$ is an isometric algebra involution on $A$, and hence $b_\lambda :=\frac{1}{2}(a_\lambda +a_\lambda ^*)$ becomes an approximate identity bounded by $1$ in $A$ such that $b_\lambda \in LOH(A)$ for every $\lambda $. Therefore we may and will assume that $a_\lambda \in LOH(A)$ for every~$\lambda $.
Note also that, by \cite[Lemma 2.4.21]{CR} and condition (v) in Definition \ref{definition0}, $(M_2(A),\|\cdot \|_2)$ is a complete normed algebra, and $a_\lambda \oplus a_\lambda $ is an approximate identity bounded by $1$ in $M_2(A)$. Now we are going to prove the following.
\begin{enumerate}
\item[\bf \\ Claim] \textit{ $$\mbox{$\begin{pmatrix} h & 0 \\
0 & 0 \end{pmatrix}$, $\begin{pmatrix} 0 & 0 \\
0  & h \end{pmatrix}$, $\begin{pmatrix} 0 & h \\
h  & 0 \end{pmatrix}$, \ and \ $\begin{pmatrix} 0 & ih \\
-ih  & 0 \end{pmatrix}$} $$  lie in \ $LOH(M_2(A))$ whenever $h$ belongs to $LOH(A)$.}
\end{enumerate}
 To prove the claim we begin by applying Lemma \ref{4.2} to both $(A,a_\lambda )$ and $(M_2(A),a_\lambda \oplus a_\lambda )$. We denote by ${\bf 1}_2$ and $B_2$ the element and the subalgebra of $M_2(A)''$ given by Lemma \ref{4.2} in the case that $(M_2(A),a_\lambda \oplus a_\lambda )$ replaces $(A,a_\lambda )$. Let $h$ be in $LOH(A)$. Then, by Lemma \ref{4.2}(ii), for every $r>0$ we have $\Vert
{\bf 1}+irh\Vert =\lim _{\lambda} \Vert a_\lambda +irh\Vert $, and also, considering condition (v) in Definition \ref{definition0}, we have
\[
\Vert
\mbox{${\bf 1}_2+ir(h\oplus 0)\Vert _2=\lim _{\lambda} \Vert a_\lambda \oplus a_\lambda +ir(h\oplus 0)\Vert _2$}
\]
\[
\mbox{$=\lim _{\lambda} \Vert (a_\lambda +irh)\oplus a_\lambda \Vert _2=\lim _{\lambda} \max \{\|a_\lambda +irh\|,\|a_\lambda \|\}$.}
\]
But $\|a_\lambda \|\leq \|a_\lambda +irh\|$ for every $\lambda $ because $a_\lambda $ and $rh$ belongs to $LOH(A)$, and $*$ is an isometric conjugate-linear involution on $A$. It follows that
\[
\Vert
{\bf 1}+irh\Vert =\Vert
{\bf 1}_2+ir(h\oplus 0)\Vert _2,
\]
 and, replacing $h$ with $-h$, also
\[
\Vert
{\bf 1}-irh\Vert =\Vert
{\bf 1}_2-ir(h\oplus 0)\Vert _2
\]
 Therefore, since $r$ is an arbitrary positive real number, and $h$ belongs to $LOH(A)$ ($=H(B,{\bf 1})$ by Lemma \ref{4.2}(iii)), we derive from \cite{Lu} (cf. \cite[Proposition 2.1.5]{CR}) that
$\begin{pmatrix} h & 0 \\
0 & 0 \end{pmatrix}=h\oplus 0\in H(B_2,{\bf 1}_2)=LOH(M_2(A))$.
Analogously, $\begin{pmatrix} 0 & 0 \\
0  & h \end{pmatrix}\in LOH(M_2(A))$.
Let $\alpha $ be in $M_2(\C )$, and let $l _\alpha $ and $r_\alpha
$ stand for the operators of left and right multiplication by
$\alpha $ on the $M_2(\C )$-bimodule $M_2(A)$. By condition~(iv) in Definition \ref{definition0}, the
mappings $\alpha \to l_\alpha $  and $\alpha \to r_\alpha $ from
$M_2(\C )$ to $BL(M_2(A))$ are linear contractions taking the unit
${\bf 1}$ of $M_2(\C )$ to $I_{M_2(A)}$, and hence, by
\cite[Corollary 2.1.2(i)]{CR}, we have
$$\mbox{$V(BL(M_2(A)),I_{M_2(A)},l_\alpha)\subseteq V(M_2(\C ),{\bf
1},\alpha )$ \ \ and }$$  $$V(BL(M_2(A)),I_{M_2(A)},r_\alpha)\subseteq
V(M_2(\C ),{\bf 1},\alpha ).$$ Therefore $l_\alpha -r_\alpha $
lies in $H(BL(M_2( \! A \! )),I_{M_2( \! A \!)})$ whenever $\alpha $ is in
$H(M_2( \! \C \! ),{\bf 1})$. Note also that  $(l_\alpha
-r_\alpha )(a_\lambda \oplus a_\lambda )=0$ for every $\lambda $. It follows from
Proposition \ref{propisition1} that $i(\alpha x-x\alpha )$ lies in
$LOH(M_2(A))$ whenever $\alpha $ is in $H(M_2(\C ),{\bf
1})$ and $x$ is in
$LOH(M_2(A))$. By taking $$\mbox{$\alpha :=\begin{pmatrix} 0 & i \\
-i  & 0 \end{pmatrix}\in H(M_2(\C ),{\bf
1})$ \ and \ $x:=\begin{pmatrix} h & 0 \\
0  & 0 \end{pmatrix}\in LOH(M_2(A))$}$$
(successively, $\alpha :=\begin{pmatrix} 0 & 1 \\
1  & 0 \end{pmatrix}$  and $x:=\begin{pmatrix} 0 & 0 \\
0  & h \end{pmatrix}$), the above gives that both $\begin{pmatrix} 0 & h \\
h  & 0 \end{pmatrix}$ and $\begin{pmatrix} 0 & ih \\ -ih  & 0
\end{pmatrix}$ lie in $LOH(M_2(A))$, thus concluding the proof of the claim.

Now endow $M_2(A)$ with the canonical involution corresponding to $*$. It follows from the claim just proved that $LOH(M_2(A))$ contains the set of all matrices invariant under such an involution. Therefore $M_2(A)$ is a left operator $V$-algebra, and the natural involution of $M_2(A)$ is the canonical involution corresponding to $*$.
\end{proof}

Now the unit-free version of the Blecher--Ruan--Sinclair non-associative characterization of unital $C^*$-algebras reads as follows.

\begin{theorem} \label{unit-freeBlRuSi}
Let $A$ be a nonzero normed complex
algebra. Then the following conditions are equivalent:
\begin{enumerate}
\item [\rm (i)] $A$ is a non-associative  $C^*$-algebra having an approximate identity boun\-ded by $1$ and enjoying a matricial  $L_\infty $-structure.
\item [\rm (ii)] $A$ is a complete left operator $V$-algebra having an approximate identity bounded by $1$ $($endowing $A$ with its natural involution$)$, and enjoying a matricial $L_\infty
$-structure.
\item [\rm (iii)] $A$ is a non-associative
$C^*$-algebra having an approximate identity boun\-ded by $1$ and enjoying a matricial  $L_\infty ^2$-structure.
 \item [\rm (iv)]
$A$ is a complete left operator $V$-algebra having an approximate identity bounded by $1$ $($endowing $A$ with its natural involution$)$, and enjoying a matricial $L_\infty
^2$-structure.
\item [\rm (v)] $A$ is a $C^*$-algebra.
\end{enumerate}
\end{theorem}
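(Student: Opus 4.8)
The plan is to establish the circle (v) $\Rightarrow$ (i) $\Rightarrow$ (iii) $\Rightarrow$ (iv) $\Rightarrow$ (v), together with (i) $\Rightarrow$ (ii) $\Rightarrow$ (iv), so that the five conditions become equivalent. Two of these arrows are mere bookkeeping. For (i) $\Rightarrow$ (iii) and (ii) $\Rightarrow$ (iv) it suffices to retain only the $n=2$ component of the matricial $L_\infty$-structure, since conditions (iv) and (v) in Definition \ref{definition0} are precisely conditions (ii) and (iii) there specialised to $n=2$ and to $n=m=1$. For (v) $\Rightarrow$ (i), a $C^*$-algebra is associative and satisfies the Gelfand--Naimark axiom, hence is in particular a nonzero non-associative $C^*$-algebra; it has an approximate identity bounded by $1$ by \cite[Lemma 39.14]{BD}; and it carries the canonical matricial $L_\infty$-structure supplied by \cite[Proposition 2.4.22]{CR}.

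The first substantial ingredient is a bridge giving both (i) $\Rightarrow$ (ii) and (iii) $\Rightarrow$ (iv), namely that \emph{every nonzero non-associative $C^*$-algebra $A$ with an approximate identity bounded by $1$ is a complete left operator $V$-algebra whose natural involution is its original involution} $*$. As $*$ is an isometric algebra involution, I may assume the approximate identity $a_\lambda$ to be self-adjoint. Fixing a self-adjoint $h$ and a real $r$, and passing to the unitization $B=\K{\bf 1}+A$ of Lemma \ref{4.2}, I would use Lemma \ref{4.2}(ii) and the Gelfand--Naimark axiom applied to $a_\lambda+irh\in A$ to obtain
\[
\|{\bf 1}+irh\|^2=\lim_\lambda\|a_\lambda+irh\|^2=\lim_\lambda\big\|a_\lambda^2+r^2h^2+ir(a_\lambda h-ha_\lambda)\big\|\le 1+r^2\|h\|^2 .
\]
The decisive point is that the first-order term is the commutator $a_\lambda h-ha_\lambda$, which tends to $0$ in norm because $a_\lambda$ is a two-sided approximate identity; thus no associator spoils the estimate. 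The same bound holds with $-r$, so \cite{Lu} (cf. \cite[Proposition 2.1.5]{CR}) gives $h\in H(B,{\bf 1})$, that is $h\in LOH(A)$ by Lemma \ref{4.2}(iii). Hence the self-adjoint part of $A$ lies in $LOH(A)$; since $A$ is the complex span of its self-adjoint part, $A$ is a left operator $V$-algebra, and comparing the decomposition $A=LOH(A)\oplus iLOH(A)$ of Lemma \ref{4.100}(ii) with $A=A_{\mathrm{sa}}\oplus iA_{\mathrm{sa}}$ identifies $LOH(A)$ with $A_{\mathrm{sa}}$, hence the natural involution with $*$. Carrying the matricial structure along unchanged then yields (i) $\Rightarrow$ (ii) and (iii) $\Rightarrow$ (iv).

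The heart of the matter is (iv) $\Rightarrow$ (v). Assuming (iv), Theorem \ref{unit-freenon-associativeVP} makes $A$ a non-commutative $JB^*$-algebra, and Lemma \ref{estupendo} turns $(M_2(A),\|\cdot\|_2)$ into another complete left operator $V$-algebra with an approximate identity bounded by $1$ and with the canonical involution as its natural involution; a second application of Theorem \ref{unit-freenon-associativeVP} shows that $M_2(A)$ is itself a non-commutative $JB^*$-algebra, in particular a non-commutative Jordan algebra. It then remains to force $A$ to be associative, after which \cite[Fact 3.3.2]{CR} identifies the associative non-commutative $JB^*$-algebra $A$ as a $C^*$-algebra, giving (v). I expect this associativity forcing to be the main obstacle. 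The plan is to exploit the Jordan identity $(X^2Y)X=X^2(YX)$, valid in any non-commutative Jordan algebra: evaluating it on the matrix $X$ having $a$ in the $(1,2)$ entry and $c$ in the $(2,1)$ entry, and on $Y$ having $b$ in the $(1,2)$ entry (all other entries $0$, with $a,b,c\in A$), gives in the $(1,1)$ entry the identity $((ac)b)c=(ac)(bc)$, valid for all $a,b,c\in A$. Linearising this in $c$ and then substituting an approximate identity for the resulting new variable, so that the extra terms collapse under the approximate-identity limits, yields $(ab)c=a(bc)$, i.e. the associativity of $A$. With $A$ associative, (v) follows and the circle closes.
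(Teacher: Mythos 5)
Your proposal is correct, and its skeleton --- Lemma \ref{estupendo} plus Theorem \ref{unit-freenon-associativeVP} applied to $M_2(A)$, followed by forcing the associativity of $A$ and concluding via the unit-free Vidav--Palmer theorem --- is exactly the paper's. You diverge at two steps, in both cases trading a citation for a hands-on argument. First, for (iii)$\Rightarrow$(iv) the paper invokes the non-associative Gelfand--Naimark theorem of \cite{R11} (cf. \cite[Theorem 3.5.53]{CR}) to conclude that $A$ is an alternative $C^*$-algebra and then applies the easy part of Corollary \ref{alternativeunit-freeVP}; your bridge instead establishes $A_{\rm sa}\subseteq LOH(A)$ directly from the Gelfand--Naimark axiom via $\|{\bf 1}+irh\|^2=\lim_\lambda\|a_\lambda^2+r^2h^2+ir(a_\lambda h-ha_\lambda)\|\leq 1+r^2\|h\|^2$ and the Lumer criterion, much in the spirit of the paper's own proof of Lemma \ref{estupendo}. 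That computation is sound (expanding $(a_\lambda-irh)(a_\lambda+irh)$ uses only bilinearity, and the commutator dies in the limit since $a_\lambda$ is two-sided), and it buys independence from the deep result \cite{R11}; you should just record the one-line fact that the involution of a non-associative $C^*$-algebra is isometric ($\|a\|^2=\|a^*a\|\leq\|a^*\|\,\|a\|$, then apply the same to $a^*$), which legitimizes your replacement of $a_\lambda$ by its self-adjoint symmetrization. Second, for the associativity forcing inside (iv)$\Rightarrow$(v), the paper simply observes that $M_2(A)$ is flexible and quotes \cite[Lemma 2.4.23]{CR}; your route through the identity $(X^2Y)X=X^2(YX)$, the resulting $(1,1)$-entry identity $((ac)b)c=(ac)(bc)$, its linearization in $c$, and an approximate-identity limit is also correct, but heavier than necessary on two counts: it tacitly relies on the (true, characteristic-zero) equivalence between the definition of non-commutative Jordan algebras via the symmetrized product $\bullet$ and the Jordan identity for the given product, and the flexible law alone --- available by definition --- already yields $(ab)c=a(bc)$ by taking $Y$ with $b$ in the $(2,2)$ entry, with no linearization or approximate identity at all. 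In short: your proof is valid; your first deviation genuinely lightens the paper's dependency graph, while your second needlessly complicates it.
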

\begin{proof}
The implications (i)$\Rightarrow $(iii) and (ii)$\Rightarrow $(iv)
are clear.

(iii)$\Rightarrow $(iv) By \cite{R11} (cf. \cite[Theorem 3.5.53]{CR}), $A$ is an alternative $C^*$-algebra, and then, by the easy part of Corollary \ref{alternativeunit-freeVP}, $A$ is a left operator $V$-algebra.

(v)$\Rightarrow $(i) By \cite[Propositions 3.5.23 and 2.4.22]{CR}.

(v)$\Rightarrow $(ii) By the easy part of Theorem \ref{unit-freeVP} and \cite[Proposition 2.4.22]{CR}.

Now to conclude the proof it only remains to prove that (iv) implies~(v).

Suppose that (iv) holds. Then, by Lemma \ref{estupendo}, $(M_2(A),\|\cdot \|_2)$ is a complete left operator $V$-algebra having an approximate identity bounded by $1$. Therefore,
by Theorem \ref{unit-freenon-associativeVP}, $M_2(A)$ is a non-commutative Jordan algebra. Thus $M_2(A)$ is a flexible algebra, which implies, by \cite[Lemma 2.4.23]{CR}, that $A$ is associative. Then, condition~(v) follows from Theorem \ref{unit-freeVP}.
\end{proof}

We note that, thanks to Lemma \ref{4.100}(i), Theorem \ref{unit-freeBlRuSi} contains its unital forerunner, namely Theorem \ref{BlRuSi}. Note also that, considering Remark \ref{nota}, we may realize that Theorem \ref{unit-freeBlRuSi} remains true if we replace ``left" with ``right" in conditions (ii) and (iv).

Combining Theorems \ref{unit-freenon-associativeVP} and \ref{unit-freeBlRuSi}, we obtain the following.

\begin{corollary} \label{sinetiqueta}
Let $A$ be a nonzero normed complex
algebra. Then the following conditions are equivalent:
\begin{enumerate}
\item [\rm (i)] $A$ is a non-commutative
$JB^*$-algebra enjoying a matricial  $L_\infty $-structure.
\item [\rm (ii)] $A$ is a non-commutative
$JB^*$-algebra enjoying a matricial  $L_\infty ^2$-structure.
\item [\rm (iii)] $A$ is a $C^*$-algebra.
\end{enumerate}
\end{corollary}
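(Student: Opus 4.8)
The plan is to establish the cycle (i)$\Rightarrow$(ii)$\Rightarrow$(iii)$\Rightarrow$(i), treating this corollary as essentially a bookkeeping combination of the two unit-free theorems already at our disposal, supplemented by the fact that every $C^*$-algebra carries a canonical matricial $L_\infty$-structure \cite[Proposition 2.4.22]{CR}.

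First I would dispose of the two cheap implications. The implication (i)$\Rightarrow$(ii) is immediate by restriction: any matricial $L_\infty$-structure restricts to a matricial $L_\infty^2$-structure, since conditions (iv) and (v) of Definition \ref{definition0} are precisely conditions (ii) and (iii) specialized to $n=2$ (respectively to $n=m=1$), and the norm $\Vert\cdot\Vert_2$ supplied by the $L_\infty$-structure is already an algebra norm on $M_2(A)$. For (iii)$\Rightarrow$(i), I would observe that a $C^*$-algebra is in particular a non-commutative $JB^*$-algebra, namely an associative one (by \cite[Fact 3.3.2]{CR}), and that it enjoys a matricial $L_\infty$-structure by \cite[Proposition 2.4.22]{CR}; this yields the whole of (i) at once.

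The substantive step is (ii)$\Rightarrow$(iii). Starting from a nonzero non-commutative $JB^*$-algebra $A$ enjoying a matricial $L_\infty^2$-structure, Theorem \ref{unit-freenon-associativeVP} tells us that $A$ is a complete left operator $V$-algebra having an approximate identity bounded by $1$, whose natural involution (as such) coincides with its $JB^*$-involution. Together with the assumed matricial $L_\infty^2$-structure, this is exactly the list of hypotheses constituting condition (iv) of Theorem \ref{unit-freeBlRuSi}. Invoking the implication (iv)$\Rightarrow$(v) of that theorem then delivers that $A$ is a $C^*$-algebra, which is condition (iii), and closes the cycle.

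The real difficulty is not located in this corollary but inside Theorem \ref{unit-freeBlRuSi}, whose proof passes through Lemma \ref{estupendo} and the non-associative Vidav--Palmer theorem to force associativity. The only genuine point of care here is to check that the structural data furnished by Theorem \ref{unit-freenon-associativeVP} (completeness, the left operator $V$-algebra structure, an approximate identity bounded by $1$, and the identification of the natural involution with the $JB^*$-involution) match the hypotheses of condition (iv) of Theorem \ref{unit-freeBlRuSi} verbatim, so that the passage (iv)$\Rightarrow$(v) applies with no ambiguity about which involution endows $A$.
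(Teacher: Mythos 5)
Your proposal is correct and matches the paper's intent exactly: the paper gives no detailed argument, stating only that the corollary follows by ``combining Theorems \ref{unit-freenon-associativeVP} and \ref{unit-freeBlRuSi},'' which is precisely your cycle --- the easy part of Theorem \ref{unit-freenon-associativeVP} converts condition (ii) into condition (iv) of Theorem \ref{unit-freeBlRuSi}, whose implication (iv)$\Rightarrow$(v) yields (iii), with the remaining implications handled by restriction of the matricial structure and by \cite[Fact 3.3.2 and Proposition 2.4.22]{CR}. Your check that the involution furnished by Theorem \ref{unit-freenon-associativeVP} coincides with the $JB^*$-involution is exactly the right point of care, and nothing further is needed.
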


Given an algebra $A$ over $\K$, by a \textit{Jordan subalgebra} of $A$ we mean a subalgebra of $A^{\rm sym}$. Norm-closed $*$-subalgebras of the $JB^*$-algebra $A^{\rm sym}$, for some $C^*$-algebra $A$, are called \textit{$JC^*$-algebras}.

Considering  Corollary \ref{sinetiqueta} and the commutative Gelfand-Naimark theorem, we obtain the following.

\begin{corollary}
Let $A$ be a nonzero normed complex
algebra. Then the following conditions are equivalent:
\begin{enumerate}
\item [\rm (i)] $A$ is a
$JB^*$-algebra enjoying a matricial  $L_\infty $-structure.
\item [\rm (ii)] $A$ is
$JB^*$-algebra enjoying a matricial  $L_\infty ^2$-structure.
\item [\rm (iii)] $A$ is a
$JC^*$-algebra enjoying a matricial  $L_\infty $-structure.
\item [\rm (iv)] $A$ is
$JC^*$-algebra enjoying a matricial  $L_\infty ^2$-structure.
\item [\rm (v)] $A$ is the $C^*$-algebra of all complex-valued continuous functions on some locally compact Hausdorff topological space, vanishing at infinity.
\end{enumerate}
\end{corollary}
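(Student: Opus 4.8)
The plan is to deduce everything from Corollary \ref{sinetiqueta} by systematically exploiting commutativity. First I would record two elementary identifications. A $JB^*$-algebra is, by definition, a commutative non-commutative $JB^*$-algebra, so in particular its underlying product is commutative. Moreover, each $JC^*$-algebra is a norm-closed $*$-subalgebra of some $A^{\rm sym}$ with $A$ a $C^*$-algebra; since $A^{\rm sym}$ is a $JB^*$-algebra by \cite[Fact~3.3.4]{CR} and a norm-closed $*$-subalgebra of a $JB^*$-algebra is again a $JB^*$-algebra (completeness being inherited from norm-closedness, and the defining norm identity $\Vert U_a(a^*)\Vert=\Vert a\Vert^3$ being intrinsic), every $JC^*$-algebra is a $JB^*$-algebra. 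Consequently condition (iii) immediately implies (i), and condition (iv) immediately implies (ii), since in each case one simply forgets the extra $JC^*$ information while keeping the matricial structure.

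Next I would show that (i) implies (v) and (ii) implies (v). Assume (i): then $A$, with its given product, is a non-commutative $JB^*$-algebra enjoying a matricial $L_\infty$-structure, so the implication (i)$\Rightarrow$(iii) of Corollary \ref{sinetiqueta} tells us that $A$ is a $C^*$-algebra for that same product. Because a $JB^*$-algebra has commutative product, this $C^*$-algebra is commutative, whence the commutative Gelfand--Naimark theorem identifies it with the algebra of complex-valued continuous functions vanishing at infinity on some locally compact Hausdorff space; this is precisely (v). The argument for (ii)$\Rightarrow$(v) is verbatim, invoking instead the implication (ii)$\Rightarrow$(iii) of Corollary \ref{sinetiqueta}. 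Combining with the first paragraph, we obtain the chains (iii)$\Rightarrow$(i)$\Rightarrow$(v) and (iv)$\Rightarrow$(ii)$\Rightarrow$(v).

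It then remains to close the cycle by proving that (v) implies each of (i)--(iv). Suppose $A$ is the $C^*$-algebra described in (v); this is a commutative $C^*$-algebra, so the implications (iii)$\Rightarrow$(i) and (iii)$\Rightarrow$(ii) of Corollary \ref{sinetiqueta} equip $A$ with matricial $L_\infty$- and $L_\infty^2$-structures and exhibit it as a non-commutative $JB^*$-algebra. Commutativity of $A$ upgrades the latter to a $JB^*$-algebra, yielding (i) and (ii). Finally, since $A$ is commutative its Jordan product coincides with its associative product, so $A=A^{\rm sym}$ is trivially a norm-closed $*$-subalgebra of $A^{\rm sym}$, i.e.\ a $JC^*$-algebra; together with the matricial structures just produced this gives (iii) and (iv).

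The entire mathematical content is carried by Corollary \ref{sinetiqueta}, so I do not expect a genuine obstacle; the only points demanding care are the bookkeeping ones flagged above, namely that ``commutative non-commutative $JB^*$-algebra'' and ``$JB^*$-algebra'' coincide, that a $JC^*$-algebra is indeed a $JB^*$-algebra, and that the passage through Corollary \ref{sinetiqueta} preserves the (commutative) product, so that the commutative Gelfand--Naimark theorem may legitimately be applied to the resulting commutative $C^*$-algebra.
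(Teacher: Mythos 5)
Your proposal is correct and takes essentially the same route as the paper, which gives no written proof but states that the corollary is obtained by ``considering Corollary \ref{sinetiqueta} and the commutative Gelfand--Naimark theorem'' --- precisely the two ingredients you combine. The bookkeeping points you flag (that $JC^*$-algebras are $JB^*$-algebras, that ``commutative non-commutative $JB^*$-algebra'' means $JB^*$-algebra, and that Corollary \ref{sinetiqueta} is an equivalence for one fixed product, so commutativity survives the passage to the $C^*$-conclusion) are exactly the right ones, and you handle them correctly.
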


\begin{remark}
(a) Let $A$ be a complete normed algebra over $\K$ having an approximate identity bounded by $1$, and enjoying a matricial  $L_\infty $-structure. It follows from Ruan's theorem \cite[Theorem 3.1]{Rua} (cf. \cite[Theorem 1.2.13]{BlLeM}) and \cite[Theorem 2.3.2]{BlLeM} that $A$ is a closed subalgebra of $BL(H)$, for some Hilbert space $H$ over $\K$, and that the given  matricial $L_\infty $-structure of $A$ is nothing other than the one inherited from the natural matricial $L_\infty $-structure of $BL(H)$. Summarized in the classical terminology: $A$ is an \textit{operator algebra} (see \cite[p. 49]{BlLeM}). Considering this, it is not too hard to realize that the implication (ii)$\Rightarrow$(v) in Theorem~\ref{unit-freeBlRuSi} is true.

(b) It follows from Ruan's theorem quoted above and \cite[Proposition 1.2.4]{BlLeM} that a $C^*$-algebra enjoys a unique matricial $L_\infty $-structure.
\end{remark}

\section{Coming back to the unit-free version of the Vi\-dav--Palmer theorem for alternative $C^*$-algebras}

Let $A$ be a real or complex algebra. The \textit{left annihilator} of $A$ is defined as the set of those elements $a\in A$ such that $aA=0$, and is denoted by $l$-$\Ann (A)$. We note that  $l$-$\Ann (A)$ is nothing other than the set of those elements $a\in A$ such tat $L_a^A=0$.

We begin this section with a few elementary observations, which are picked up in the following.

\begin{fact} \label{fact00}
For a normed algebra $A$ over $\K$, the following assertions hold:
\begin{enumerate}
\item[\rm (i)] If there is a right approximate identity in $A$, then $l$-$\Ann (A)=0$.
\item[\rm (ii)] If there is a right approximate identity bounded by $M\geq 1$ in $A$, then for every $a\in A$ we have $\|L_a^A\|\leq \|a\|\leq M\|L_a^A\|$.
\item[\rm (iii)] If $\K =\C$, and if $A\neq 0$, then $LOH(A)\cap iLOH(A)=l$-$\Ann (A)$.
\end{enumerate}
\end{fact}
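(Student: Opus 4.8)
The plan is to treat the three assertions as genuinely elementary consequences of the definitions, with (i) and (ii) resting only on the right approximate identity together with submultiplicativity of the algebra norm, and (iii) resting on the non-associative Bohnenblust--Karlin vertex property (Corollary 2.1.13 of \cite{CR}). Throughout I would keep in mind that the assignment $a\mapsto L_a^A$ is a complex-linear map from $A$ into $BL(A)$ satisfying $\|L_a^A\|\le \|a\|$ and $L_{ia}^A=iL_a^A$, and that $l$-$\Ann (A)$ is exactly its kernel, since $L_a^A=0$ means $aA=0$.

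For (i), I would take $a\in l$-$\Ann (A)$ and a right approximate identity $u_\lambda$; as $au_\lambda=0$ for every $\lambda$, passing to the limit gives $a=\lim_\lambda au_\lambda=0$. For (ii), the inequality $\|L_a^A\|\le \|a\|$ is immediate from $\|ab\|\le \|a\|\|b\|$, while for the reverse inequality I would use a right approximate identity $u_\lambda$ bounded by $M$: for each $\lambda$ one has $\|au_\lambda\|=\|L_a^A(u_\lambda)\|\le M\|L_a^A\|$, and letting $\lambda$ run yields $\|a\|=\lim_\lambda \|au_\lambda\|\le M\|L_a^A\|$ by continuity of the norm.

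For (iii), I would first record the easy inclusion: if $a\in l$-$\Ann (A)$ then $L_a^A=0$ is hermitian and equals $i\cdot 0$, so $a$ lies in both $LOH(A)$ and $iLOH(A)$. For the reverse inclusion, the key point is that, using linearity of $a\mapsto L_a^A$ together with $L_{ia}^A=iL_a^A$, membership $a\in LOH(A)\cap iLOH(A)$ translates into $L_a^A\in H(BL(A),I_A)\cap iH(BL(A),I_A)$; since $A\neq 0$ makes $BL(A)$ a norm-unital normed algebra with unit $I_A$, the vertex property of Corollary 2.1.13 in \cite{CR} forces this intersection to reduce to $\{0\}$, whence $L_a^A=0$ and $a\in l$-$\Ann (A)$. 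The only nontrivial ingredient, and hence the main (imported) obstacle, is that vertex property; everything else is bookkeeping. I would finally note that combining (i) with (iii) recovers the computation $LOH(A)\cap iLOH(A)=0$ already carried out inside the proof of Lemma \ref{4.100}(ii).
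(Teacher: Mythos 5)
Your proposal is correct and follows essentially the same route as the paper: (i) and (ii) by evaluating $L_a^A$ on the bounded right approximate identity and passing to the limit, and (iii) by translating $a\in LOH(A)\cap iLOH(A)$ into $L_a^A\in H(BL(A),I_A)\cap iH(BL(A),I_A)$ and invoking the same vertex-property result \cite[Corollary 2.1.13]{CR} to conclude $L_a^A=0$. Your extra bookkeeping (linearity of $a\mapsto L_a^A$ and $L_{ia}^A=iL_a^A$) merely makes explicit what the paper leaves tacit, so there is no substantive difference.
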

\begin{proof}
(i) Suppose that $a_\lambda $ is a right approximate identity in $A$. Then, for \linebreak $a\in l$-$\Ann (A)$, we have $a=\lim _\lambda aa_\lambda =\lim _\lambda L_a^A(a_\lambda )=0$.

(ii) Suppose that $a_\lambda $ is a right approximate identity bounded by $M\geq 1$ in $A$, and let $a$ be in $A$. Then we have $\|a\|=\lim _\lambda \|aa_\lambda \|=\lim _\lambda \|L_a^A(a_\lambda )\|$. But $ \|L_a^A(a_\lambda )\|\leq M\|L_a^A\|$ for every $\lambda $. Therefore $\|a\|\leq M\|L_a^A\|$. Finally, the inequality $\|L_a^A\|\leq \|a\|$ is clear.

(iii) Suppose that $\K =\C$ and that $A\neq 0$. Then the inclusion

\[
\mbox{$l$-$\Ann (A)\subseteq LOH(A)\cap iLOH(A)$}
\]
 is clear. Let $a$ be in $LOH(A)\cap iLOH(A)$. Then
 \[
 L_a^A\in H(BL(A),I_A)\cap iH(BL(A),I_A),
 \]
  and hence, by \cite[Corollary 2.1.13]{CR}, $L_a^A=0$, i.e. $a$ lies in  $l$-$\Ann (A)$.
\end{proof}

Let $A$ be a left operator $V$-algebra. By the sake of comfortability, we defined the natural involution of $A$ only under the requirement that $A$ has a right approximate identity (cf. Definition \ref{definition2}). But, regarding assertions (i) and (iii) in Fact \ref{fact00}, we realize that such a \textit{natural involution} can be defined verbatim under the weaker and more natural requirement that \linebreak $l$-$\Ann (A)=0$. This will be assumed without notice in what follows.

\textit{Left alternative algebras} over $\K$ are defined as those algebras $A$ over $\K$ such that the equality $a^2b=a(ab)$ holds for all $a,b\in A$. We note that left alternative $*$-algebras are in fact alternative algebras.  Nevertheless, a priori this cannot be applied to left alternative left operator $V$-algebras with zero left annihilator, as we did not require that the natural involutions of such algebras be algebra involutions.

To do understandable the proof of the next theorem, let us recall some terminology.
Let $A$ be an algebra over $\K$. Note at first that Jordan subalgebras of $A$ are precisely those subspaces $C$ of $A$ such that $c^2$ lies in $C$ whenever $c$ belongs to $C$.  Now,
let $A$ and $B$ be algebras over $\K$. By a \textit{Jordan homomorphism} from $A$ to $B$ we mean
an algebra homomorphism from $A^{\rm sym}$ to $B^{\rm sym}$. We note that Jordan homomorphisms
from $A$ to $B$ are precisely those linear mappings $F : A\to B$ such that $F(a^2) = F(a)^2$
for every $a\in A$.

Now we can prove the following refinement of Corollary ~\ref{alternativeunit-freeVP}.

\begin{theorem} \label{finalfinal}
For a complete normed complex algebra $A$, the following conditions are equivalent:
\begin{enumerate}
\item[\rm (i)] $A$ is a nonzero alternative $C^*$-algebra.
\item[\rm (ii)] $A$ is an alternative left operator $V$-algebra having an approximate identity bounded by $1$ $($endowing $A$ with its natural involution$)$.
\item[\rm (iii)] $A$ is a  left alternative left operator $V$-algebra having a right approximate identity bounded by $1$ $($endowing $A$ with its natural involution$)$.
\item[\rm (iv)] $A$ is a  left alternative left operator $V$-algebra such that the equality $\|L_a^A\|=\|a\|$ holds for every $a\in A$ $($endowing $A$ with its natural involution, which can be considered because in this case the equality $l$-$\Ann (A)=0$ is clear$)$.
\end{enumerate}
\end{theorem}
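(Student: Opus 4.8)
The plan is to run the cycle (i)$\Leftrightarrow$(ii)$\Rightarrow$(iii)$\Rightarrow$(iv)$\Rightarrow$(i), with essentially all the content concentrated in the last implication. The equivalence (i)$\Leftrightarrow$(ii) is immediate from Corollary \ref{alternativeunit-freeVP}, since $A$ is assumed complete and a left operator $V$-algebra is nonzero by Definition \ref{definition1}. For (ii)$\Rightarrow$(iii) one only notes that an alternative algebra is left alternative and that a two-sided approximate identity bounded by $1$ is in particular a right one. For (iii)$\Rightarrow$(iv), Fact \ref{fact00}(i)--(ii) with $M=1$ turns a right approximate identity bounded by $1$ into the isometry $\|L_a^A\|=\|a\|$ and guarantees $l$-$\Ann(A)=0$, so the natural involution still makes sense. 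Thus everything reduces to proving (iv)$\Rightarrow$(i).

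For (iv)$\Rightarrow$(i) I would pass to the left regular representation $\lambda:=L^A\colon A\to BL(A)$, $a\mapsto L_a^A$. By hypothesis $\lambda$ is a linear isometry, and left alternativity gives $L_{a^2}^A=(L_a^A)^2$; linearizing, $\lambda$ is a Jordan homomorphism of $A^{\rm sym}$ into $BL(A)^{\rm sym}$, so its image $\lambda(A)$ (norm-closed, by completeness of $A$ and isometry of $\lambda$) is a Jordan subalgebra of $BL(A)^{\rm sym}$. I then set $B:=\mathbb{C}I_A+\lambda(A)\subseteq BL(A)^{\rm sym}$, a norm-unital complete commutative normed complex algebra. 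Since $I_A\in B$, a norm-preserving Hahn--Banach extension of states (cf. \cite[Corollary 2.1.2]{CR}) identifies $V(B,I_A,T)=V(BL(A),I_A,T)$ for every $T\in B$; in particular $\lambda(LOH(A))\subseteq H(B,I_A)$. Using $A=LOH(A)\oplus iLOH(A)$ (which follows from Fact \ref{fact00}(iii), as $l$-$\Ann(A)=0$ here), one gets $B=\mathbb{C}I_A+\lambda(LOH(A))+i\lambda(LOH(A))$, so $B$ is the complex linear hull of $H(B,I_A)$, i.e. a $V$-algebra.

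By the non-associative Vidav--Palmer theorem (Theorem \ref{non-associativeVP}), $B$ is a unital non-commutative $JB^*$-algebra, hence, being commutative, a unital $JB^*$-algebra; its natural involution fixes $H(B,I_A)$ and therefore restricts on $\lambda(A)$ to the conjugate-linear map $L_{h_1}^A+iL_{h_2}^A\mapsto L_{h_1}^A-iL_{h_2}^A$ (with $h_j\in LOH(A)$), which corresponds under $\lambda$ to the natural involution $*$ of $A$. Thus $\lambda(A)$ is a norm-closed $*$-invariant Jordan subalgebra of the $JB^*$-algebra $B$, and $\lambda$ is an isometric Jordan $*$-isomorphism of $(A^{\rm sym},*)$ onto it; consequently $(A^{\rm sym},*)$ is a $JB^*$-algebra.

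The \emph{main obstacle} is the final passage from this Jordan-level conclusion to the full non-associative structure: one must upgrade ``$*$ is a Jordan involution of $A^{\rm sym}$'' to ``$*$ is an algebra involution of $A$''. Once this is achieved, $A$ is a left alternative $*$-algebra, hence alternative, and thereby a (flexible, Jordan-admissible) non-commutative Jordan algebra whose symmetrization is $JB^*$; by \cite{R91} $A$ is then a non-commutative $JB^*$-algebra, which, being alternative, is an alternative $C^*$-algebra by \cite[Fact 3.3.2]{CR} (and nonzero, since $A\neq 0$). I expect the algebra-involution step to be the delicate heart of the argument: it should follow by feeding the identity $L_{a^2}^A=(L_a^A)^2$ into the $JB^*$-involution identity $(x\bullet y)^*=x^*\bullet y^*$ and exploiting the isometry $\|L_a^A\|=\|a\|$, but controlling the non-commutative associator terms is exactly where care is required, and where the refinement over \cite[Theorem 4.20]{R99} must be earned.
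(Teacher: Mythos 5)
Your reduction to (iv)$\Rightarrow$(i) and the first two thirds of that implication match the paper's proof almost verbatim: the same cycle (i)$\Rightarrow$(ii)$\Rightarrow$(iii)$\Rightarrow$(iv) handled by Corollary \ref{alternativeunit-freeVP} and Fact \ref{fact00}, the same left regular representation $a\mapsto L_a^A$ made isometric by hypothesis, the same linearization of $L_{a^2}^A=(L_a^A)^2$ to get a Jordan homomorphism, the same unitization $B=\C I_A+\lambda(A)$ shown to be a $V$-algebra by the local behaviour of numerical ranges, and the same application of Theorem \ref{non-associativeVP} to conclude that $A^{\rm sym}$, with the natural involution $*$, is a $JB^*$-algebra. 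Up to that point your argument is correct and is the paper's argument.

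But there is a genuine gap at exactly the point you flag as the ``delicate heart'': you leave unproved the passage from ``$(A^{\rm sym},*)$ is a $JB^*$-algebra'' to the full conclusion, and moreover you have the logical order of that passage backwards. You propose to first upgrade $*$ to an algebra involution of $A$ by hand (``feeding $L_{a^2}^A=(L_a^A)^2$ into $(x\bullet y)^*=x^*\bullet y^*$ and controlling associator terms''), then deduce alternativity, and only then apply \cite{R91} to an algebra already known to be non-commutative Jordan. No such preliminary upgrade is needed, and you give no actual proof of it: the result of \cite{R91} (cf. \cite[Definition 3.5.29 and Proposition 3.5.31]{CR}) applies to \emph{any} complete normed complex algebra $A$ admitting a conjugate-linear vector space involution $*$ making $(A^{\rm sym},*)$ a $JB^*$-algebra --- this is precisely the notion of $JB^*$-admissibility --- and it directly yields that $(A,*)$ is a non-commutative $JB^*$-algebra. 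That $*$ is an algebra involution of $A$ is then a \emph{byproduct} of this citation, not a prerequisite for it; from it one gets that $A$ is a left alternative $*$-algebra, hence alternative (as the paper notes before the theorem, left alternative $*$-algebras are alternative), and hence an alternative $C^*$-algebra by \cite[Fact 3.3.2]{CR}. As written, your proof is incomplete: the step you defer is the only nontrivial one remaining, your sketched strategy for it is not a proof and is not obviously workable, and the refinement over \cite[Theorem 4.20]{R99} is ``earned'' in the paper not by new associator estimates but by invoking \cite{R91} in the correct order.
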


\begin{proof}
(i)$\Rightarrow $(ii) By the easy part of Corollary \ref{alternativeunit-freeVP}.

(ii)$\Rightarrow $(iii) This is clear.

(iii)$\Rightarrow $(iv) By assertion (ii) in Fact \ref{fact00}.

(iv)$\Rightarrow $(i) Suppose that $A$ is a left alternative complete left operator $V$-algebra such that $\|L_a^A\|=\|a\|$ for every $a\in A$.
Let $\Phi :A\to BL(A)$ be defined by $\Phi (a):=L_a^A$ for every $a\in A$. Then clearly $\Phi $ is a linear isometry. On the other hand, by the definition itself of left alternative algebras, for every $a\in A$ we have $L_{a^2}^A=(L_a^A)^{2}$, and hence $\Phi $ is a Jordan homomorphism.  It follows that the set
\[
L:=\{\lambda I_A+\Phi (a):(\lambda ,a)\in \C \times A\}
\]
 is a closed Jordan subalgebra of $BL(A)$. Moreover, since $A$ is a left operator $V$-algebra, it follows from the local behaviour of numerical ranges \cite[Corollary 2.1.2]{CR} that $L$
is a $V$-algebra, and that $\Phi $, regarded now as a mapping from $A$ to $L$, is a $*$-mapping when we endow $A$ and $L$ with their respective natural involutions. In this way,  $A^{\rm sym}$ and $\Phi (A)$ are identified as Banach spaces, as Jordan algebras, and as involutive complex vector spaces.
But, by the non-associative Vidav--Palmer Theorem \ref{non-associativeVP}, $L$ is a $JB^*$-algebra. Therefore, since $\Phi (A)$ is a $JB^*$-algebra (as it is a closed $*$-invariant subalgebra of $L$), we conclude that  $A^{\rm sym}$ is a $JB^*$-algebra.
It follows that $A$ is a $JB^*$-admissible algebra in the sense of \cite[Definition 3.5.29]{CR}, and hence, by \cite{R91} (cf. \cite[Proposition 3.5.31]{CR}), $A$ is a non-commutative $JB^*$-algebra. As a byproduct, $A$ is a left alternative $*$-algebra, and hence $A$ is alternative. But, as we already know, non-commutative $JB^*$-algebras which are alternative are precisely the alternative $C^*$-algebras.
\end{proof}

The equivalence (i)$\Leftrightarrow $(ii) in Theorem \ref{finalfinal} was previously proved in Corollary \ref{alternativeunit-freeVP}. Anyway, the proof immediately above provides us with a new proof of that equivalence, whose method is completely different from that applied in the proof of Corollary \ref{alternativeunit-freeVP}.

The next proposition follows straightforwardly from Theorem \ref{finalfinal}.

\begin{proposition} \label{finalfinalbis}
For a complete normed alternative complex algebra $A$, the following conditions are equivalent:
\begin{enumerate}
\item[\rm (i)] $A$ is a nonzero alternative $C^*$-algebra.
\item[\rm (ii)] $A$ is a left operator $V$-algebra having a right approximate identity bounded by $1$ $($endowing $A$ with its natural involution$)$.
\item[\rm (iii)] $A$ is a left operator $V$-algebra such that the equality $\|L_a^A\|=\|a\|$ holds for every $a\in A$ $($endowing $A$ with its natural involution$)$.
\end{enumerate}
\end{proposition}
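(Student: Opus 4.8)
The plan is to deduce the proposition directly from Theorem \ref{finalfinal}, exploiting the standing hypothesis of this statement that $A$ is alternative, hence in particular left alternative. Under this hypothesis, the three conditions listed here are literally special cases of three of the four conditions of Theorem \ref{finalfinal}, so no genuinely new argument is required; the work is entirely a matter of matching up the conditions.

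First I would record that condition (i) of the proposition coincides verbatim with condition (i) of Theorem \ref{finalfinal}. Next I would match the remaining two. Since every alternative algebra is left alternative, condition (ii) of the proposition --- that $A$ is a left operator $V$-algebra having a right approximate identity bounded by $1$ --- combined with the standing alternativity of $A$, is exactly condition (iii) of Theorem \ref{finalfinal}. Likewise, condition (iii) of the proposition --- that $A$ is a left operator $V$-algebra with $\|L_a^A\|=\|a\|$ for every $a\in A$ --- becomes, under the same standing hypothesis, condition (iv) of Theorem \ref{finalfinal}. In both cases the natural involution is available and the parenthetical remarks are justified exactly as in Theorem \ref{finalfinal}: a right approximate identity forces $l$-$\Ann (A)=0$ by Fact \ref{fact00}(i), while the isometric identity $\|L_a^A\|=\|a\|$ forces $L_a^A=0\Rightarrow a=0$, i.e. $l$-$\Ann (A)=0$, directly.

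Having made these identifications, I would simply invoke the equivalences (i)$\Leftrightarrow$(iii)$\Leftrightarrow$(iv) of Theorem \ref{finalfinal}, which translate at once into the equivalences (i)$\Leftrightarrow$(ii)$\Leftrightarrow$(iii) of the proposition. I do not expect any real obstacle here. The only point deserving a moment's attention is that in passing from (ii) or (iii) of the proposition back to (i) one must not lose the alternativity of $A$; but this causes no difficulty, since alternativity is assumed of $A$ from the outset in the proposition (whereas in Theorem \ref{finalfinal} it is derived), so it is automatically preserved throughout. This is precisely why the proposition follows straightforwardly, rather than requiring the full force of the implication (iv)$\Rightarrow$(i) of Theorem \ref{finalfinal} to be re-run.
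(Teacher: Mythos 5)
Your proposal is correct and coincides with the paper's own proof: the paper disposes of this proposition with the single remark that it ``follows straightforwardly from Theorem \ref{finalfinal}'', which is exactly your specialization argument, matching conditions (i), (ii), (iii) of the proposition with conditions (i), (iii), (iv) of Theorem \ref{finalfinal} via the observation that alternative algebras are left alternative. Your additional care about the parenthetical involution remarks (via Fact \ref{fact00}(i) and the isometric identity forcing $l$-$\Ann(A)=0$) is accurate and only makes explicit what the paper leaves implicit.
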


We note that the specialization of the above proposition in the associative context  provides us with a refinement of Theorem \ref{unit-freeVP}. In particular we are provided with the following.

\begin{corollary} \label{unit-freeVPbis}
Nonzero $C^*$-algebras are precisely the complete associative left operator $V$-algebras having a right approximate identity bounded by $1$, endowed with their natural involutions.
\end{corollary}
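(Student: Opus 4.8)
The plan is to obtain this statement as a direct specialization of Proposition \ref{finalfinalbis} to the associative setting, so that essentially no new analytic work is required; the entire substantive burden has already been carried by Theorem \ref{finalfinal}. First I would observe that every associative algebra is in particular alternative (the left and right alternative identities are immediate consequences of associativity). Hence a complete associative complex algebra $A$ is a fortiori a complete alternative complex algebra, and Proposition \ref{finalfinalbis} applies to it verbatim.

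Next I would invoke the equivalence (i)$\Leftrightarrow$(ii) of that proposition: for such an $A$, being a nonzero alternative $C^*$-algebra is equivalent to being a left operator $V$-algebra having a right approximate identity bounded by $1$, endowed with its natural involution. Since the hypotheses of the corollary already fix $A$ to be complete and associative, condition (ii) of Proposition \ref{finalfinalbis} coincides exactly with the description appearing on the right-hand side of the corollary.

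The only remaining point---and it is purely a matter of unwinding definitions---is to identify, in the associative context, the notion of a ``nonzero alternative $C^*$-algebra'' with that of a ``nonzero $C^*$-algebra''. This follows because an alternative $C^*$-algebra is by definition a complete normed alternative complex $*$-algebra satisfying the Gelfand--Naimark axiom $\|a^*a\|=\|a\|^2$; when the underlying algebra is moreover associative, this is precisely the definition of a $C^*$-algebra. Conversely, a $C^*$-algebra is associative, hence alternative, and trivially satisfies the same axiom, so it is an associative alternative $C^*$-algebra. With this identification in hand, the equivalence (i)$\Leftrightarrow$(ii) of Proposition \ref{finalfinalbis} becomes exactly the assertion of the corollary. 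I do not anticipate any genuine obstacle here: the real content lies in Theorem \ref{finalfinal}, of which this corollary is merely the immediate associative shadow.
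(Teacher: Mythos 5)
Your proposal is correct and takes essentially the same route as the paper, which likewise obtains the corollary as the associative specialization of Proposition \ref{finalfinalbis} (associativity implying alternativity, and associative alternative $C^*$-algebras being precisely $C^*$-algebras). The paper additionally records a second, ``autonomous'' proof via the left regular representation $a\mapsto L_a^A$ and the associative Vidav--Palmer Theorem \ref{VP}, but that is offered as a supplement; its primary derivation of the corollary is exactly your specialization argument.
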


To reach the above corollary we have applied the non-associative Vidav--Palmer Theorem \ref{non-associativeVP} and other nontrivial results of the theory of non-commutative $JB^*$-algebras (see the proof of Theorem \ref{finalfinal}). Nevertheless, as we show immediately below,  Corollary \ref{unit-freeVPbis} has an autonomous proof involving only associative arguments.

\begin{proof}[Autonomous proof of Corollary \ref{unit-freeVPbis}]
Let $A$ be a complete associative left operator $V$-algebra having a right approximate identity bounded by $1$. Let $\Phi :A\to BL(A)$ be defined by $\Phi (a):=L_a^A$ for every $a\in A$. Then, by Fact \ref{fact00}(ii), $\Phi $ is a linear isometry. On the other hand, since $A$ is associative, $\Phi $ is an algebra homomorphism.  It follows that the set
\[
L:=\{\lambda I_A+\Phi (a):(\lambda ,a)\in \C \times A\}
\]
 is a closed subalgebra of $BL(A)$. Moreover, since $A$ is a left operator $V$-algebra, $L$
is a $V$-algebra, and $\Phi $, regarded now as a mapping from $A$ to $L$, is a $*$-mapping when we endow $A$ and $L$ with their respective natural involutions. In this way, $A$ and $\Phi (A)$ are identified as Banach spaces, as associative algebras, and as involutive complex vector spaces.
But, by the associative Vidav--Palmer Theorem \ref{VP}, $L$ is a $C^*$-algebra. Therefore, since $\Phi (A)$ is a $C^*$-algebra (as it is a closed $*$-invariant subalgebra of $L$), we conclude that $A$ is a $C^*$-algebra.
\end{proof}

Another straightforward consequence of Proposition \ref{finalfinalbis} deserving some emphasis is the following.

\begin{corollary} \label{unit-freeVPtris}
Unital $C^*$-algebras $($respectively, unital alternative $C^*$-al\-gebras$)$ are precisely the complete associative $($respectively, alternative$)$ left operator $V$-algebras having a norm-one right unit, endowed with their natural involutions.
\end{corollary}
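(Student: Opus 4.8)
The plan is to deduce this corollary from the two previously established unit-free theorems, Corollary \ref{unit-freeVPbis} in the associative case and Proposition \ref{finalfinalbis} in the alternative case, the only genuinely new ingredient being the passage from a one-sided norm-one unit to a two-sided unit. First I would dispose of the easy implication. If $A$ is a unital $C^*$-algebra (respectively, a unital alternative $C^*$-algebra), then $A$ is complete and associative (respectively, alternative), its unit ${\bf 1}$ has norm $1$ and is in particular a norm-one right unit, and by the easy part of Theorem \ref{unit-freeVP} (respectively, Corollary \ref{alternativeunit-freeVP}) $A$ is a left operator $V$-algebra whose natural involution coincides with the given $C^*$-involution. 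This settles one direction.

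For the converse, suppose that $A$ is a complete associative (respectively, alternative) left operator $V$-algebra possessing a norm-one right unit $u$, that is, $au=u$ wait $au=a$ for every $a\in A$ and $\|u\|=1$. Then the constant net $(u)_\lambda$ is a right approximate identity bounded by $1$, so that Corollary \ref{unit-freeVPbis} (respectively, Proposition \ref{finalfinalbis}) applies and shows that $A$ is a $C^*$-algebra (respectively, an alternative $C^*$-algebra), its Gelfand--Naimark involution being precisely the natural involution $*$ of $A$. At this point $A$ is already known to be a $C^*$-algebra of the required type; what remains is only to check that it is unital.

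It thus remains to upgrade $u$ to a two-sided unit. Applying $*$ to the identity $au=a$ and using that $*$ is an algebra involution, so that $(au)^*=u^*a^*$, we obtain $u^*a^*=a^*$ for every $a\in A$; since $*$ is bijective this says that $u^*$ is a left unit. In particular $u^*u=u^*$, and taking $*$ (the element $u^*u$ being self-adjoint) yields $u^*u=u$, whence $u=u^*$. Therefore $u$ is simultaneously a right unit and, being equal to the left unit $u^*$, a left unit, so $u$ is a two-sided unit and $A$ is a unital $C^*$-algebra (respectively, unital alternative $C^*$-algebra) endowed with its natural involution. The main obstacle is really just this final self-adjointness computation, since it is the one step not handed to us verbatim by the earlier results; everything else reduces to recognizing a norm-one right unit as a bounded right approximate identity and citing Corollary \ref{unit-freeVPbis} and Proposition \ref{finalfinalbis}.
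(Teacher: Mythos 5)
Your proposal is correct and follows exactly the route the paper intends: the paper states Corollary \ref{unit-freeVPtris} as a ``straightforward consequence'' of Proposition \ref{finalfinalbis} (via Corollary \ref{unit-freeVPbis} in the associative case) without spelling out details, and your argument---viewing the norm-one right unit $u$ as a constant right approximate identity bounded by $1$, invoking those results to get the $C^*$-structure with the natural involution, and then using $(au)^*=u^*a^*$ to show $u^*$ is a left unit and $u=u^*u=u^*$ is a two-sided unit---is precisely the natural filling-in, carried out in the correct order (the involution is only used as an algebra involution after the $C^*$-structure is established). The only blemish is the stray self-correction ``$au=u$ wait $au=a$'' in the definition of a right unit, which should read simply $au=a$ for every $a\in A$.
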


We do not know if Proposition \ref{finalfinalbis} remains true with ``possibly non-asso\-ciative" instead of ``alternative" at the beginning, and ``non-commutative $JB^*$-algebra" instead of ``alternative $C^*$-algebra" in condition (i), nor even if we replace ``alternative" with ``non-commutative Jordan" at the beginning, if we replace ``alternative $C^*$-algebra" with ``non-commutative $JB^*$-algebra" in condition (i), and if we erase condition (iii).

\section{Beginning a general theory of left operator $V$-algebras}

Left operator $V$-algebras have played a central role in this paper, so one is tempted to tray to develop a general theory for them. As a first sample in this line, we are provided with the following.

\begin{proposition} \label{lunes}
Let $A$ be a complete left operator $V$-algebra with zero left annihilator. Then the involution of $A$ is continuous.
\end{proposition}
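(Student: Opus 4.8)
The plan is to invoke the closed graph theorem, for which the completeness of $A$ is essential. First I would record the decomposition we have available: since $l$-$\Ann (A)=0$, Fact \ref{fact00}(iii) gives $LOH(A)\cap iLOH(A)=0$, while the defining property of a left operator $V$-algebra (Definition \ref{definition1}) says that $A$ is the complex linear hull of $LOH(A)$, i.e. $A=LOH(A)+iLOH(A)$. Hence $A=LOH(A)\oplus iLOH(A)$ as a direct sum of real subspaces, and the natural involution $*$ (in the sense extended to the hypothesis $l$-$\Ann (A)=0$, cf. the discussion after Definition \ref{definition2}) is the conjugate-linear map determined by $(h+ik)^*=h-ik$ for $h,k\in LOH(A)$.

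The key observation is that \emph{both} summands are closed: $LOH(A)$ is closed by Definition \ref{definition1}, and $iLOH(A)$ is its image under the linear isometry $a\mapsto ia$, hence closed as well. With this in hand I would verify that $*$, viewed as a real-linear map on the real Banach space underlying $A$, has closed graph. Suppose $a_n\to a$ and $a_n^*\to b$ in $A$. Then
\[
\tfrac{1}{2}(a_n+a_n^*)\in LOH(A)\ \text{ converges to }\ \tfrac{1}{2}(a+b)\in LOH(A),
\]
and likewise
\[
\tfrac{1}{2}(a_n-a_n^*)\in iLOH(A)\ \text{ converges to }\ \tfrac{1}{2}(a-b)\in iLOH(A),
\]
both limits lying in the respective summands by closedness. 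Thus $a=\tfrac{1}{2}(a+b)+\tfrac{1}{2}(a-b)$ exhibits the (unique) decomposition of $a$ relative to $A=LOH(A)\oplus iLOH(A)$, whence $a^*=\tfrac{1}{2}(a+b)-\tfrac{1}{2}(a-b)=b$. So the graph of $*$ is closed, and the closed graph theorem yields that $*$ is continuous.

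The argument is essentially soft, being a closed-graph computation. The only point needing care is that the direct sum decomposition $A=LOH(A)\oplus iLOH(A)$ is genuinely available here under the sole hypothesis $l$-$\Ann (A)=0$, rather than under the existence of a right approximate identity as in Lemma \ref{4.100}(ii); this is exactly what Fact \ref{fact00}(iii) secures. I expect no obstacle beyond this bookkeeping, and in particular no quantitative estimate on the involution is needed for the qualitative statement of continuity.
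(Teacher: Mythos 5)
Your proof is correct and takes essentially the same route as the paper: the paper's one-sentence proof observes that, by completeness of $A$ and closedness of $LOH(A)$, the algebraic direct sum $A=LOH(A)\oplus iLOH(A)$ is automatically topological, and your explicit closed-graph computation (applied to the involution, viewed as a real-linear map) is precisely the standard argument behind that fact. Your care in securing the decomposition under the sole hypothesis $l$-$\Ann (A)=0$ via Fact \ref{fact00}(iii) matches what the paper arranges in the discussion following Definition \ref{definition2}.
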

\begin{proof}
Since $LOH(A)$ is a closed real subspace of $A$, it follows from the completeness of $A$ that the direct sum $A=LOH(A)\oplus iLOH(A)$ is topological.
\end{proof}

Now we are going to deal with stability properties of the class of all left operator $V$-algebras.

The following fact follows straightforwardly from \cite[Lemma 2.2.24]{CR}.

\begin{fact} \label{factsubalgebra}
Let $A$ be a normed algebra over $\K$, and let $B$ be a nonzero subalgebra of $A$. Then for $b\in B$ we have $LOV(B,b)\subseteq LOV(A,b)$.
Therefore, if $\K =\C$, then  $LOH(A)\cap B\subseteq LOH(B)$.
\end{fact}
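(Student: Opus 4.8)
The statement asserts that for a subalgebra $B$ of a normed algebra $A$, left operator numerical ranges shrink when passing from $A$ to $B$, with the hermitian-element consequence in the complex case. Let me sketch how I would prove it.

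The plan is to reduce everything to the behaviour of left-multiplication operators under restriction, and then invoke the cited general fact \cite[Lemma 2.2.24]{CR} about numerical ranges and subspaces. First I would record the crucial algebraic observation: if $B$ is a subalgebra of $A$ and $b\in B$, then $B$ is invariant under $L_b^A$, and the restriction $(L_b^A)_{|B}$ (viewed as an operator on $B$) is exactly $L_b^B$. This is immediate from the definition of a subalgebra, since $bc\in B$ for all $c\in B$. Thus $L_b^B$ is the compression of $L_b^A$ to the invariant subspace $B\subseteq A$.

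Next I would translate this into numerical-range language. By definition $LOV(B,b)=V(BL(B),I_B,L_b^B)$ and $LOV(A,b)=V(BL(A),I_A,L_a^A)$. The operator $L_b^B=(L_b^A)_{|B}$ is the restriction of $L_b^A$ to the closed invariant subspace $B$, and $I_B=(I_A)_{|B}$. Lemma 2.2.24 of \cite{CR} precisely handles how the numerical range of a restricted operator relates to that of the original operator; applying it with the pair $(BL(A),I_A)$ and the subspace $B$, one obtains the inclusion
\[
V(BL(B),I_B,(L_b^A)_{|B})\subseteq V(BL(A),I_A,L_b^A),
\]
which is exactly $LOV(B,b)\subseteq LOV(A,b)$. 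This is the first assertion, and the step where I invoke the cited lemma is really the whole content.

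For the second assertion, suppose $\K=\C$ and let $b\in LOH(A)\cap B$. By definition $b\in LOH(A)$ means $L_b^A\in H(BL(A),I_A)$, i.e. $LOV(A,b)=V(BL(A),I_A,L_b^A)\subseteq\R$. Combining this with the inclusion just proved gives $LOV(B,b)\subseteq LOV(A,b)\subseteq\R$, so $L_b^B\in H(BL(B),I_B)$, which says precisely that $b\in LOH(B)$. Hence $LOH(A)\cap B\subseteq LOH(B)$. I do not expect any serious obstacle here: the only delicate point is correctly matching the hypotheses of \cite[Lemma 2.2.24]{CR} (in particular that $B$ is a \emph{closed} invariant subspace, or that the numerical range is computed with respect to the restricted unit $I_B$), and verifying that the compression identity $L_b^B=(L_b^A)_{|B}$ holds on the nose. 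Both are routine, so the bulk of the work is citing the right general principle rather than any new computation.
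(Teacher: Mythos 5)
Your proof is correct and is essentially the paper's own argument: the paper gives no proof beyond the citation of \cite[Lemma 2.2.24]{CR}, and your unpacking --- the compression identity $L_b^B=(L_b^A)_{|B}$ on the invariant subspace $B$, the resulting inclusion $V(BL(B),I_B,(L_b^A)_{|B})\subseteq V(BL(A),I_A,L_b^A)$, and the reality argument for $LOH(A)\cap B\subseteq LOH(B)$ --- is exactly the intended straightforward deduction. Two harmless remarks: closedness of $B$ is not needed (and the Fact assumes none, since the restriction map $T\mapsto T_{|B}$ is a unital contraction into $BL(B)$ for any nonzero subspace $B$, after which Hahn--Banach extension of states does the work), and in your display for $LOV(A,b)$ the operator should read $L_b^A$ rather than $L_a^A$.
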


With the above fact in mind, the next proposition is immediate.

\begin{proposition} \label{starsubalgebra}
Let $A$ be a left operator $V$-algebra with zero left annihilator, and let $B$ be a nonzero subalgebra of $A$ invariant under the natural involution of $A$. Then $B$ is a left operator $V$-algebra. Moreover, if in addition $B$ has zero left annihilator in itself, then the natural involution of $B$ coincides with the restriction to $B$ of the natural involution of $A$.
\end{proposition}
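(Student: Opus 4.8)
The plan is to exploit the internal direct sum decomposition $A=LOH(A)\oplus iLOH(A)$ supplied by the hypothesis $l$-$\Ann (A)=0$ (via Fact \ref{fact00}(iii) together with the defining property that $A$ is the complex linear hull of $LOH(A)$), combined with the cross-subalgebra comparison of hermitian elements recorded in Fact \ref{factsubalgebra}. Throughout write $*$ for the natural involution of $A$, whose set of fixed points is exactly $LOH(A)$, and recall that $*$ is conjugate-linear.

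First I would prove that $B$ is a left operator $V$-algebra. Let $b\in B$ be arbitrary. Since $B$ is invariant under $*$, the element $b^*$ lies in $B$, and hence so do $h:=\frac{1}{2}(b+b^*)$ and $k:=\frac{1}{2i}(b-b^*)$. Both $h$ and $k$ are $*$-fixed, so $h,k\in LOH(A)\cap B$. By Fact \ref{factsubalgebra} we have $LOH(A)\cap B\subseteq LOH(B)$, whence $h,k\in LOH(B)$. As $b=h+ik$, every element of $B$ lies in the complex linear hull of $LOH(B)$; since the reverse inclusion is trivial, $B$ equals that hull, which is precisely the assertion that $B$ is a left operator $V$-algebra. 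Note that no completeness of $B$ is used here.

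For the \emph{moreover} clause I would use the extra hypothesis $l$-$\Ann (B)=0$, which by Fact \ref{fact00}(iii) applied to $B$ (together with the first part, that $B$ is a left operator $V$-algebra) makes $B=LOH(B)\oplus iLOH(B)$ a genuine internal direct sum. Denote by $\star$ the natural involution of $B$, characterized by fixing $LOH(B)$ pointwise. For $b\in B$ the decomposition $b=h+ik$ produced above has $h,k\in LOH(B)$, so it \emph{is} the decomposition of $b$ along $LOH(B)\oplus iLOH(B)$, and by uniqueness $\star(b)=h-ik$. On the other hand, $h$ and $k$ are $*$-fixed, so conjugate-linearity of $*$ gives $b^{*}=*(h)+\overline{i}\,{*}(k)=h-ik$ as well. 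Hence $\star(b)=b^{*}$ for all $b\in B$, i.e. $\star$ is the restriction of $*$ to $B$.

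Once Fact \ref{factsubalgebra} is in hand the argument is essentially bookkeeping, and I do not expect a genuine obstacle. The only step I expect to require a little care is the \emph{moreover} part: one must check that the components $h,k\in LOH(A)\cap B$ manufactured from $*$ are really the components of $b$ in the \emph{internal} $LOH(B)\oplus iLOH(B)$ splitting, so that the uniqueness defining $\star$ may be invoked. This is immediate from $LOH(A)\cap B\subseteq LOH(B)$, so the two involutions are forced to agree on the $LOH(B)$-part and on the $iLOH(B)$-part separately.
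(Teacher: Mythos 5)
Your proof is correct and is precisely the argument the paper intends: the paper offers no written proof, declaring the proposition ``immediate'' from Fact \ref{factsubalgebra}, and your decomposition $b=\tfrac{1}{2}(b+b^*)+i\,\tfrac{1}{2i}(b-b^*)$ with $\tfrac{1}{2}(b+b^*),\tfrac{1}{2i}(b-b^*)\in LOH(A)\cap B\subseteq LOH(B)$ simply spells that out. Your careful handling of the \emph{moreover} clause --- invoking Fact \ref{fact00}(iii) for $B$ to get $LOH(B)\cap iLOH(B)=0$ and then matching the two involutions via uniqueness of the splitting $B=LOH(B)\oplus iLOH(B)$ --- is exactly the bookkeeping the paper leaves to the reader.
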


\begin{corollary} \label{vaya}
Let $A$ and $B$ be nonzero normed algebras over $\K$, and let \linebreak $\phi :A\to B$ be a contractive algebra homomorphism. Suppose that $A$ has an approximate identity bounded by $1$. Then for $a\in A$ we have
\[
LOV(B,\phi (a))\subseteq LOV(A,a).
\]
Therefore $\phi (LOH(A))\subseteq LOH(B)$.
\end{corollary}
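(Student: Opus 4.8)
The plan is to read Corollary~\ref{vaya} as the contractive\,/\,homomorphism counterpart of Lemma~\ref{4.2bis}, which is already tailor-made for this kind of statement: that lemma produces exactly an inclusion of the form $LOV(A_2,\phi(x))\subseteq LOV(A_1,x)$ for a contractive linear map sending one bounded-by-$1$ approximate identity onto another. Since $\phi$ is in particular contractive and linear, the entire proof reduces to manufacturing the one hypothesis of Lemma~\ref{4.2bis} that is not handed to us, namely that the net $\phi(a_\lambda)$ is an approximate identity bounded by $1$ in the algebra against which $LOV(B,\cdot)$ is being computed. So I would first isolate that point and then let Lemma~\ref{4.2bis} do the rest.

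Concretely, let $a_\lambda$ be the given approximate identity of $A$ bounded by $1$ and put $b_\lambda:=\phi(a_\lambda)$. Contractivity gives $\|b_\lambda\|\le\|a_\lambda\|\le 1$. Because $\phi$ is a \emph{homomorphism} and is continuous, for every $x\in A$ we get $b_\lambda\,\phi(x)=\phi(a_\lambda x)\to\phi(x)$ and $\phi(x)\,b_\lambda=\phi(x a_\lambda)\to\phi(x)$, so $b_\lambda$ is a two-sided approximate identity on the subalgebra $\phi(A)$; the uniform bound $\|b_\lambda\|\le 1$ then promotes this, by a routine $\varepsilon/3$ estimate, to a two-sided approximate identity bounded by $1$ on the closure $\overline{\phi(A)}$. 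Feeding this into Lemma~\ref{4.2bis} (with $b_\lambda$ in the role of $a_{2,\lambda}$) yields $LOV(B,\phi(a))\subseteq LOV(A,a)$ for every $a\in A$. The displayed ``Therefore'' is then immediate: by definition $h\in LOH(A)$ means $LOV(A,h)=V(BL(A),I_A,L_h^A)\subseteq\R$, and the inclusion just obtained forces $LOV(B,\phi(h))\subseteq\R$, i.e. $\phi(h)\in LOH(B)$; hence $\phi(LOH(A))\subseteq LOH(B)$.

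The step I expect to be the crux is precisely this transfer of the approximate identity. Lemma~\ref{4.2bis} is phrased for a net that already lives in the codomain and equals $\phi$ of the given net, so one must be sure that $\phi(a_\lambda)$ genuinely functions as an approximate identity (bounded by~$1$) for the algebra in which $LOV(B,\cdot)$ is evaluated. The homomorphism identity $\phi(a_\lambda x)=\phi(a_\lambda)\phi(x)$ supplies this on $\phi(A)$, and the uniform bound carries it over to $\overline{\phi(A)}$; the only genuinely delicate point is the passage to all of $B$, which is harmless exactly when $\phi$ has dense range (as in the quotient maps and analogous homomorphisms to which the corollary is meant to be applied). Everything else—the bound, the homomorphism computation, and the final deduction from real numerical ranges—is routine once this approximate-identity transfer is in place.
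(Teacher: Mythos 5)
Your reduction is essentially the paper's own argument: the paper likewise observes that $\phi(a_\lambda)$ is an approximate identity bounded by $1$ in $\phi(A)$ and then combines Lemma~\ref{4.2bis} with Fact~\ref{factsubalgebra}. But the step you yourself flag as delicate --- passing from (the closure of) the range up to all of $B$ --- is a genuine gap, in your write-up and in the paper alike. Lemma~\ref{4.2bis}, applied to the co-restriction $\phi\colon A\to\phi(A)$ (or, after your $\varepsilon/3$ argument, $\phi\colon A\to\overline{\phi(A)}$), only yields $LOV(\overline{\phi(A)},\phi(a))\subseteq LOV(A,a)$; your sentence ``feeding this into Lemma~\ref{4.2bis} yields $LOV(B,\phi(a))\subseteq LOV(A,a)$'' is unjustified, because $\phi(a_\lambda)$ is an approximate identity of $\overline{\phi(A)}$, not of $B$. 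To finish one would need $LOV(B,\phi(a))\subseteq LOV(\phi(A),\phi(a))$. The paper asserts exactly this inclusion; since its other inclusion $LOV(\phi(A),\phi(a))\subseteq LOV(A,a)$ is what Lemma~\ref{4.2bis} actually delivers, the problematic one can only be meant to rest on Fact~\ref{factsubalgebra} (the two citations appear interchanged) --- but that fact gives the \emph{opposite} containment $LOV(\phi(A),\phi(a))\subseteq LOV(B,\phi(a))$: restricting $L_{\phi(a)}^B$ to the invariant subspace $\phi(A)$ shrinks numerical ranges, it does not enlarge them. So your proof, as you suspected, establishes the corollary only when $\phi$ has dense range, and the paper's proof establishes no more.

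In fact no argument can close the gap, because the corollary as stated is false. Take $A=\C$, let $B=BL(\ell_1^2)$ be the $2\times 2$ complex matrices acting on $\C^2$ with the $\ell_1$-norm (so the operator norm is the maximum column $\ell_1$-norm), and set $\phi(\lambda):=\lambda e$ with $e=\left(\begin{smallmatrix} 1 & 1 \\ 0 & 0 \end{smallmatrix}\right)$. Then $e^2=e$ and $\|e\|=1$, so $\phi$ is a contractive algebra homomorphism, and $A$ has the constant approximate identity $1$ bounded by $1$. Since $B$ is norm-unital, $LOV(B,e)=V(B,{\bf 1},e)$ (cf.\ Lemma~\ref{4.100}(i)); taking $x=(0,1)^{T}\in\ell_1^2$ and $f=(i,1)\in(\ell_1^2)'$, which satisfy $\|x\|=\|f\|=f(x)=1$, the state $T\mapsto f(Tx)$ shows $f(ex)=i\in LOV(B,\phi(1))$, whereas $LOV(A,1)=\{1\}$. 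Thus both conclusions fail: $LOV(B,\phi(1))\not\subseteq LOV(A,1)$, and $\phi(1)\notin LOH(B)$ although $1\in LOH(A)$. Your argument is therefore the correct proof of the correct statement --- namely $LOV(\overline{\phi(A)},\phi(a))\subseteq LOV(A,a)$, equivalently the corollary under the extra hypothesis that $\phi$ has dense range --- which covers the paper's applications to automorphisms (Proposition~\ref{estoycansad}) and to quotient-type maps, but not, as stated, the derivation of Corollary~\ref{Peralta}, where $\phi$ need not have dense range.
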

\begin{proof}
Let $a_\lambda $ be the approximate identity bounded by $1$ in $A$ whose existence has been assumed. Then
$\phi (A)$ is a subalgebra of $B$, and $\phi (a_\lambda )$ is an approximate identity bounded by $1$ in $\phi (A)$. Therefore, by Lemma \ref{4.2bis}, we have $LOV(B,\phi (a))\subseteq LOV(\phi (A),\phi (a))$.
But, by Fact \ref{factsubalgebra}, the inclusion $LOV(\phi (A),\phi (a))\subseteq LOV(A,a)$ holds.
\end{proof}

\begin{remark} \label{easypart}
According to the easy part of Theorem \ref{unit-freenon-associativeVP}, non-commutative $JB^*$-algebras are  left operator $V$-algebras having approximate identities bounded by $1$, and their involutions as non-commutative $JB^*$-algebras coincide their natural involutions as left operator $V$-algebras with zero left annihilator.
\end{remark}

Now, considering Corollary \ref{vaya} and Remark \ref{easypart}, we obtain the following.

\begin{corollary} \label{Peralta}
{\rm \cite{EPR} (cf. \cite[Proposition 5.9.3]{CRbis})}
Contractive algebra homomorphisms between non-commu\-tative $JB^*$-algebras are $*$-mappings.
\end{corollary}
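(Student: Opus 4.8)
The plan is to reduce the statement to a direct application of Corollary \ref{vaya}, combined with the identification of natural involutions furnished by Remark \ref{easypart}. Let $A$ and $B$ be non-commutative $JB^*$-algebras and let $\phi : A\to B$ be a contractive algebra homomorphism; the goal is to show that $\phi(a^*)=\phi(a)^*$ for every $a\in A$, where $*$ denotes the involution in each algebra. First I would invoke Remark \ref{easypart} to regard both $A$ and $B$ as left operator $V$-algebras having approximate identities bounded by $1$, and---crucially---to identify each $JB^*$-involution with the corresponding natural involution as a left operator $V$-algebra. In particular, $LOH(A)$ and $LOH(B)$ are precisely the sets of self-adjoint elements of $A$ and $B$.

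Next, since $A$ has an approximate identity bounded by $1$ and $\phi$ is a contractive algebra homomorphism, Corollary \ref{vaya} applies verbatim and yields $\phi(LOH(A))\subseteq LOH(B)$; that is, $\phi$ carries self-adjoint elements of $A$ to self-adjoint elements of $B$. This is the sole analytic input needed, and it is exactly the conclusion of Corollary \ref{vaya} specialized to the present $*$-algebras.

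Finally I would conclude by the standard hermitian-decomposition argument. By Lemma \ref{4.100}(ii), every $a\in A$ decomposes as $a=h+ik$ with $h,k\in LOH(A)$, so that $a^*=h-ik$. Applying the linear map $\phi$ and using that $\phi(h),\phi(k)\in LOH(B)$ are self-adjoint, one obtains
\[
\phi(a)^* = (\phi(h)+i\phi(k))^* = \phi(h)-i\phi(k) = \phi(h-ik) = \phi(a^*),
\]
which is the desired conclusion. I expect no serious obstacle here: the entire content is already packaged in Corollary \ref{vaya} and Remark \ref{easypart}, and the only point requiring genuine care is the identification of the $JB^*$-involution with the natural involution as a left operator $V$-algebra, which is precisely what Remark \ref{easypart} supplies. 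Everything else is linearity together with the hermitian decomposition.
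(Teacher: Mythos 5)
Your proof is correct and follows exactly the route the paper intends: its entire proof of Corollary \ref{Peralta} is the one-line remark that the result follows from Corollary \ref{vaya} together with Remark \ref{easypart}, and your write-up simply makes explicit the hermitian decomposition $a=h+ik$ and the identification of the $JB^*$-involution with the natural involution that this reduction tacitly uses. No gaps; the argument stands as written.
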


\begin{proposition} \label{estoycansad}
Let $A$ be a non-nilpotent complete normed complex algebra having an approximate identity bounded by $1$, and let $D$ be a continuous derivation of $A$ such that $0$ is not an interior point of $V(BL(A),I_A,D)$. Then there is a module-one  complex number $\alpha $ such that $\alpha D(LOH(A))\subseteq LOH(A)$.
\end{proposition}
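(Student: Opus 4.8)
The strategy is to mimic the proof of Proposition~\ref{propisition1}, replacing the one-parameter group of isometries attached to a hermitian operator by a one-parameter semigroup of contractive homomorphisms attached to a dissipative derivation. Since $A$ is non-nilpotent it is in particular nonzero, so $\|I_A\|=1$, $BL(A)$ is a norm-unital normed algebra, and $D\in BL(A)$ because $D$ is continuous; thus $K:=V(BL(A),I_A,D)$ is a well-defined compact convex subset of $\C$. First I would exploit the hypothesis that $0$ is not an interior point of $K$: by a standard supporting-hyperplane (equivalently Hahn--Banach separation) argument in $\C\cong\R^2$, there is a unimodular $\alpha\in\C$ such that $\mathrm{Re}(\alpha z)\le 0$ for every $z\in K$. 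Since $V(BL(A),I_A,\alpha D)=\alpha K$, this says precisely that $\alpha D$ is \emph{dissipative}. Note also that $\alpha D$ is again a continuous derivation of $A$, because multiplying a derivation by a scalar preserves the Leibniz rule.

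Next I would pass to the exponential semigroup $\{\exp(t\alpha D):t\ge 0\}\subseteq BL(A)$. On the one hand, dissipativity of $\alpha D$ gives $\|\exp(t\alpha D)\|\le 1$ for all $t\ge 0$ (this is the semigroup counterpart of the fact, used in Proposition~\ref{propisition1}, that hermitian operators exponentiate to isometries; cf.~\cite[Corollary 2.1.9]{CR} and \cite[Corollary 10.13]{BD}). On the other hand, because $\alpha D$ is a bounded derivation of the Banach algebra $A$, the binomial expansion
\[
(\alpha D)^n(ab)=\sum_{k=0}^{n}\binom{n}{k}(\alpha D)^k(a)\,(\alpha D)^{n-k}(b),
\]
valid without any associativity assumption since it uses only bilinearity of the product, together with the Cauchy product formula, shows that each $\exp(t\alpha D)$ is an algebra homomorphism of $A$ into itself. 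Thus every $\exp(t\alpha D)$ with $t\ge 0$ is a contractive algebra homomorphism from $A$ to $A$, and, since $A$ has an approximate identity bounded by $1$, Corollary~\ref{vaya} yields
\[
\exp(t\alpha D)(LOH(A))\subseteq LOH(A)\qquad(t\ge 0).
\]

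Finally I would differentiate at $t=0$. For $h\in LOH(A)$ and $t>0$ the element $\frac{1}{t}[\exp(t\alpha D)(h)-h]$ lies in the real subspace $LOH(A)$ (a difference of two of its elements, scaled by the positive real $1/t$), and
\[
\alpha D(h)=\lim_{t\to 0^{+}}\frac{1}{t}\bigl[\exp(t\alpha D)(h)-h\bigr];
\]
since $LOH(A)$ is closed, the limit stays in $LOH(A)$, giving $\alpha D(LOH(A))\subseteq LOH(A)$, as required. The conceptual heart of the argument, and the step I expect to be the main obstacle to set up correctly, is the reduction provided by Corollary~\ref{vaya}: the homomorphisms $\exp(t\alpha D)$ need \emph{not} fix the approximate identity $a_\lambda$, so Lemma~\ref{4.2bis} cannot be invoked verbatim as in Proposition~\ref{propisition1}. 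It is precisely the \textbf{homomorphism} property of $\exp(t\alpha D)$, rather than contractivity together with fixing $a_\lambda$, that allows Corollary~\ref{vaya} to replace that lemma and to carry $LOH(A)$ into itself.
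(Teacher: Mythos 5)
Your proposal is correct and follows essentially the same route as the paper: rotate $D$ so that $\alpha D$ becomes dissipative, observe that the exponentials $\exp(t\alpha D)$ ($t\ge 0$) are contractive algebra homomorphisms (the paper cites \cite[Corollary 10.13 and Proposition 18.7]{BD} for exactly this), apply Corollary \ref{vaya} to obtain $\exp(t\alpha D)(LOH(A))\subseteq LOH(A)$, and differentiate at $t=0$ as at the end of the proof of Proposition \ref{propisition1}. The only (harmless) deviation is that the paper first uses the non-nilpotency hypothesis, via the Aparicio--Rodr\'{\i}guez spectral result and the inclusion ${\rm sp}(BL(A),D)\subseteq V(BL(A),I_A,D)$, to place $0$ inside $K$ and then takes a supporting line at $0$, whereas you obtain the half-plane $\Re (\alpha K)\leq 0$ directly by separation (valid whether or not $0\in K$), so that in your argument non-nilpotency is only needed to guarantee $A\neq 0$.
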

\begin{proof}
Set $K:=V(BL(A),I_A,D)$. Then by \cite[Proposition 10.6]{BD} (see also \cite[Lemma 2.3.21]{CR}) and \cite{AR} (cf. \cite[Corollary 3.4.44(i)]{CR}), $0\in K$. Therefore, since $0$ is not an interior point of $K$, there is a module-one  complex number $\alpha $ such that $\Re (z)\leq 0$ for every  $z\in \alpha K$. It follows from \cite[Corollary 10.13 and Proposition 18.7]{BD} (see also \cite[Corollary 2.1.9(i) and Lemma 2.2.21]{CR}) that $\exp (\alpha D)$ is a contractive algebra automorphism of $A$. Now, by Corollary \ref{vaya}, we have $\exp (\alpha D)(LOH(A))\subseteq LOH(A)$. Finally, arguing as at the end of the proof of Proposition \ref{propisition1}, we conclude that $\alpha D(LOH(A))\subseteq LOH(A)$.
\end{proof}

 Let $X$ be a complex vector space  endowed with a conjugate-linear involution~$*$, and let $L(X)$ denote the algebra of all linear operators on $X$. \textit{For $F\in L(X)$ define $F^*\in L(X)$  by $F^*(x):=(F(x^*))^*$.}  Then it is straightforward that the mapping $F\to F^*$ from $L(X)$ to $L(X)$ is a conjugate-linear vector space involution satisfying $(F\circ G)^*=F^*\circ G^*$ for all $F,G\in L(X)$.

We recall that derivations of non-commutative $JB^*$-algebras are automatically continuous \cite{Y} (cf. \cite[Lemma 3.4.26]{CR}).

\begin{theorem} \label{new}
Let $A$ be a nonzero non-commutative $JB^*$-algebra, and let $D$ be a derivation of $A$ such that $0$ is not an interior point of $V(BL(A),I_A,D)$. Then there is a module-one  complex number $\alpha $ such that $D^*=\alpha ^2D$, and actually $V(BL(A),I_A,D)$ is contained in a straight line passing by $0$. More precisely, we have $V(BL(A),I_A,D)= i\overline{\alpha} \, [-\|D\|,\|D\|]$.
\end{theorem}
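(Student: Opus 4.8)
The plan is to use Proposition~\ref{estoycansad} to produce the rotation $\alpha$, to upgrade the invariance it yields into the algebraic identity $D^*=\alpha^2D$, and then to read off the numerical range by recognising that $\alpha D$ is a self-adjoint derivation. First I would note that $A$, being a nonzero non-commutative $JB^*$-algebra, is complete, non-nilpotent, and carries an approximate identity bounded by $1$, and that $D$ is automatically continuous; hence Proposition~\ref{estoycansad} applies and furnishes a modulus-one complex number $\alpha$ with $\alpha D(LOH(A))\subseteq LOH(A)$. Put $T:=\alpha D$. By Remark~\ref{easypart} the natural involution $*$ of $A$ coincides with the $JB^*$-involution and $LOH(A)$ is exactly the set of $*$-fixed points, so $T$ maps self-adjoint elements to self-adjoint elements. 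Writing $F^*(x):=(F(x^*))^*$ for the operator involution, one has $T^*(h)=(T(h))^*=T(h)$ for $h\in LOH(A)$; since $T$ and $T^*$ are both complex-linear, the splitting $A=LOH(A)\oplus iLOH(A)$ forces $T^*=T$. As $F\mapsto F^*$ is conjugate-linear, $T^*=\overline{\alpha}D^*$, whence $\overline{\alpha}D^*=\alpha D$ and therefore $D^*=\alpha^2 D$, which is the first assertion.

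Next I would pin down $W:=V(BL(A),I_A,T)$. Because $T$ is a continuous derivation, the one-parameter group $\exp(rT)$ ($r\in\R$) consists of algebra automorphisms of $A$, and $T^*=T$ gives $\exp(rT)^*=\exp(rT)$, so these are $*$-automorphisms, hence isometric. Thus $\exp(rT)$ is a surjective linear isometry for every real $r$, and the exponential characterization of hermitian elements (\cite[Corollary 10.13]{BD}) shows that $-iT\in H(BL(A),I_A)$, i.e. $W\subseteq i\R$. On the other hand the operator involution $F\mapsto F^*$ is a conjugate-linear isometric algebra automorphism of $BL(A)$ fixing $I_A$ (isometric because the $JB^*$-involution on $A$ is isometric), so conjugating states by it yields $V(BL(A),I_A,F^*)=\overline{V(BL(A),I_A,F)}$ for every $F$; applied to $T^*=T$ this gives $W=\overline{W}$. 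A conjugation-symmetric compact convex subset of $i\R$ has the form $i[-\rho,\rho]$, so $W=i[-\rho,\rho]$.

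Finally, since $-iT$ is hermitian, the fact that a hermitian element has numerical radius equal to its norm (\cite{BD}) gives $\rho=\|{-iT}\|=\|T\|=\|D\|$, so $W=i[-\|D\|,\|D\|]$. Rescaling by $\overline{\alpha}$ through $V(BL(A),I_A,D)=\overline{\alpha}\,W$ (valid since $V(BL(A),I_A,T)=\alpha V(BL(A),I_A,D)$ and $|\alpha|=1$) produces $V(BL(A),I_A,D)=i\overline{\alpha}[-\|D\|,\|D\|]$, which is in particular a straight line through $0$, as required.

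The step I expect to be the main obstacle is passing from the set-inclusion $\alpha D(LOH(A))\subseteq LOH(A)$ to the clean operator identity $T^*=T$, and thence to $D^*=\alpha^2D$: one must know that $LOH(A)$ is precisely the real part for the involution coming from the non-commutative $JB^*$-structure, and not merely some real subspace left invariant by $T$, which is exactly where Remark~\ref{easypart} and the decomposition $A=LOH(A)\oplus iLOH(A)$ are indispensable. The remaining ingredients—that $*$-automorphisms of non-commutative $JB^*$-algebras are isometric, the exponential test for hermiticity, the relation $V(BL(A),I_A,F^*)=\overline{V(BL(A),I_A,F)}$, and the equality of norm and numerical radius for hermitian elements—are standard and directly quotable.
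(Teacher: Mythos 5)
Your proof is correct, and its first half coincides with the paper's: both invoke Remark \ref{easypart} and Proposition \ref{estoycansad} to produce a modulus-one $\alpha$ with $\alpha D(LOH(A))\subseteq LOH(A)$, and both upgrade this, via the splitting $A=LOH(A)\oplus iLOH(A)$ and the identification of $LOH(A)$ with the self-adjoint part of $A$, to $(\alpha D)^*=\alpha D$ and hence $D^*=\alpha^2D$ (you spell out the linearity bookkeeping the paper leaves implicit). Where you genuinely diverge is in the determination of $W:=V(BL(A),I_A,\alpha D)$. The paper argues ``as in the proof of [CR, Lemma 3.4.77]'' with $i\alpha D$ in place of $D$, obtaining both $V(BL(A),I_A,i\alpha D)\subseteq \R$ and the \emph{spectral} containment $\{-\|D\|,\|D\|\}\subseteq \mathrm{sp}(BL(A),i\alpha D)$, and then concludes via $\mathrm{sp}\subseteq V$ ([CR, Lemma 2.3.21]), the trivial bound $V\subseteq \|D\|\B_\C$, and convexity. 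You never touch the spectrum: you get hermiticity of $-i\alpha D$ from the one-parameter group $\exp(r\alpha D)$ of $*$-automorphisms (isometric because bijective algebra $*$-homomorphisms between non-commutative $JB^*$-algebras are isometric -- a quotable fact, and the place where Youngson's isometry of the $JB^*$-involution enters) together with the Vidav exponential criterion; you get the symmetry $W=-W$ from the identity $V(BL(A),I_A,F^*)=\overline{V(BL(A),I_A,F)}$, which is valid since $F\mapsto F^*$ is a unital conjugate-linear isometry of $BL(A)$ and states transform accordingly; and you get the radius from Sinclair's theorem (norm equals numerical radius for hermitian elements), which pins $\rho=\|{-i\alpha D}\|=\|D\|$. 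All three ingredients check out, so the argument closes. The trade-off: the paper's route through the derivation lemma yields the strictly stronger information that the endpoints $\pm\|D\|$ lie in $\mathrm{sp}(BL(A),i\alpha D)$ itself, not merely in the numerical range; your route buys self-containedness, replacing the external derivation-specific lemma [CR, Lemma 3.4.77] by general-purpose numerical-range facts, at the cost of needing the isometry of $*$-automorphisms of non-commutative $JB^*$-algebras as an input.
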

\begin{proof}
Clearly, $A$ is not nilpotent. Therefore, by Remark \ref{easypart} and Proposition \ref{estoycansad}, there is a module-one complex number $\alpha $ such that $\alpha D(LOH(A))\subseteq LOH(A)$. This implies that $(\alpha D)^*=\alpha D$, and hence $D^*=\alpha ^2 D$. Moreover, since $(-i\alpha D)^*=i\alpha D$, we can argue as in the proof of \cite[Lemma 3.4.77]{CR}, with $i\alpha D$ instead of $D$, to obtain that $V(BL(A),I_A,i\alpha D)\subseteq \R$ and that
$\{-\|D\|,\|D\|\}\subseteq {\rm sp}(BL(A),i\alpha D)$. But, by \cite[Lemma 2.3.21]{CR}, we have $ {\rm sp}(BL(A),i\alpha D)\subseteq V(BL(A),I_A,i\alpha D)$. Therefore, since the inclusion $V(BL(A),I_A,i\alpha D)\subseteq \|D\|\B _\C $ is clear, it follows that
$V(BL(A),I_A,i\alpha D)=[-\|D\|,\|D\|]$, or equivalently $V(BL(A),I_A,D)= i\overline{\alpha} \, [-\|D\|,\|D\|]$.
\end{proof}

\begin{remark}
In the proof of Theorem \ref{new} we have applied that non-commutative $JB^*$-algebras are not nilpotent. Actually, as we are going to show here, \textit{every left operator $V$-algebra with zero left annihilator is not nilpotent.} Let $A$ be such an algebra. Then, since $A$ is nonzero, and is the complex linear hull of $LOH(A)$, and has zero left annihilator, there must exist $h\in LOH(A)$ such that $L_h^A\neq 0$. Therefore, since  $L_h^A\in H(BL(A),I_A)$, it follows from \cite[Theorem 10.17]{BD} (see also \cite[Proposition 2.3.22]{CR}) that $(L_h^A)^n\neq 0$ for every $n\in \N$. Thus, for each $n\in \N$ there exists $a_n\in A$ such that $(L_h^A)^n(a_n)\neq 0$. This implies that $A$ is not nilpotent.

\end{remark}

The following fact follows straightforwardly from \cite[Lemma 5.7.1]{CRbis}.

\begin{fact} \label{factquotient}
Let $A$ be a normed algebra over $\K$, let $M$ be a closed proper ideal of $A$, and let $q:A\to A/M$ denote the natural quotient mapping. Then for $a\in A$ we have
$LOV(A/M,q(a))\subseteq LOV(A,a)$.
Therefore, if $\K =\C$, then the inclusion $q(LOH(A))\subseteq LOH(A/M)$ holds.
\end{fact}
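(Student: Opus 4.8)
The plan is to realize the multiplication operator $L_{q(a)}^{A/M}$ as the operator that $L_a^A$ induces on the quotient, and then to transport the numerical range across the quotient by monotonicity under unital contractions. Because $M$ is a two-sided ideal, $L_a^A(M)=aM\subseteq M$, so $L_a^A$ leaves $M$ invariant and descends to a bounded operator on $A/M$ given by $q(b)\mapsto q(ab)$; since $L_{q(a)}^{A/M}(q(b))=q(a)q(b)=q(ab)$, this descended operator is exactly $L_{q(a)}^{A/M}$. This invites working inside the subspace $\mathcal{M}:=\{T\in BL(A):T(M)\subseteq M\}$ of $BL(A)$, on which the induction map $\Psi(T):=\overline{T}$, with $\overline{T}(q(b)):=q(T(b))$, is well defined.

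First I would verify that $\Psi:\mathcal{M}\to BL(A/M)$ is a contractive linear map sending $I_A$ to $I_{A/M}$: well-definedness uses $T(M)\subseteq M$, the unital property is clear, and the bound $\|\Psi(T)\|\leq\|T\|$ follows from the definition of the quotient norm by choosing representatives of nearly minimal norm. Since $\|I_A\|=1$, the pair $(\mathcal{M},I_A)$ carries a numerical range, and \cite[Corollary 2.1.2]{CR} applied to $\Psi$ yields
\[
V(BL(A/M),I_{A/M},L_{q(a)}^{A/M})=V(BL(A/M),I_{A/M},\Psi(L_a^A))\subseteq V(\mathcal{M},I_A,L_a^A).
\]
Finally, because $\mathcal{M}$ is a subspace of $BL(A)$ containing the common unit $I_A$, every state of $(\mathcal{M},I_A)$ extends by Hahn--Banach to a state of $(BL(A),I_A)$, so $V(\mathcal{M},I_A,L_a^A)\subseteq V(BL(A),I_A,L_a^A)=LOV(A,a)$. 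Concatenating the two inclusions gives $LOV(A/M,q(a))\subseteq LOV(A,a)$, which is the first assertion.

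The second assertion is then immediate from the definitions: if $\K=\C$ and $a\in LOH(A)$, then $LOV(A,a)=V(BL(A),I_A,L_a^A)\subseteq\R$, whence $LOV(A/M,q(a))\subseteq\R$, that is $L_{q(a)}^{A/M}\in H(BL(A/M),I_{A/M})$ and $q(a)\in LOH(A/M)$, so $q(LOH(A))\subseteq LOH(A/M)$. I do not expect a genuine obstacle; the only point demanding care is that $\Psi$ is defined solely on the $M$-invariant operators, so one must carry out the numerical range comparison inside $\mathcal{M}$ and only afterwards return to all of $BL(A)$ through the Hahn--Banach step. This matches the author's assessment that the statement follows straightforwardly from \cite[Lemma 5.7.1]{CRbis}.
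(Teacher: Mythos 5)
Your proof is correct. The paper gives no argument beyond the one-line citation of \cite[Lemma 5.7.1]{CRbis}, and your construction --- passing $L_a^A$ (which leaves the ideal $M$ invariant) to the induced operator on $A/M$, comparing numerical ranges through the unital contraction $\Psi$ defined on the $M$-invariant operators via \cite[Corollary 2.1.2]{CR}, and returning from $V(\mathcal{M},I_A,L_a^A)$ to $V(BL(A),I_A,L_a^A)$ by Hahn--Banach extension of states --- is exactly a self-contained proof of the content of that cited lemma, so in substance your route coincides with the paper's.
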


If $X$ is a vector space over $\K$, if $*$ is a conjugate-linear
involution on $X$, and if $M$ is a $*$-invariant subspace of $X$,
then $x+M\to x^*+M$ becomes a well-defined conjugate-linear
involution on $X/M$, which will be called {\it the quotient involution}.

\begin{proposition} \lbl{Videal}
Let $A$ be a left operator $V$-algebra, and let $M$ be a closed proper ideal of
$A$. Then $A/M$ is a left operator $V$-algebra. Moreover, if in addition $A$ has a right approximate identity, then:
\begin{enumerate}
\item[\rm (i)] $M$ is invariant under the natural involution of $A$.
\item[\rm (ii)] $A/M$ has a right approximate identity, and the natural involution of $A/M$ is the
quotient of the natural involution of $A$.
\end{enumerate}
\end{proposition}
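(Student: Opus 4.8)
The plan is to reduce everything to two facts already available: the quotient estimate $q(LOH(A)) \subseteq LOH(A/M)$ from Fact \ref{factquotient}, and the directness of the decomposition $A = LOH(A) \oplus iLOH(A)$, which holds as soon as the left annihilator vanishes. For the unconditional assertion that $A/M$ is a left operator $V$-algebra, I would argue purely formally: $A/M \neq 0$ because $M$ is proper, and applying the surjective complex-linear quotient map $q$ to $A = LOH(A) + iLOH(A)$ gives, via Fact \ref{factquotient}, $A/M = q(LOH(A)) + iq(LOH(A)) \subseteq LOH(A/M) + iLOH(A/M)$, so $A/M$ is the complex linear hull of $LOH(A/M)$. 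No extra hypothesis is needed here.

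Now suppose $A$ has a right approximate identity $a_\lambda $, so that $l$-$\Ann (A)=0$ by Fact \ref{fact00}(i), the natural involution $*$ of $A$ is available, and $A = LOH(A) \oplus iLOH(A)$ by Lemma \ref{4.100}(ii). I would first record that $q(a_\lambda )$ is a right approximate identity in $A/M$ (by continuity of $q$ together with $a a_\lambda \to a$), whence $l$-$\Ann (A/M)=0$ by Fact \ref{fact00}(i); this is the first half of (ii) and, more importantly, it makes the decomposition $A/M = LOH(A/M) \oplus iLOH(A/M)$ direct. For assertion (i), take $m \in M$ and write $m = h + ik$ with $h, k \in LOH(A)$; applying $q$ yields $0 = q(h) + iq(k)$ with $q(h), q(k) \in LOH(A/M)$ by Fact \ref{factquotient}, and directness of the quotient decomposition forces $q(h) = q(k) = 0$, i.e. $h, k \in M$, so that $m^* = h - ik \in M$.

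I expect this separation step in (i) — deducing that the real and imaginary parts of an element of $M$ lie separately in $M$ — to be the only genuinely non-formal point, and it is exactly where the zero-left-annihilator property of the quotient (forced by the right approximate identity) is used. Granting (i), the quotient involution $q(a) \mapsto q(a^*)$ is well defined, and I would identify it with the natural involution of $A/M$ by the same bookkeeping: for $a = h + ik$ with $h, k \in LOH(A)$, directness makes $q(a) = q(h) + iq(k)$ the canonical $A/M$-decomposition, so both the natural involution of $A/M$ and the quotient involution send $q(a)$ to $q(h) - iq(k) = q(a^*)$; by the uniqueness built into Lemma \ref{4.100}(ii) the two involutions coincide, which completes (ii) and the proof.
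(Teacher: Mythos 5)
Your proposal is correct and takes essentially the same route as the paper's proof: both rest on Fact \ref{factquotient} applied to the decomposition $A=LOH(A)+iLOH(A)$, push the right approximate identity through $q$ to $A/M$, and exploit the resulting directness of $A/M=LOH(A/M)\oplus iLOH(A/M)$ coming from Lemma \ref{4.100}(ii). The only difference is organizational: the paper obtains (i) and (ii) simultaneously from the single identity $q(a^*)=(q(h)+iq(k))^\natural=q(a)^\natural$ (specializing to $a\in M$ for (i)), whereas you first separate $q(h)=q(k)=0$ for $m\in M$ and then identify the two involutions, which is an immediate rearrangement of the same argument.
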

\begin{proof}
Let $q:A\to A/M$ denote the natural quotient mapping. Let $a$ be in $A$, and write $a=h+ik$ with $h,k\in LOH(A)$. Then $q(a)=q(h)+iq(k)$ with  $q(h),q(k)\in LOH(A/M)$ thanks to fact \ref{factquotient}. Therefore, since $a$ is arbitrary in $A$, and $q$ is surjective, we see that $A/M$ is a left operator $V$-algebra.

Suppose that $A$ has a right approximate identity (say $a_\lambda $). Then clearly $q(a_\lambda )$ is a right approximate identity in $A/M$. Now let $*$ and $\natural $ denote the natural involution of $A$ and of $A/M$, respectively. Let $a$ be in $A$, and write $a=h+ik$ with $h,k\in LOH(A)$.
Then we have
\[
q(a^*)=q(h)-iq(k)=(q(h)+iq(k))^\natural =q(a)^\natural ,
\]
where for the second equality we have applied Fact \ref{factquotient} again. It follows that $a^*$ lies in $M$ whenever $a$ belongs to $M$ (so $M$ is $*$-invariant because $a$ is arbitrary in $A$) and that $\natural $ is the quotient involution of $*$.
\end{proof}

It is clear that, for a normed algebra $A$ over $\K$, the property of having a (two-sided) approximate identity bounded by $1$ passes from $A$ to the quotient of $A$ by any closed ideal of $A$. Considering this, it is enough to combine Theorem \ref{unit-freenon-associativeVP} and Proposition \ref{Videal} to obtain the following.

\begin{corollary}
{\rm \cite[Proposition 3.4.13]{CR}}
Let $A$ be a non-commutative $JB^*$-algebra, and let $M$ be a closed ideal of $A$. Then $M$ is $*$-invariant, and $A/M$, endowed with the quotient norm and the quotient involution, becomes a non-commutative $JB^*$-algebra.
\end{corollary}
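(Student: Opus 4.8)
The plan is to assemble the two main results already established, namely Theorem \ref{unit-freenon-associativeVP} and Proposition \ref{Videal}, exactly as the sentence preceding the corollary suggests, while disposing of the degenerate cases by hand. The only genuine work is bookkeeping; no new analytic input is required.

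First I would clear away trivialities. If $A=0$ the assertion is vacuous, so assume $A\neq 0$. If $M=A$, then $A/M=0$ is trivially a non-commutative $JB^*$-algebra, so I may further assume that $M$ is a \emph{proper} closed ideal of $A$; in particular $A/M\neq 0$. This reduction matters because Proposition \ref{Videal} is stated for closed \emph{proper} ideals, and because the converse (hard) part of Theorem \ref{unit-freenon-associativeVP} characterizes \emph{nonzero} non-commutative $JB^*$-algebras.

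Next, by the easy part of Theorem \ref{unit-freenon-associativeVP}, the nonzero non-commutative $JB^*$-algebra $A$ is a complete left operator $V$-algebra possessing an approximate identity bounded by $1$, and its involution as a non-commutative $JB^*$-algebra is precisely its natural involution $*$ as a left operator $V$-algebra. Since such a two-sided approximate identity is in particular a right approximate identity, Proposition \ref{Videal} applies: it yields that $A/M$ is again a left operator $V$-algebra, that $M$ is $*$-invariant (so the quotient involution on $A/M$ is well defined), and that the natural involution of $A/M$ coincides with this quotient involution. This already settles the asserted $*$-invariance of $M$.

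Finally I would verify that $A/M$ meets the hypotheses of the hard part of Theorem \ref{unit-freenon-associativeVP}. Completeness of $A/M$ under the quotient norm is standard, being the quotient of a Banach space by a closed subspace. As noted immediately before the corollary, the property of possessing a (two-sided) approximate identity bounded by $1$ passes from $A$ to $A/M$ via the quotient map $q$, namely $q(a_\lambda)$ does the job. Thus $A/M$ is a nonzero complete left operator $V$-algebra having an approximate identity bounded by $1$, and, endowed with its natural involution, it is a non-commutative $JB^*$-algebra by Theorem \ref{unit-freenon-associativeVP}. There is no deep step here beyond invoking the two cited results; the only point demanding attention is the coherence of the involutions, that is, identifying the abstractly produced natural involution of $A/M$ with the concrete quotient involution of $*$. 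I expect this matching to be the one place where care is needed, and it is precisely what Proposition \ref{Videal}(ii) supplies.
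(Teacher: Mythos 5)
Your proof is correct and follows essentially the same route as the paper, which obtains the corollary precisely by combining Theorem \ref{unit-freenon-associativeVP} (both its easy part, to see $A$ as a complete left operator $V$-algebra with approximate identity bounded by $1$ whose natural involution is the $JB^*$-involution, and its hard part, applied to $A/M$) with Proposition \ref{Videal} and the observation that an approximate identity bounded by $1$ passes to the quotient. Your explicit disposal of the degenerate cases $A=0$ and $M=A$ is a minor piece of bookkeeping the paper leaves implicit, since Proposition \ref{Videal} and the hard part of Theorem \ref{unit-freenon-associativeVP} concern proper ideals and nonzero algebras respectively.
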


\begin{lemma} \label{linfinite}
Let $I$ be a nonempty set, let $\{X_i \}_{i \in I}$ be a family
of nonzero normed spaces over $\K$, and, for  $i\in I$, let $F_i$ be in $BL(X_i)$ in such a way that $\sup \{\|F_i\|:i\in I\}<\infty $.
Denote by $X$ the $\ell_\infty$-sum
of the family $\{X_i \}_{i \in I}$, and let $F\in BL(X)$ be defined by $F(\{x_i\}):=\{F_i(x_i)\}$ for every $\{x_i\}\in X$. Then the equality
\[
V(BL(X),I_X,F) = \overline{\rm co} \, [\cup _{i\in
I} V(BL(X_i), I_{X_i}, F_i)]
\]
 holds, where $\overline{\rm co}$ means closed convex hull.
\end{lemma}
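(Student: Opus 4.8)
The plan is to prove the asserted equality by comparing the two compact convex subsets of $\K$ through their support functions. Both sides are indeed compact and convex: each $V(BL(X_i),I_{X_i},F_i)$ is contained in the disc of radius $M:=\sup_i\|F_i\|<\infty$, so the union $\cup_i V(BL(X_i),I_{X_i},F_i)$ is bounded and its closed convex hull is compact, whereas $V(BL(X),I_X,F)$ is compact and convex by the very definition of a numerical range. For a compact convex subset $S$ of $\K$ and a scalar $\mu$ with $|\mu|=1$, write $s(S,\mu):=\max\{\Re(\mu z):z\in S\}$; two such sets coincide as soon as $s(\cdot,\mu)$ agree for all such $\mu$ (in the real case $\mu=\pm1$ suffices). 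Since taking closed convex hulls does not alter the supremum of an $\R$-linear functional, $s(\overline{\rm co}[\cup_i V(BL(X_i),I_{X_i},F_i)],\mu)=\sup_i s(V(BL(X_i),I_{X_i},F_i),\mu)$, so the statement reduces to the scalar identity
\begin{equation*}
s(V(BL(X),I_X,F),\mu)=\sup_i s(V(BL(X_i),I_{X_i},F_i),\mu)\qquad(|\mu|=1).
\end{equation*}

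Next I would express each support value analytically. Using $\Re(\mu f(T))=\Re f(\mu T)$ for a state $f$ together with the classical formula $\max\Re V(Y,v,y)=\lim_{\alpha\to0^+}\alpha^{-1}(\|v+\alpha y\|-1)=\inf_{\alpha>0}\alpha^{-1}(\|v+\alpha y\|-1)$, valid in any norm-unital normed algebra (cf.\ \cite{BD}), the left-hand side becomes $\lim_{\alpha\to0^+}\alpha^{-1}(\|I_X+\alpha\mu F\|-1)$ and the $i$-th term on the right becomes $v_i:=\lim_{\alpha\to0^+}\alpha^{-1}(\|I_{X_i}+\alpha\mu F_i\|-1)$. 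Because $X$ is the $\ell_\infty$-sum and $F$ acts coordinatewise, the unit ball of $X$ is the full product of the unit balls of the $X_i$, whence $\|I_X+\alpha\mu F\|=\sup_i\|I_{X_i}+\alpha\mu F_i\|$ for every $\alpha>0$. Writing $\phi_i(\alpha):=\alpha^{-1}(\|I_{X_i}+\alpha\mu F_i\|-1)$, which is nondecreasing in $\alpha>0$ (a difference quotient of the convex function $\alpha\mapsto\|I_{X_i}+\alpha\mu F_i\|$) with $\inf_{\alpha>0}\phi_i(\alpha)=\lim_{\alpha\to0^+}\phi_i(\alpha)=v_i$, the identity becomes the minimax interchange
\begin{equation*}
\lim_{\alpha\to0^+}\sup_i\phi_i(\alpha)=\sup_i\lim_{\alpha\to0^+}\phi_i(\alpha)=\sup_i v_i .
\end{equation*}
Monotonicity gives $\phi_i(\alpha)\geq v_i$ for all $i,\alpha$, whence $\sup_i\phi_i(\alpha)\geq\sup_i v_i$ and the inequality $\geq$ is immediate.

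The only real obstacle is the reverse inequality, i.e.\ bounding $\sup_i\phi_i(\alpha)$ from above uniformly in $i$ as $\alpha\to0^+$; a naive interchange is false for merely monotone bounded families, so the hypothesis $M=\sup_i\|F_i\|<\infty$ must be used quantitatively. I would supply a uniform second-order estimate via the exponential: for $\|T\|\leq M$ one has $\|I+\alpha T-\exp(\alpha T)\|\leq\sum_{k\geq2}\alpha^kM^k/k!=e^{\alpha M}-1-\alpha M$, and, combining the numerical-range growth bound $\|\exp(\alpha\mu F_i)\|\leq e^{\alpha v_i}$ (again \cite{BD}) with $|v_i|\leq M$ and Taylor's estimate for $e^{\alpha v_i}$, one obtains a constant $C=C(M)$ with
\begin{equation*}
\|I_{X_i}+\alpha\mu F_i\|\leq\|\exp(\alpha\mu F_i)\|+(e^{\alpha M}-1-\alpha M)\leq 1+\alpha v_i+C\alpha^2\qquad(0<\alpha\leq1),
\end{equation*}
uniformly in $i$. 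Dividing by $\alpha$ gives $\phi_i(\alpha)\leq v_i+C\alpha$, hence $\sup_i\phi_i(\alpha)\leq\sup_i v_i+C\alpha$, and letting $\alpha\to0^+$ yields the missing inequality. This establishes the scalar identity for every $\mu$ of modulus one, and therefore the equality of the two compact convex sets. As a side remark, the easy inclusion $V(BL(X_i),I_{X_i},F_i)\subseteq V(BL(X),I_X,F)$ (hence ``$\supseteq$'' for the closed convex hull) can alternatively be checked directly by lifting a state $f_i$ of $BL(X_i)$ to the state $T\mapsto f_i(P_iTJ_i)$ of $BL(X)$, where $J_i,P_i$ are the canonical contractive inclusion and projection.
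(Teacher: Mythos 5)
Your argument is correct in substance, and it takes a genuinely different route from the paper. The paper's own proof is a two-line reduction: the mapping $\{F_i\}\mapsto F$ is a linear isometry from the $\ell_\infty$-sum $\mathcal{A}$ of the algebras $BL(X_i)$ into $BL(X)$ carrying $\{I_{X_i}\}$ to $I_X$, so $V(BL(X),I_X,F)=V(\mathcal{A},\{I_{X_i}\},\{F_i\})$ by the invariance of numerical ranges under unital linear isometries \cite[Corollary 2.1.2(ii)]{CR}, and the right-hand side is then evaluated by the known formula for numerical ranges in $\ell_\infty$-sums of norm-unital normed algebras from \cite{R1} (cf.\ \cite[Corollary 2.9.50]{CR}). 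You instead reprove the needed case of that external result from scratch: support functions reduce the set equality to the scalar identity $\lim_{\alpha\to0^+}\sup_i\phi_i(\alpha)=\sup_i v_i$, the inequality $\geq$ follows from monotonicity of the convex difference quotients, and the reverse inequality--which is where the hypothesis $M=\sup_i\|F_i\|<\infty$ genuinely enters--is obtained from the uniform second-order estimate $\|I_{X_i}+\alpha\mu F_i\|\leq 1+\alpha v_i+C(M)\alpha^2$ via $\|\exp(\alpha\mu F_i)\|\leq e^{\alpha v_i}$. The paper's route is shorter and modular (it hides the interchange of $\sup_i$ and $\lim_{\alpha\to0^+}$ inside the quoted result); yours is self-contained and quantitative, and your closing remark correctly supplies the easy inclusion by lifting states through $T\mapsto f_i(P_iTJ_i)$.

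One small repair is needed: the lemma only assumes the $X_i$ to be normed spaces, so $BL(X_i)$ need not be complete, and $\exp(\alpha\mu F_i)$ need not exist as an element of $BL(X_i)$ (the exponential series is Cauchy in operator norm, but its pointwise limits may escape an incomplete $X_i$). This is harmless but should be said: pass to the completion of the normed algebra $BL(X_i)$, or to $BL(\widehat{X_i})$ as in Lemma \ref{bycontinuity} of the paper; this is an isometric embedding taking identity to identity, so by \cite[Corollary 2.1.2(ii)]{CR} the numerical ranges, the quantities $v_i$, and the norms $\|I_{X_i}+\alpha\mu F_i\|$ are all unchanged, the exponential exists in the completion, and every estimate in your argument (including the subadditivity-based bound $\|\exp(\alpha\mu F_i)\|\leq e^{\alpha v_i}$ from \cite{BD1}) survives verbatim. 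With that sentence added, your proof is complete; the remaining ingredients, notably $\|I_X+\alpha\mu F\|=\sup_i\|I_{X_i}+\alpha\mu F_i\|$, are correct as written.
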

\begin{proof}
Let $\mathcal A$ denote the $\ell_\infty$-sum
of the family $\{BL(X_i)\}_{i \in I}$. Then, since $\{F_i\}$ is an arbitrary element of $\mathcal A$, we can let it run over $\mathcal A$, and consider the mapping $\{F_i\}\to F$ from $\mathcal A$  to $BL(X)$. It is easily realized that such a mapping is a linear isometry taking $\{I_{X_i}\}$ to $I_X$. Therefore, by \cite[Corollary 2.1.2(ii)]{CR}, we have $V(BL(X),I_X,F) = V(\mathcal A,\{I_{X_i}\},\{F_i\})$. Now the result follows from \cite{R1} (cf. \cite[Corollary 2.9.50]{CR}).
\end{proof}

The above lemma straightforwardly yields the following.

\begin{proposition} \label{ya}
Let $I$ be a nonempty set, and let $\{A_i \}_{i \in I}$ be a family
of nonzero normed algebras over $\K$.
Denote by $A$ the normed algebra $\ell_\infty$-sum
of the family $\{A_i \}_{i \in I}$. Then, for $\{a_i\}\in A$, we have
$$LOV(A,\{a_i\})
= \overline{\rm co} \, [\cup _{i\in
I} LOV(A_i,a_i)],$$
and hence $LOH(A)$ is equal to the $\ell_\infty$-sum
of the family $\{LOH(A_i) \}_{i \in I}$. Therefore, if $A_i$ is a left operator $V$-algebra for every $i\in I$, then $A$ is a left operator $V$-algebra. Moreover, if in addition $A_i$ has zero left annihilator for every $i\in I$, then $A$ has zero left annihilator, and the equality $\{a_i\}^*=\{a_i^*\}$ holds for every $\{a_i\}\in A$, where $*$ stands indistinctly for the natural involution of $A$ and that of each $A_i$.
\end{proposition}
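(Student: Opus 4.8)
The plan is to recognize that the operator of left multiplication by $\{a_i\}$ on the $\ell_\infty$-sum $A$ is exactly the operator $F$ furnished by Lemma \ref{linfinite}. Indeed, since the product on $A$ is computed coordinatewise, $L_{\{a_i\}}^A(\{b_i\})=\{a_ib_i\}=\{L_{a_i}^{A_i}(b_i)\}$, so $L_{\{a_i\}}^A$ is the operator $F$ associated with the family $F_i:=L_{a_i}^{A_i}\in BL(A_i)$; moreover $\{a_i\}\in A$ forces $\sup_i\|L_{a_i}^{A_i}\|\le\sup_i\|a_i\|<\infty$, so Lemma \ref{linfinite} applies verbatim. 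Unwinding the definition $LOV(A,\{a_i\})=V(BL(A),I_A,L_{\{a_i\}}^A)$ and feeding it into the lemma yields
\[
LOV(A,\{a_i\})=V(BL(A),I_A,F)=\overline{\rm co}\,[\cup_{i\in I}V(BL(A_i),I_{A_i},L_{a_i}^{A_i})]=\overline{\rm co}\,[\cup_{i\in I}LOV(A_i,a_i)],
\]
which is the first displayed equality.

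Next I would extract the description of $LOH(A)$. Since $\R$ is a closed convex subset of $\C$, a set $S\subseteq\C$ satisfies $\overline{\rm co}(S)\subseteq\R$ if and only if $S\subseteq\R$. Hence $\{a_i\}$ is left operator hermitian, i.e. $LOV(A,\{a_i\})\subseteq\R$, precisely when each $LOV(A_i,a_i)\subseteq\R$, that is, when $a_i\in LOH(A_i)$ for every $i$. As $\{a_i\}$ ranges over the bounded families, this says exactly that $LOH(A)$ is the $\ell_\infty$-sum of the $LOH(A_i)$. The statement on annihilators is then immediate and purely algebraic: $L_{\{a_i\}}^A=0$ iff $L_{a_i}^{A_i}=0$ for all $i$, so $l$-$\Ann(A)$ is the $\ell_\infty$-sum of the $l$-$\Ann(A_i)$ and vanishes as soon as each summand does.

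It remains to deduce that $A$ is a left operator V-algebra and to identify its involution. Granting that $LOH(A)$ is the $\ell_\infty$-sum of the $LOH(A_i)$, one writes each coordinate $a_i=h_i+ik_i$ with $h_i,k_i\in LOH(A_i)$ (possible since each $A_i$ is a left operator V-algebra) and assembles $\{a_i\}=\{h_i\}+i\{k_i\}$, exhibiting $A$ as the complex linear hull of $LOH(A)$. Under the extra hypothesis $l$-$\Ann(A_i)=0$, each $A_i$ and $A$ carry natural involutions, and I would verify that the coordinatewise map $\{a_i\}\mapsto\{a_i^*\}$ is a conjugate-linear vector-space involution whose fixed-point set is precisely $LOH(A)$; by the uniqueness clause in the definition of the natural involution, this map must coincide with the involution $*$ of $A$, giving $\{a_i\}^*=\{a_i^*\}$.

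The main obstacle is buried in the last paragraph: the assembled families $\{h_i\}$ and $\{k_i\}$ (and likewise $\{a_i^*\}$) must be bounded in order to lie in the $\ell_\infty$-sum, so the coordinatewise hermitian decomposition and the coordinatewise involution have to be controlled uniformly in $i$. The tool I would use is the identity $V(BL(A_i),I_{A_i},L_{h_i}^{A_i})=\Re\,V(BL(A_i),I_{A_i},L_{a_i}^{A_i})$, which holds because every state splits the real and imaginary parts across a hermitian decomposition; combined with the coincidence of norm and numerical radius for hermitian operators it yields $\|L_{h_i}^{A_i}\|\le\|L_{a_i}^{A_i}\|\le\|a_i\|$, and symmetrically for $k_i$. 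Passing from these uniform bounds on the multiplication operators to uniform bounds on $\|h_i\|$ and $\|k_i\|$ themselves is the genuinely delicate point, since in general $L^{A_i}$ need not be bounded below; it is precisely here that one invokes Fact \ref{fact00}(ii), so that whenever the $A_i$ carry approximate identities bounded by $1$ (as in every intended application) one recovers $\|h_i\|\le\|L_{h_i}^{A_i}\|$ and the argument closes.
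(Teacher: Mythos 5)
Your computation of $LOV(A,\{a_i\})$, the identification of $LOH(A)$ with the $\ell_\infty$-sum of the $LOH(A_i)$, and the statement about left annihilators are correct and are exactly the paper's route: the paper's entire proof consists of the remark that Lemma \ref{linfinite} ``straightforwardly yields'' the proposition, and for those assertions this is accurate. Your auxiliary estimates are also sound: since $L_{a_i}^{A_i}=L_{h_i}^{A_i}+iL_{k_i}^{A_i}$ with both summands hermitian, every state splits as you say, giving $V(BL(A_i),I_{A_i},L_{h_i}^{A_i})=\Re\, V(BL(A_i),I_{A_i},L_{a_i}^{A_i})$, and the equality of norm and numerical radius for hermitian elements of the complete unital algebra $BL(A_i)$ then yields $\sup_i\Vert L_{h_i}^{A_i}\Vert\leq\sup_i\Vert a_i\Vert<\infty$.

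However, the ``genuinely delicate point'' you flag at the end is a real gap, and your proposal does not close it under the stated hypotheses: Fact \ref{fact00}(ii) is available only when the $A_i$ carry right approximate identities uniformly bounded, an assumption nowhere in the proposition, so what you have actually proved is the third and fourth sentences under an extra hypothesis. Moreover, no sharper estimate can repair this, because those sentences fail as literally stated. For $0<\varepsilon\leq 1$ let $B_\varepsilon$ be $\C^2$ with the sup norm (whose hermitian operators are precisely the real diagonal matrices, the surjective isometries being diagonal unitaries composed with permutations); put $h:=(1,\varepsilon)$, $k:=(i,\varepsilon)$, let $g$ be the invertible matrix with columns $h$ and $k$, let $\mu=(\mu_1,\mu_2)$ be a positive multiple of $g^{-1}$ small enough that $\Vert\mu(x)\Vert_\infty\leq\Vert x\Vert_\infty$, and define $x\cdot y:=(\mu_1(x)y_1,\mu_2(x)y_2)$. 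Then $\Vert xy\Vert\leq\Vert x\Vert\Vert y\Vert$ and $L_x^{B_\varepsilon}=\mathrm{diag}(\mu_1(x),\mu_2(x))$, whence $LOH(B_\varepsilon)=\mu^{-1}(\R^2)=\R h+\R k$, so $B_\varepsilon$ is a left operator $V$-algebra with $l$-$\Ann(B_\varepsilon)=0$; but $a:=h+ik=(0,\varepsilon(1+i))$ has $\Vert a\Vert=\sqrt{2}\,\varepsilon$, while its hermitian components (unique, by Fact \ref{fact00}(iii)) are $h$ and $k$ of norm $1$, and $\Vert a^*\Vert=\Vert h-ik\Vert=2$. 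Taking $A_n:=B_{1/n}$, the normalized element $\{a_n/\Vert a_n\Vert\}$ lies in the $\ell_\infty$-sum, but—decompositions being forced to be coordinatewise by the valid first part of the proposition—it admits no decomposition in $LOH(A)+iLOH(A)$, so the sum is not a left operator $V$-algebra, and the coordinatewise involutions are not uniformly bounded either. So your instinct is vindicated: some uniformity hypothesis (the one you impose via approximate identities, or uniform bounds $\Vert a\Vert\leq M\Vert L_a^{A_i}\Vert$ across the family) is genuinely needed for the last two sentences, and the paper's one-line proof passes over precisely this step; in all of the paper's applications the summands do have approximate identities bounded by $1$, so your qualified version is the one actually used.
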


\begin{lemma} \label{bycontinuity}
Let $X$ be a normed space over $\K$, and let $\widehat{X}$ denote the completion of $X$. For $F\in BL(X)$, let $\widehat{F}\in BL(\widehat{X})$ denote the extension of $F$ by continuity. Then for every $F\in BL(X)$ we have $$V(BL(\widehat{X}),I_{BL(\widehat{X)}},\widehat{F})=V(BL(X),I_X,F).$$ As a consequence, if $A$ is a normed complex algebra, then, denoting by $\widehat{A}$ the algebra completion of $A$, we have $LOH(A)=A\cap LOH(\widehat{A})$.
\end{lemma}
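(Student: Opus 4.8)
The plan is to derive the displayed equality of numerical ranges from the observation that $V(\,\cdot\,,\,\cdot\,,\,\cdot\,)$ depends only on the underlying normed-space structure together with the distinguished norm-one element (its definition makes no reference to products), and then to read off the statement about left operator hermitian elements as a routine corollary. In other words, everything should follow once we produce a suitable isometric, unit-preserving identification and invoke the behaviour of numerical ranges under such maps.

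First I would consider the assignment $\Psi\colon BL(X)\to BL(\widehat X)$ sending each $G$ to its continuous extension $\widehat G$. This is well defined, because $\widehat X$ is complete and $X$ is dense in it, and it is clearly linear and sends $I_X$ to $I_{\widehat X}$. Crucially, it is \emph{isometric}: since $X$ is dense in $\widehat X$ and $\widehat G$ is continuous, one has $\|\widehat G\|=\|G\|$ for every $G\in BL(X)$. Thus $\Psi$ is a linear isometry carrying the distinguished norm-one element $I_X$ to $I_{\widehat X}$ and carrying $F$ to $\widehat F$. Applying the behaviour of numerical ranges under such maps, exactly as in the proof of Lemma \ref{linfinite}, namely \cite[Corollary 2.1.2(ii)]{CR}, gives
\[
V(BL(\widehat X),I_{\widehat X},\widehat F)=V(BL(X),I_X,F),
\]
which is the first assertion.

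For the consequence, let $A$ be a normed complex algebra and fix $a\in A$. The key point is that $L_a^{\widehat A}$, left multiplication by $a$ in the algebra completion $\widehat A$, is a continuous operator on $\widehat A$ whose restriction to the dense subspace $A$ equals $L_a^A$; by uniqueness of continuous extensions this forces $\widehat{L_a^A}=L_a^{\widehat A}$. Applying the first part with $X=A$ and $F=L_a^A$ then yields
\[
V(BL(\widehat A),I_{\widehat A},L_a^{\widehat A})=V(BL(A),I_A,L_a^A).
\]
Since $a$ lies in $LOH(A)$ (respectively in $LOH(\widehat A)$) precisely when the left-hand (respectively right-hand) numerical range is contained in $\R$, this equality shows that an element $a\in A$ belongs to $LOH(A)$ if and only if it belongs to $LOH(\widehat A)$; that is, $LOH(A)=A\cap LOH(\widehat A)$.

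I expect no serious obstacle: the argument is essentially a repackaging of the Hahn--Banach content of \cite[Corollary 2.1.2(ii)]{CR} together with standard facts about continuous extension to a completion. The only points requiring genuine care are verifying that $\Psi$ is truly isometric (not merely contractive) and that $\widehat{L_a^A}=L_a^{\widehat A}$; both are settled by the density of $X$ (respectively of $A$) in its completion and the continuity of the extended product, so they present only a verification burden rather than a conceptual difficulty.
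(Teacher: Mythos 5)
Your proof is correct and takes essentially the same route as the paper's: the paper's entire proof consists of the observation that $F\mapsto \widehat{F}$ is a linear isometry from $BL(X)$ to $BL(\widehat{X})$ carrying $I_X$ to $I_{\widehat{X}}$, to which \cite[Corollary 2.1.2(ii)]{CR} applies. Your extra verifications (that the extension map is genuinely isometric, that $\widehat{L_a^A}=L_a^{\widehat{A}}$, and the translation into the statement $LOH(A)=A\cap LOH(\widehat{A})$) simply spell out details the paper leaves implicit.
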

\begin{proof}
The mapping $F\to \widehat{F}$ is a linear isometry taking $I_X$ to $I_{\widehat{X}}$, and \cite[Corollary 2.1.2(ii)]{CR} applies.
\end{proof}

\begin{lemma} \label{completion00}
Let $A$ be a left operator $V$-algebra such that there exists $M\geq 1$ satisfying $\|a\|\leq M\|L_a\|$ for every $a\in A$. Then the natural involution of $A$ is continuous, and the completion of $A$ is a left operator $V$-algebra whose natural involution extends that of $A$.
\end{lemma}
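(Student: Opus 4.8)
The plan is to push everything to the completion $\widehat A$ of $A$ and to exploit the fact that, although $BL(A)$ need not be complete, $BL(\widehat A)$ always is. First I would record the elementary consequences of the hypothesis. Since $\|L_a^A\|\le\|a\|$ holds automatically, the assumption $\|a\|\le M\|L_a^A\|$ says exactly that $a\mapsto L_a^A$ is bounded below; in particular $l$-$\Ann(A)=0$, so by Fact \ref{fact00}(iii) we have $LOH(A)\cap iLOH(A)=0$, the natural involution of $A$ is defined (cf. the remark following Fact \ref{fact00}), and $A=LOH(A)\oplus iLOH(A)$. I would then transport the norm estimate to $\widehat A$: the product of $A$ extends by continuity to $\widehat A$, the operator $L_a^{\widehat A}$ is precisely the continuous extension of $L_a^A$ (so $\|L_a^{\widehat A}\|=\|L_a^A\|$ for $a\in A$), and a density argument (taking $a_n\to\hat a$ and using $\|L_{a_n}^{\widehat A}-L_{\hat a}^{\widehat A}\|=\|L_{a_n-\hat a}^{\widehat A}\|\le\|a_n-\hat a\|$) yields $\|\hat a\|\le M\|L_{\hat a}^{\widehat A}\|$ for every $\hat a\in\widehat A$. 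Hence $l$-$\Ann(\widehat A)=0$ as well.

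The heart of the matter is a Sinclair-type estimate valid in $\widehat A$. By Lemma \ref{bycontinuity} we have $LOH(A)=A\cap LOH(\widehat A)$ and, crucially, for $h\in LOH(\widehat A)$ the operator $L_h^{\widehat A}$ lies in $H(BL(\widehat A),I_{\widehat A})$, i.e. it is hermitian in the \emph{complete} norm-unital algebra $BL(\widehat A)$. Writing $V(\cdot)$ as shorthand for $V(BL(\widehat A),I_{\widehat A},\cdot)$, Sinclair's theorem (cf. \cite{BD}) then gives $\|L_h^{\widehat A}\|=\max|V(L_h^{\widehat A})|$. Now fix $h,k\in LOH(\widehat A)$ and put $v:=h+ik$, so that $L_v^{\widehat A}=L_h^{\widehat A}+iL_k^{\widehat A}$ with both summands hermitian; every state $f$ then satisfies $\Re f(L_v^{\widehat A})=f(L_h^{\widehat A})$, whence $V(L_h^{\widehat A})=\Re V(L_v^{\widehat A})$. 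Combining, $\|h\|\le M\|L_h^{\widehat A}\|=M\max|V(L_h^{\widehat A})|=M\max|\Re V(L_v^{\widehat A})|\le M\max|V(L_v^{\widehat A})|\le M\|L_v^{\widehat A}\|\le M\|v\|$, and symmetrically $\|k\|\le M\|v\|$. \emph{This is the step I expect to be the main obstacle}: the point is that $A$ need not be complete, so $BL(A)$ is not complete and Sinclair's theorem cannot be invoked directly there; the device of passing to $\widehat A$ and using Lemma \ref{bycontinuity} to certify that $L_h^{\widehat A}$ is genuinely hermitian in the complete algebra $BL(\widehat A)$ is what legitimizes the estimate.

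From this key inequality the remainder is soft. It shows that the map $(h,k)\mapsto h+ik$ from the Banach space $LOH(\widehat A)\times LOH(\widehat A)$ (a product of closed subspaces of the complete space $\widehat A$) into $\widehat A$ is bounded below, so its range $LOH(\widehat A)+iLOH(\widehat A)$ is closed. This range contains $A=LOH(A)+iLOH(A)$, and $A$ is dense in $\widehat A$, hence it equals $\widehat A$; thus $\widehat A$ is a complete left operator $V$-algebra, and since $l$-$\Ann(\widehat A)=0$ its natural involution is defined. That involution fixes $LOH(\widehat A)\supseteq LOH(A)$ and is conjugate-linear, so on $A$ it coincides with the involution fixing $LOH(A)$, i.e. with the natural involution of $A$; hence it extends the latter. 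Finally, continuity of both involutions is immediate from the key inequality: for $a=h+ik\in A$ one gets $\|a^*\|=\|h-ik\|\le\|h\|+\|k\|\le 2M\|a\|$, and the same bound holds verbatim on $\widehat A$.
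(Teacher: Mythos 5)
Your proof is correct, and it shares the paper's backbone: the Sinclair-type inequality $\max\{\|h\|,\|k\|\}\leq M\|h+ik\|$ on hermitian decompositions, Lemma \ref{bycontinuity} to transfer hermiticity between $A$ and $\widehat{A}$, and the closedness of $LOH(\widehat{A})$. The differences are in execution, and they are worth comparing. First, the step you flag as the main obstacle --- that Sinclair's theorem cannot be invoked in the possibly incomplete algebra $BL(A)$ --- is not actually an obstacle in the paper's framework: the paper simply cites \cite[Corollary 2.3.5]{CR}, which gives $\|L_h\|\leq \|L_h+iL_k\|$ for hermitian elements of an \emph{arbitrary} norm-unital normed complex algebra (such results survive without completeness precisely because numerical ranges and hermiticity are unchanged under passage to the completion, the very phenomenon your own Lemma \ref{bycontinuity} exploits). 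Your workaround --- proving the estimate in the genuinely complete algebra $BL(\widehat{A})$ via $\Re V(L_{h+ik})=V(L_h)$ and $v(L_h)=\|L_h\|$ --- is valid and self-contained, at the cost of first extending $\|a\|\leq M\|L_a\|$ to $\widehat{A}$ by density. Second, the mechanics of decomposing $\widehat{A}$ differ: the paper obtains the topological direct sum $A=LOH(A)\oplus iLOH(A)$ in $A$ itself (so continuity of the involution of $A$ is immediate), then decomposes a general element of $\widehat{A}$ by a Cauchy-sequence argument, whereas you prove the estimate directly in $\widehat{A}$ and conclude via the closed range of the bounded-below real-linear map $(h,k)\mapsto h+ik$ on $LOH(\widehat{A})\times LOH(\widehat{A})$ together with density of $A$; these are equivalent in substance. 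A small merit of your write-up is that it makes explicit what the paper leaves implicit in its closing sentence, namely that $l$-$\Ann(\widehat{A})=0$ (via the extended estimate and Fact \ref{fact00}), so that the natural involution of $\widehat{A}$ is actually defined before one checks it extends that of $A$.
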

\begin{proof}
Let $h$ and $k$ be in $LOH(A)$. Then, by  \cite[Corollary 2.3.5]{CR}, we have
$\|h\|\leq M\|L_h\|\leq M\|L_h+iL_k\|\leq M\|h+ik\|$,
Therefore
\begin{equation} \label{direcsum}
\mbox{the direct sum $A=LOH(A)\oplus LOH(A)$ is topological,}
\end{equation}
and hence the involution natural of $A$ is continuous. Let $\widehat{A}$ denote the completion of $A$, and let $a$ be in $\widehat{A}$. Then there exists a sequence $a_n$ in $A$ such that $\lim _na_n=a$. For each $n\in \N$, write $a_n=h_n+ik_n$ with $h_n,k_n\in LOH(A)$. Then, by \eqref{direcsum}, $h_n$ and $k_n$ are Cauchy sequences in $A$, and hence they converge in $\widehat{A}$ to some $h$ and $k$ respectively. But, by Lemma \ref{bycontinuity}, $h_n,k_n\in LOH(\widehat{A})$.
Therefore, since $LOH(\widehat{A})$ is closed in $\widehat{A}$, we see that $a=h+ik$ with $h,k\in LOH(\widehat{A})$. Since $a$ is arbitrary in $\widehat{A}$, we conclude that $\widehat{A}$ is a left operator $V$-algebra. The remaining part of the lemma follows from Lemma \ref{bycontinuity}.
\end{proof}

\begin{lemma} \label{approximate}
Let $A$ be a normed algebra over $\K$, and let $B$ be a dense subalgebra of $A$. Then every bounded right approximate identity in $B$ is a right approximate identity in $A$ .
\end{lemma}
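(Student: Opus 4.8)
The plan is to show that a bounded right approximate identity $b_\lambda$ in the dense subalgebra $B$ remains a right approximate identity when we pass to all of $A$. First I would fix an arbitrary $a\in A$ and an $\varepsilon>0$, and exploit density to approximate $a$ by an element of $B$: choose $b\in B$ with $\|a-b\|<\varepsilon$. The idea is then to estimate $\|ab_\lambda-a\|$ by splitting it through the approximating element $b$, for which the right-approximate-identity property is already available by hypothesis.

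Concretely, I would write the triangle-inequality decomposition
\[
\|ab_\lambda-a\|\leq \|ab_\lambda-bb_\lambda\|+\|bb_\lambda-b\|+\|b-a\|.
\]
The middle term tends to $0$ along $\lambda$ because $b\in B$ and $b_\lambda$ is a right approximate identity in $B$, so it can be made smaller than $\varepsilon$ for $\lambda$ large enough. The third term is $<\varepsilon$ by the choice of $b$. The first term is bounded, using submultiplicativity of the algebra norm, by $\|a-b\|\,\|b_\lambda\|\leq M\varepsilon$, where $M\geq 1$ is a bound for the net $b_\lambda$ (here I use precisely the boundedness hypothesis on the approximate identity). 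Combining these three estimates yields $\limsup_\lambda\|ab_\lambda-a\|\leq (M+1)\varepsilon$, and since $\varepsilon>0$ is arbitrary, $\lim_\lambda ab_\lambda=a$ for every $a\in A$, which is exactly the assertion that $b_\lambda$ is a right approximate identity in $A$.

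The only place where care is needed, and hence the main point of the argument, is the first term: without the uniform bound $\|b_\lambda\|\leq M$ one could not control $\|(a-b)b_\lambda\|$, since the net $b_\lambda$ need not converge in norm. The boundedness hypothesis is therefore essential and is what makes the otherwise routine $3\varepsilon$-argument go through. No completeness of $A$ is required, and the argument is purely metric, so it applies verbatim over either $\K=\R$ or $\K=\C$; by symmetry the analogous statement for bounded left approximate identities holds as well.
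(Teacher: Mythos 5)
Your proposal is correct and coincides with the paper's proof: the same triangle-inequality decomposition through an approximant $b\in B$, with the middle term controlled by the approximate-identity property in $B$ and the term $\|(a-b)b_\lambda\|$ controlled by the bound $M$ on the net; the only difference is cosmetic bookkeeping (your $\limsup\leq(M+1)\varepsilon$ versus the paper's upfront choice $\|a-b\|\leq\frac{\varepsilon}{2(1+M)}$).
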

\begin{proof}
Let $b_\lambda $ be a right approximate identity in $B$ bounded by $M\geq 1$. Let $a$ be in $A$, and let $\varepsilon >0$.
Then there exists $b\in B$ such that $\|a-b\|\leq \frac{\varepsilon }{2(1+M)}$ and, once such a $b$ has been chosen, there exists $\lambda _0$ such that $\|b-bb_\lambda \|\leq \frac{\varepsilon }{2}$ whenever $\lambda \geq \lambda _0$.
It follows from the triangle inequality that
\[
\|a-ab_\lambda \|\leq \|a-b\|+\|b-bb_\lambda \|+\|(b-a)b_\lambda \|
\]
\[
\leq (1+M)\|a-b\|+\|b-bb_\lambda \|\leq \frac{\varepsilon}{2}+\frac{\varepsilon}{2}=\varepsilon
\]
whenever $\lambda \geq \lambda _0$.
\end{proof}

The next proposition becomes a unit-free version of \cite[Corollary 3.3.15]{CR}.

\begin{proposition} \label{completion}
Let $A$ be a left operator $V$-algebra having a bounded right approximate identity. Then the natural involution of $A$ is continuous, and the completion of $A$ is a left operator $V$-algebra having a bounded right approximate identity $($by the way, the same that $A$ had$)$, and whose natural involution extends that of $A$.
\end{proposition}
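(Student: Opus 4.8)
The plan is to obtain Proposition \ref{completion} as an essentially immediate assembly of three results already established in this section: Fact \ref{fact00}, Lemma \ref{completion00}, and Lemma \ref{approximate}. The guiding observation is that a bounded right approximate identity furnishes exactly the norm estimate required to invoke Lemma \ref{completion00}, and that such an approximate identity survives the passage to the completion by Lemma \ref{approximate}.

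First I would fix a right approximate identity $u_\lambda $ in $A$ bounded by some constant, which I may take to be $M\geq 1$ (enlarging the bound if necessary). By Fact \ref{fact00}(i), the mere existence of a right approximate identity forces $l$-$\Ann (A)=0$, so the natural involution of $A$ is well defined in the sense agreed upon immediately after Fact \ref{fact00}. Next, Fact \ref{fact00}(ii) yields the two-sided estimate $\|L_a^A\|\leq \|a\|\leq M\|L_a^A\|$ for every $a\in A$; only the right-hand inequality $\|a\|\leq M\|L_a^A\|$ is needed in what follows.

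The crucial step is then to feed this inequality into Lemma \ref{completion00}, whose hypothesis is thereby met verbatim. That lemma delivers at once that the natural involution of $A$ is continuous and that the completion $\widehat{A}$ is a left operator $V$-algebra whose natural involution extends that of $A$, which accounts for three of the four assertions of the proposition. It remains only to produce the bounded right approximate identity in $\widehat{A}$: since $A$ is a dense subalgebra of $\widehat{A}$ and $u_\lambda $ is a bounded right approximate identity in $A$, Lemma \ref{approximate} (applied with $\widehat{A}$ in the role of the ambient algebra and $A$ in the role of the dense subalgebra) shows that $u_\lambda $ is a right approximate identity in $\widehat{A}$, still bounded by $M$; it is literally the same approximate identity that $A$ had.

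There is no genuine obstacle here, since the statement is a corollary assembled from prior results. The only point requiring a moment's thought, and which I regard as the conceptual crux, is the recognition that the bounded-approximate-identity hypothesis converts, \emph{via} Fact \ref{fact00}(ii), into precisely the norm-comparison hypothesis $\|a\|\leq M\|L_a^A\|$ demanded by Lemma \ref{completion00}; once this translation is made, the rest is routine bookkeeping.
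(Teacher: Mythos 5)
Your proposal is correct and follows essentially the same route as the paper's own proof: fix the approximate identity bounded by $M\geq 1$, use Fact \ref{fact00}(ii) to obtain $\|a\|\leq M\|L_a^A\|$, invoke Lemma \ref{completion00} for the continuity of the involution and the passage to the completion, and finish with Lemma \ref{approximate} to keep the same bounded right approximate identity in $\widehat{A}$. Your extra remark that Fact \ref{fact00}(i) guarantees the natural involution is well defined is harmless but unnecessary, since Definition \ref{definition2} already covers the case of a right approximate identity.
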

\begin{proof}
Take $M\geq 1$ such that $\|a_\lambda \|\leq M$ for every $\lambda $. Let $h$ and $k$ be in $LOH(A)$. Then, by Fact \ref{fact00}(ii), we have $\|a\|\leq M\|L_a^A\|$ for every $a\in A$. Therefore, by Lemma \ref{completion00}, the natural involution of $A$ is continuous, and the completion of $A$ is a left operator $V$-algebra whose natural involution extends that of $A$. The remaining part of the proposition follows from Lemma \ref{approximate}.
\end{proof}

\begin{lemma} \label{associative}
Let $A$ be a normed algebra over $\K$ having a right approximate identity $($eventually bounded by $M\geq 1$$)$, and let $B$ be a dense subalgebra of~$A$. Then there exists a right approximate identity in $B$ $($eventually bounded by $M$$)$.
\end{lemma}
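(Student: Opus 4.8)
The plan is to build the desired net in $B$ directly, indexing it not by the original directed set $\Lambda$ carrying the approximate identity $a_\lambda$ of $A$, but by the refined directed set $\mathcal{D}$ of all pairs $(F,\varepsilon)$, where $F$ is a finite subset of $A$ and $\varepsilon>0$, ordered by declaring $(F,\varepsilon)\preceq (F',\varepsilon')$ whenever $F\subseteq F'$ and $\varepsilon'\leq \varepsilon$. This is a directed set, and to each of its members I shall attach an element $b_{(F,\varepsilon)}\in B$ so that the resulting net becomes a right approximate identity in $B$. Everything therefore reduces to the following local claim: \emph{for every finite subset $F\subseteq A$ and every $\varepsilon>0$ there exists $b\in B$ with $\|b\|\leq M$ and $\|a-ab\|<\varepsilon$ for all $a\in F$} (in the case where no bound $M$ is prescribed, one simply drops the requirement $\|b\|\leq M$).

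Granting the claim, I would finish as follows. For each $(F,\varepsilon)\in\mathcal{D}$ pick $b_{(F,\varepsilon)}$ as in the claim. Given $a\in A$ and $\eta>0$, for every $(F,\varepsilon)\succeq(\{a\},\eta)$ one has $a\in F$ and $\varepsilon\leq\eta$, whence $\|a-ab_{(F,\varepsilon)}\|<\varepsilon\leq\eta$. Thus $\lim_{(F,\varepsilon)} a\,b_{(F,\varepsilon)}=a$ for every $a\in A$, so $(b_{(F,\varepsilon)})$ is a right approximate identity in $B$; and in the bounded case it satisfies $\|b_{(F,\varepsilon)}\|\leq M$ by construction.

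The only real content is the local claim, and the one delicate point in it is keeping the norm bound at exactly $M$ rather than at $M+\varepsilon$, which is where I would insert a rescaling trick. Writing $F=\{a_1,\dots,a_n\}$ and $R:=1+\max_i\|a_i\|$, first use the approximate-identity property of $a_\lambda$ together with the directedness of $\Lambda$ to pick a single index $\lambda$ with $\|a_i-a_ia_\lambda\|<\varepsilon/3$ for all $i$. Since $\|a_\lambda\|\leq M$, I next choose $t\in(0,1)$ close enough to $1$ that $(1-t)RM<\varepsilon/3$; then $\|t a_\lambda\|\leq tM<M$, and $\|a_i-a_i(ta_\lambda)\|\leq\|a_i-a_ia_\lambda\|+(1-t)\|a_ia_\lambda\|<2\varepsilon/3$ for every $i$, using submultiplicativity in the last estimate. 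The strict inequality $\|ta_\lambda\|<M$ now provides the slack needed to approximate inside $B$ without exceeding $M$: by density pick $b\in B$ with $\|b-ta_\lambda\|<\min\{M-tM,\ \varepsilon/(3R)\}$, so that $\|b\|<M$ and, for each $i$, $\|a_i-a_ib\|\leq\|a_i-a_i(ta_\lambda)\|+\|a_i\|\,\|ta_\lambda-b\|<2\varepsilon/3+\varepsilon/3=\varepsilon$. This is the promised $b$, and it settles the claim (in the unbounded case the rescaling step is superfluous: one approximates $a_\lambda$ itself). The expected main obstacle is precisely this boundedness bookkeeping; once the scaling factor $t$ is inserted, the remaining estimates are routine applications of the triangle inequality and submultiplicativity.
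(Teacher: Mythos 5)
Your proof is correct: the local claim, the rescaling by $t<1$ to keep the bound at exactly $M$, and the re-indexing over the directed set of pairs $(F,\varepsilon)$ all check out, and in fact you prove slightly more than stated, since your net in $B$ satisfies $\lim_{(F,\varepsilon)} a\,b_{(F,\varepsilon)}=a$ for \emph{every} $a\in A$, i.e.\ it is a right approximate identity for all of $A$, not merely for $B$. The paper takes a shorter, citation-based route: it sets $\mathcal{C}:=B$ (respectively $\mathcal{C}:=M\B_B$ in the bounded case), observes that $\mathcal{C}$ is convex and that each $a_\lambda$ lies in the closure of $\mathcal{C}$ in $A$, and then invokes \cite[Lemma 8.1.138]{CRbis}, which produces a right approximate identity in $A$ consisting of elements of $\mathcal{C}$ --- the same stronger conclusion you reach. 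The two arguments are close in substance: the cited lemma encapsulates your re-indexing step, and the paper's assertion that each $a_\lambda$ lies in the closure of $M\B_B$ is precisely where your scaling trick lives (the closure of $M\B_B$ in $A$ is $M\B_A$, seen by replacing $a_\lambda$ with $ta_\lambda$ for $t<1$ and then approximating from the dense subalgebra, the strict slack $M-tM$ being the point, exactly as in your estimate). What your version buys is self-containedness and an explicit display of the one delicate issue, the boundedness bookkeeping; what the paper's version buys is brevity and reuse of machinery already available in the companion treatise.
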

\begin{proof}
Let $a_\lambda $ be the right approximate identity in $A$ $($eventually bounded by~$M$$)$ whose existence has been assumed. Let $\mathcal C $ stand for $B$ (eventually, for $M\B _B$).
Then, since $\mathcal C$ is convex, and, $a_\lambda $ lies in the closure of $\mathcal C$ in $A$  for every~$\lambda $, if follows from \cite[Lemma 8.1.138]{CRbis} that there exists a right approximate identity in $A$  consisting of elements of $\mathcal C$.
\end{proof}

\textit{Pre-$C^*$-algebras} (respectively, \textit{alternative pre-$C^*$-algebras}) are defined as $C^*$-algebras (respectively, alternative $C^*$-algebras), but dispensing the requirement of completeness.

Now we can prove the completeness-free version of Proposition \ref{finalfinalbis}.

\begin{theorem} \label{completeness-freeVP}
For a normed alternative complex algebra $A$, the following conditions are equivalent:
\begin{enumerate}
\item[\rm (i)] $A$ is a nonzero alternative pre-$C^*$-algebra.
\item[\rm (ii)] $A$ is a left operator $V$-algebra having a right approximate identity bounded by $1$ $($endowing $A$ with its natural involution$)$.
\item[\rm (iii)] $A$ is a left operator $V$-algebra such that the equality $\|L_a^A\|=\|a\|$ holds for every $a\in A$ $($endowing $A$ with its natural involution$)$.
\end{enumerate}
\end{theorem}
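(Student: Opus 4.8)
The plan is to prove the cycle (i)$\Rightarrow$(ii)$\Rightarrow$(iii)$\Rightarrow$(i) by transferring everything to the completion $\widehat{A}$ of $A$, where the already-established complete version, Proposition~\ref{finalfinalbis}, is available. Throughout I would use that alternativity, the inequality $\|L_a^A\|\le\|a\|$, and the Gelfand--Naimark identity all pass to $\widehat{A}$ by continuity, so that $\widehat{A}$ is again a complete normed alternative complex algebra. The delicate point, flagged below, is to arrange that the hypotheses of Proposition~\ref{finalfinalbis} hold for $\widehat{A}$, and conversely to pull its conclusions back to the dense subalgebra $A$.

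\emph{(i)$\Rightarrow$(ii).} If $A$ is a nonzero alternative pre-$C^*$-algebra, then its completion $\widehat{A}$ is a nonzero alternative $C^*$-algebra, so by the implication (i)$\Rightarrow$(ii) of Proposition~\ref{finalfinalbis} it is a left operator $V$-algebra having a right approximate identity bounded by $1$, whose natural involution is its $C^*$-involution. Since $A$ is dense in $\widehat{A}$, Lemma~\ref{associative} (with ambient algebra $\widehat{A}$, dense subalgebra $A$, and $M=1$) furnishes a right approximate identity in $A$ bounded by $1$. To see that $A$ is itself a left operator $V$-algebra I would use Lemma~\ref{bycontinuity} to write $LOH(A)=A\cap LOH(\widehat{A})$; as the natural involution of $\widehat{A}$ is its $C^*$-involution, $LOH(\widehat{A})$ is exactly the set of self-adjoint elements of $\widehat{A}$, whence $LOH(A)$ is the set $\{a\in A:a^*=a\}$. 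Since the $*$-algebra $A$ is spanned over $\C$ by its self-adjoint part, $A$ equals the complex linear hull of $LOH(A)$, and by uniqueness of the natural involution it coincides with the given pre-$C^*$ involution.

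\emph{(ii)$\Rightarrow$(iii).} This is the special case $M=1$ of Fact~\ref{fact00}(ii), which then forces $\|L_a^A\|\le\|a\|\le\|L_a^A\|$, i.e. $\|L_a^A\|=\|a\|$ for every $a\in A$. \emph{(iii)$\Rightarrow$(i).} Assuming $\|L_a^A\|=\|a\|$ for all $a\in A$, we have $l$-$\Ann(A)=0$, so the natural involution of $A$ is available, and Lemma~\ref{completion00} (with $M=1$) shows that $\widehat{A}$ is a left operator $V$-algebra whose natural involution extends that of $A$.

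The step that needs care is to check that $\widehat{A}$ again satisfies hypothesis (iii) of Proposition~\ref{finalfinalbis}, namely $\|L_a^{\widehat{A}}\|=\|a\|$ for \emph{every} $a\in\widehat{A}$, not merely for $a\in A$. For $a\in A$ this holds because $L_a^{\widehat{A}}$ is the continuous extension of $L_a^A$ and so keeps its norm; for general $a\in\widehat{A}$ I would approximate $a$ by a sequence from $A$ and invoke the automatic contractivity, hence continuity, of $a\mapsto L_a^{\widehat{A}}$ together with continuity of the norm. Granting this, $\widehat{A}$ is a complete normed alternative complex algebra satisfying (iii) of Proposition~\ref{finalfinalbis}, hence a nonzero alternative $C^*$-algebra. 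Since the natural involution of $A$ is the restriction of that of $\widehat{A}$, the algebra $A$ is a $*$-subalgebra of the alternative $C^*$-algebra $\widehat{A}$, inheriting its algebra involution and the identity $\|a^*a\|=\|a\|^2$; thus $A$ is a nonzero alternative pre-$C^*$-algebra. I expect the main obstacle to be exactly this propagation of the \emph{exact} isometry $\|L_a\|=\|a\|$ from $A$ to all of $\widehat{A}$, together with the bookkeeping ensuring that the natural involution furnished by the left operator $V$-algebra structure is, at both ends, the pre-$C^*$ involution.
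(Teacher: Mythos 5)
Your proposal is correct and follows essentially the same route as the paper's proof: pass to the completion $\widehat{A}$, invoke Proposition~\ref{finalfinalbis} there, use Lemma~\ref{associative} to pull the approximate identity down to $A$ and Lemma~\ref{completion00} (after the density argument extending $\|L_a\|=\|a\|$ to all of $\widehat{A}$, which is exactly the paper's ``easily verified'' step) for (iii)$\Rightarrow$(i). The only cosmetic difference is that in (i)$\Rightarrow$(ii) you deduce that $A$ is a left operator $V$-algebra via Lemma~\ref{bycontinuity} and the self-adjoint spanning of the pre-$C^*$-involution, where the paper cites Proposition~\ref{starsubalgebra}; both are valid instances of the same idea.
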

\begin{proof}
(i)$\Rightarrow $(ii) Suppose that (i) holds. Then the completion of $A$ (say $\widehat{A}$) is an alternative $C^*$-algebra. Therefore, by the easy part of Theorem \ref{unit-freeVP}, $\widehat{A}$ is a left operator $V$-algebra, and has an approximate identity bounded by $1$. It follows from Lemma \ref{associative} that there is a right approximate identity in $A$ bounded by $1$. Moreover, by Proposition \ref{starsubalgebra}, $A$ is a left operator $V$-algebra.

(ii)$\Rightarrow $(iii) By Fact \ref{fact00}(ii).

(iii)$\Rightarrow $(i) Suppose that (iii) holds. Then it is easily  verified that the equality $\|L_a^{\widehat{A}}\|=\|a\|$ holds for every $a\in \widehat{A}$. Therefore, by Lemma \ref{completion00}, $\widehat{A}$ is a left operator $V$-algebra.
 It follows from the implication (iii)$\Rightarrow $(i) in Proposition \ref{finalfinalbis} that $\widehat{A}$ is an alternative $C^*$-algebra, and hence $A$ is a pre-$C^*$-algebra.
\end{proof}

Of course, the above theorem remains true with ``associative" instead of ``alternative" at the beginning, and ``pre-$C^*$-algebra" instead of ``alternative pre-$C^*$-algebra" in condition (i). In particular we are provided with the following.

\begin{corollary} \label{unit-freeVP00}
Nonzero pre-$C^*$-algebras are precisely the associative left operator $V$-algebras having a right approximate identity bounded by $1$, endowed with their natural involutions.
\end{corollary}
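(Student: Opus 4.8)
The plan is to obtain the corollary as the equivalence (i)$\Leftrightarrow$(ii) in the associative version of Theorem \ref{completeness-freeVP}, whose validity is asserted in the sentence preceding the statement. The key observation that makes this work is that associativity implies alternativity, so every associative normed complex algebra is eligible for Theorem \ref{completeness-freeVP}, while an alternative pre-$C^*$-algebra that happens to be associative is simply a pre-$C^*$-algebra.

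Concretely, for the first inclusion I would let $A$ be a nonzero pre-$C^*$-algebra; being associative it is alternative, hence a nonzero alternative pre-$C^*$-algebra, so condition (i) of Theorem \ref{completeness-freeVP} holds and its implication (i)$\Rightarrow$(ii) shows that $A$ is a left operator $V$-algebra with a right approximate identity bounded by $1$. As $A$ is associative, it lies in the asserted class. For the reverse inclusion I would let $A$ be an associative left operator $V$-algebra with a right approximate identity bounded by $1$; since $A$ is alternative, condition (ii) holds, and (ii)$\Rightarrow$(i) gives that $A$ is a nonzero alternative pre-$C^*$-algebra, which, being associative, is a pre-$C^*$-algebra. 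In both directions the involution produced by Theorem \ref{completeness-freeVP} is the natural one, which is well defined because the existence of a right approximate identity forces $l$-$\Ann (A)=0$ by Fact \ref{fact00}(i).

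There is no genuine obstacle in this argument: the entire analytic content is already packaged inside Theorem \ref{completeness-freeVP}, and all that remains is to record the elementary reductions between the associative and the alternative frameworks. The only point deserving explicit mention is the compatibility of the natural involutions, which follows since the natural involution of a pre-$C^*$-algebra, viewed as a left operator $V$-algebra with zero left annihilator, coincides with its $*$-operation as established in the easy part of Theorem \ref{unit-freeVP}.
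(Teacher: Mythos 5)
Your proposal is correct and takes essentially the same route as the paper, which obtains the corollary precisely as the associative specialization of Theorem \ref{completeness-freeVP} (the paper simply notes that the theorem ``remains true with `associative' instead of `alternative'\,'' and reads off the equivalence (i)$\Leftrightarrow$(ii)). The reductions you spell out---associativity implying alternativity, an associative alternative pre-$C^*$-algebra being a pre-$C^*$-algebra, and the compatibility of the natural involution, which is already packaged in the parenthetical clause of Theorem \ref{completeness-freeVP}---are exactly the routine steps the paper leaves implicit.
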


Now we are going to show that left operator $V$-algebras behave reasonably well concerning the so-called ``contractive projection problem". To this end, we begin by formulating the following fact, whose proof is straightforward.

\begin{fact} \label{factotum}
Let $X$  be a normed space over $\K$, let $\pi :X\to X$ be a contractive linear projection, set $Y:=\pi (X)$, and for $F\in BL(Y)$ let
$\Phi (F)\in BL(X)$
 be defined by $\Phi (F)(x):=F(\pi (x))$. Then $\Phi :BL(Y)\to BL(X)$ is an isometric algebra homomorphism whose range is equal to $\pi \circ BL(X)\circ \pi $.
\end{fact}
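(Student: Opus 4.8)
The plan is to establish the three asserted properties---linearity together with multiplicativity, norm-preservation, and the description of the range---in turn, relying throughout on the single structural observation that $\pi$ restricts to the identity on its own range. Indeed, since $\pi$ is a projection, any $y\in Y=\pi(X)$ has the form $y=\pi(x)$, whence $\pi(y)=\pi(\pi(x))=\pi(x)=y$; thus $\pi(y)=y$ for every $y\in Y$, and this identity is what makes $\pi$ disappear at the right moments in each computation below.

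For the algebraic part, linearity of $\Phi$ is immediate from the definition, since $\Phi(\alpha F+\beta G)(x)=(\alpha F+\beta G)(\pi(x))=\alpha F(\pi(x))+\beta G(\pi(x))$. For multiplicativity I would compute, for $F,G\in BL(Y)$ and $x\in X$, that $\Phi(F\circ G)(x)=F(G(\pi(x)))$ on the one hand, while $(\Phi(F)\circ\Phi(G))(x)=\Phi(F)\big(G(\pi(x))\big)=F\big(\pi(G(\pi(x)))\big)$ on the other. Since $G$ maps $Y$ into $Y$, the element $G(\pi(x))$ lies in $Y$ and is therefore fixed by $\pi$, so the two expressions coincide and $\Phi$ is an algebra homomorphism.

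For the isometry I would use $\|\pi\|\leq 1$ to obtain one inequality and the identity $\pi|_Y=I_Y$ to obtain the other. On the unit ball of $X$ the points $\pi(x)$ range within the unit ball of $Y$, so $\|\Phi(F)\|=\sup_{\|x\|\leq 1}\|F(\pi(x))\|\leq\|F\|$; conversely, evaluating $\Phi(F)$ at $y\in Y$ gives $\Phi(F)(y)=F(\pi(y))=F(y)$, so the supremum defining $\|\Phi(F)\|$, taken over the whole unit ball of $X$, already dominates $\sup_{y\in Y,\,\|y\|\leq 1}\|F(y)\|=\|F\|$. Hence $\|\Phi(F)\|=\|F\|$.

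Finally, for the range I would prove the two inclusions by exhibiting the relevant fixed-point identities. Writing $\tilde F:=\Phi(F)$, one checks $\tilde F\circ\pi=\tilde F$ and $\pi\circ\tilde F=\tilde F$ (the latter because $\tilde F$ takes values in $Y$), so $\tilde F=\pi\circ\tilde F\circ\pi\in\pi\circ BL(X)\circ\pi$, giving $\Phi(BL(Y))\subseteq\pi\circ BL(X)\circ\pi$. Conversely, given $T\in BL(X)$, the map $F:=(\pi\circ T)|_Y$ lies in $BL(Y)$ (it sends $Y$ into $Y$ and $\|F(y)\|\leq\|T\|\,\|y\|$) and satisfies $\Phi(F)(x)=\pi(T(\pi(x)))=(\pi\circ T\circ\pi)(x)$, so $\pi\circ T\circ\pi$ lies in the range of $\Phi$. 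No step presents a genuine obstacle---the statement is, as noted, routine---and the only point to keep in mind throughout is that every composition is read with $F$ and $G$ acting on $Y$, which is precisely what triggers the collapse $\pi|_Y=I_Y$.
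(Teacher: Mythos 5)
Your proof is correct, and it coincides with the paper's treatment: the paper states Fact \ref{factotum} without proof, declaring it straightforward, and your verification---using $\pi|_{Y}=I_{Y}$ to collapse the compositions in the multiplicativity and lower norm estimates, and the identities $\Phi(F)=\pi\circ\Phi(F)\circ\pi$ and $\Phi\bigl((\pi\circ T)|_{Y}\bigr)=\pi\circ T\circ\pi$ for the two range inclusions---is exactly the routine argument intended. No gaps; as a side remark, your computation also yields $\Phi(I_{Y})=\pi$, which is the only property (besides isometry) the paper subsequently uses.
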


 In what follows we will apply only that $\Phi $ is a linear isometry such that $\Phi (I_Y)=\pi $.

\begin{lemma} \label{porvafor2}
Let $A$ be a complex normed algebra, let $\pi
:A\to A$ be a nonzero contractive  linear projection satisfying the ``left weak conditional expectation"
\begin{equation} \label{Hamana}
\mbox{$\pi (\pi (a)\pi(b))=\pi (a\pi (b))$ \ for all \ $a,b\in A$},
\end{equation}
and let $B:=(\pi (A),\odot^\pi )$ denote the normed algebra consisting of the normed space of $\pi (A)$ and the product $x\odot^\pi y:=\pi (xy)$. Then the inclusion
\[
LOV(B,\pi (a))\subseteq LOV(A,a)
\]
holds for every $a\in A$. Therefore $\pi (LOH(A))\subseteq LOH(B)$.
\end{lemma}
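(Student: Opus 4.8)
The plan is to transport the computation of $LOV(B,\pi(a))$ back to $A$ by means of the isometry $\Phi$ furnished by Fact \ref{factotum}, and then to correct for the resulting change of distinguished point. First I would dispose of the harmless preliminaries: $B=(\pi(A),\odot^\pi)$ really is a normed algebra, since $\|\pi(xy)\|\le\|xy\|\le\|x\|\|y\|$; and a nonzero contractive idempotent satisfies $\|\pi\|=1$ (from $\pi^2=\pi$), so $\pi$ is a legitimate norm-one distinguished element of $BL(A)$. I then apply Fact \ref{factotum} with $X:=A$ and $Y:=\pi(A)$, whose underlying normed space is that of $B$, to obtain a linear isometry $\Phi:BL(B)\to BL(A)$ with $\Phi(I_B)=\pi$, exactly as announced in the remark preceding the lemma.

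The crucial step, and the one I expect to be the only nonroutine point, is the identity
$$\Phi(L_{\pi(a)}^B)=\pi\circ L_a^A\circ\pi\qquad(a\in A).$$
To verify it I evaluate at an arbitrary $x\in A$: by the definition of $\Phi$ and of $\odot^\pi$ one has $\Phi(L_{\pi(a)}^B)(x)=L_{\pi(a)}^B(\pi(x))=\pi(\pi(a)\pi(x))$, and condition \eqref{Hamana} converts the right-hand side into $\pi(a\pi(x))=(\pi\circ L_a^A\circ\pi)(x)$. This is precisely where the left weak conditional expectation hypothesis is indispensable; everything else is numerical-range bookkeeping.

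With the identity in hand the rest is automatic, and it exploits both halves of the local behaviour of numerical ranges. Since $\Phi$ is an \emph{isometry} taking $I_B$ to $\pi$, \cite[Corollary 2.1.2(ii)]{CR} gives the equality
$$LOV(B,\pi(a))=V(BL(B),I_B,L_{\pi(a)}^B)=V(BL(A),\pi,\pi L_a^A\pi).$$
To replace the distinguished point $\pi$ by $I_A$, I consider the \emph{contractive} linear map $T\mapsto\pi T\pi$ on $BL(A)$, which sends $I_A$ to $\pi$ and $L_a^A$ to $\pi L_a^A\pi$; then \cite[Corollary 2.1.2(i)]{CR} yields
$$V(BL(A),\pi,\pi L_a^A\pi)\subseteq V(BL(A),I_A,L_a^A)=LOV(A,a).$$
Chaining the two displays proves $LOV(B,\pi(a))\subseteq LOV(A,a)$ for every $a\in A$. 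Finally, if $a\in LOH(A)$ then $LOV(A,a)\subseteq\R$, whence $LOV(B,\pi(a))\subseteq\R$, i.e. $\pi(a)\in LOH(B)$; letting $a$ range over $LOH(A)$ gives the asserted inclusion $\pi(LOH(A))\subseteq LOH(B)$.
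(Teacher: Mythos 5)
Your proposal is correct and follows essentially the same route as the paper: both invoke Fact \ref{factotum} to get the isometry $\Phi$ with $\Phi(I_B)=\pi$, establish $\Phi(L_{\pi(a)}^B)=\pi\circ L_a^A\circ\pi$ via \eqref{Hamana}, and then chain \cite[Corollary 2.1.2(ii)]{CR} (for the isometry $\Phi$) with \cite[Corollary 2.1.2(i)]{CR} (for the contraction $G\mapsto\pi\circ G\circ\pi$). The only difference is cosmetic: you verify the key operator identity by direct evaluation in one step, whereas the paper splits it as $\Phi(L_x^B)=\pi\circ L_x^A\circ\pi$ for $x\in B$ followed by the rewriting $\pi\circ L_{\pi(a)}^A\circ\pi=\pi\circ L_a^A\circ\pi$, and you spell out the routine final deduction that the paper compresses into ``Therefore.''
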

\begin{proof}
Let $\Phi :BL(B)\to BL(A)$ be the isometric algebra homomorphism given by Fact \ref{factotum} (when $(A,B)$ replaces $(X,Y)$). Then, as observed at the beginning of the proof of \cite[Lemma 3.2]{R99}, for $x\in B$ we have $\Phi (L_x^B)=\pi \circ L_x^A\circ \pi $. Now let $a$ be in $A$, and note that \eqref{Hamana} can be rewritten as that $\pi \circ L_{\pi (a)}^A\circ \pi =\pi \circ L_a^A\circ \pi $ for every $a\in A$. Since $\Phi (I_B)=\pi $, it follows from \cite[Corollary 2.1.2(ii)]{CR} that
\[
LOV(B,\pi (a))=V(BL(A),\pi ,\Phi (L_{\pi (a)}^B))=V(BL(A),\pi ,\pi \circ L_a^A\circ \pi ).
\]
But, since the mapping $G\to \pi \circ G\circ \pi $ is a linear contraction from $BL(A)$ to $\pi \circ BL(A)\circ \pi $ taking $I_A$ to $\pi $, it follows from \cite[Corollary 2.1.2(i)]{CR} that $V(BL(A),\pi ,\pi \circ L_a^A\circ \pi )\subseteq LOV(A,a)$, which concludes the proof.
\end{proof}

 Now we can prove the main result in this section, namely the following.

\begin{theorem} \label{martes}
Let $A$ be a left operator $V$-algebra, let  $\pi
:A\to A$ be a nonzero contractive  linear projection satisfying
the ``weak conditional expectation"
\begin{equation} \label{pasadomagnana}
\mbox{$\pi (\pi (a)b)=\pi (\pi (a)\pi(b))=\pi (a\pi (b))$ \ for all \ $a,b\in A$},
\end{equation}
and set $B:=(\pi (A),\odot ^\pi )$. Then $B$ is a left operator $V$-algebra. Moreover, if in addition $A$ has a right approximate identity $($eventually bounded by $1$$)$, then we have:
\begin{enumerate}
\item[\rm (i)] $B$ has a right approximate identity $($eventually bounded by $1$$)$, and the natural involution of $B$ $($say $\natural $$)$ is given by $x^\natural =\pi (x^*)$, where $*$ denotes the natural involution of $A$.
\item[\rm (ii)] $\pi =\pi \circ \pi ^* $, hence $\pi ^*=\pi ^*\circ \pi $, and $\ker (\pi ^*)=\ker (\pi )$ is a $*$-invariant subspace of $A$.
\item[\rm (iii)] If $*$ is an isometry, then every convex combination of $\pi $ and $\pi ^*$ is a contractive linear projection on $A$.
 \end{enumerate}
 \end{theorem}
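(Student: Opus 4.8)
The plan is to bootstrap everything from Lemma \ref{porvafor2}, noting that the weak conditional expectation \eqref{pasadomagnana} contains in particular the left weak conditional expectation \eqref{Hamana} required there, so that $\pi(LOH(A))\subseteq LOH(B)$. First I would prove the unconditional assertion that $B$ is a left operator $V$-algebra: given $x\in B$, write $x=\pi(a)$ and expand $a$ as a finite complex combination $\sum_j\lambda_j h_j$ with $h_j\in LOH(A)$ (possible since $A$ is the complex linear hull of $LOH(A)$); applying $\pi$ and using $\pi(h_j)\in LOH(B)$ exhibits $x$ in the complex linear hull of $LOH(B)$. As $x$ is arbitrary and $\pi$ maps onto $B$, this shows $B$ is a left operator $V$-algebra.

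For the approximate identity in (i) I would test the candidate $\pi(a_\lambda)$. Since $\pi$ is contractive, $\|\pi(a_\lambda)\|\le\|a_\lambda\|$, so the bound is inherited. For $x\in B$ we have $\pi(x)=x$, so the outer equality in \eqref{pasadomagnana} (with $a:=x$, $b:=a_\lambda$) reads $\pi(xa_\lambda)=\pi(x\pi(a_\lambda))=x\odot^\pi\pi(a_\lambda)$; letting $\lambda$ run and using $xa_\lambda\to x$ together with the continuity of $\pi$ gives $x\odot^\pi\pi(a_\lambda)\to x$. Thus $B$ acquires a right approximate identity (bounded by $1$ when $A$'s is), hence has zero left annihilator, so $LOH(B)\cap iLOH(B)=0$ and the natural involution $\natural$ of $B$ exists by Lemma \ref{4.100}(ii).

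The crux is the formula $x^\natural=\pi(x^*)$, and everything hinges on the refinement $LOH(B)=\pi(LOH(A))$. The inclusion $\supseteq$ is Lemma \ref{porvafor2}; for $\subseteq$, take $y\in LOH(B)$, write $y=\pi(a)$ with $a=h+ik$ ($h,k\in LOH(A)$), so $y=\pi(h)+i\pi(k)$ with $\pi(h),\pi(k)\in LOH(B)$, and since $y\in LOH(B)$ and $B=LOH(B)\oplus iLOH(B)$ is a direct sum, the $iLOH(B)$-component forces $\pi(k)=0$, giving $y=\pi(h)\in\pi(LOH(A))$. The key identity is then $\pi\big((\pi(h))^*\big)=\pi(h)$ for $h\in LOH(A)$: writing $w:=\pi(h)\in LOH(B)\subseteq A$ and decomposing $w=p+iq$ in $A=LOH(A)\oplus iLOH(A)$, applying $\pi$ gives $w=\pi(w)=\pi(p)+i\pi(q)$ with $\pi(p),\pi(q)\in LOH(B)$; directness again forces $\pi(q)=0$, whence $\pi(w^*)=\pi(p-iq)=\pi(p)=w$. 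Extending conjugate-linearly over $a=h+ik$ yields $\pi\big((\pi(a))^*\big)=\pi(a^*)$ for all $a$, and reading off the $\natural$-decomposition of $x=\pi(a)$ as $\pi(h)+i\pi(k)$ gives $x^\natural=\pi(h)-i\pi(k)=\pi(a^*)=\pi(x^*)$. I expect this reverse inclusion, i.e. transporting the involution of $A$ through $\pi$, to be the main obstacle; the remaining parts are formal.

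Finally, (ii) and (iii) are bookkeeping on top of this. The identity $\pi\circ\pi^*=\pi$ is exactly the crux identity $\pi\big((\pi(a^*))^*\big)=\pi(a)$ applied with $a^*$ in place of the argument, since $\pi^*(a)=(\pi(a^*))^*$. Applying the involution calculus $(F\circ G)^*=F^*\circ G^*$ and $(\pi^*)^*=\pi$ to $\pi=\pi\circ\pi^*$ yields $\pi^*=\pi^*\circ\pi$; from these two, $\pi^*(a)=0\Rightarrow\pi(a)=0$ and $\pi(a)=0\Rightarrow\pi^*(a)=0$, so $\ker\pi=\ker\pi^*$, and $*$-invariance follows because $\pi^*(a)=(\pi(a^*))^*$ gives $\pi(a)=0\Leftrightarrow\pi^*(a)=0\Leftrightarrow\pi(a^*)=0$. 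For (iii), when $*$ is an isometry $\pi^*$ is contractive (as $\|\pi^*(a)\|=\|\pi(a^*)\|\le\|a\|$) and idempotent (take $*$ of $\pi^2=\pi$); writing $\rho=t\pi+(1-t)\pi^*$ and expanding $\rho^2$, the relations $\pi^2=\pi$, $(\pi^*)^2=\pi^*$, $\pi\pi^*=\pi$, $\pi^*\pi=\pi^*$ collapse the coefficients back to $t$ and $1-t$, so $\rho^2=\rho$, while contractivity of $\rho$ is just the triangle inequality.
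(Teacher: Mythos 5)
Your proposal is correct and follows essentially the same route as the paper: Lemma \ref{porvafor2} gives $\pi(LOH(A))\subseteq LOH(B)$ and the spanning of $B$, the identity \eqref{pasadomagnana} applied to $a_\lambda$ yields the right approximate identity $\pi(a_\lambda)$ in $B$, and (ii) and (iii) follow by the same formal involution calculus. Your only deviation is a harmless detour: the refinement $LOH(B)=\pi(LOH(A))$ and the identity $\pi\bigl((\pi(h))^*\bigr)=\pi(h)$ are superfluous, since for $x\in B$ one may take the preimage $a:=x$ itself (as $\pi(x)=x$) and decompose $x=h+ik$ in $A$, whence $\pi(x^*)=\pi(h)-i\pi(k)=(\pi(h)+i\pi(k))^\natural=x^\natural$ directly, which is exactly what the paper does.
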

\begin{proof}
Let $x$ be in $B$. Then $x=h+ik$ for suitable $h,k\in LOH(A)$. Therefore $x=\pi (h)+i\pi (k)$, with $\pi (h),\pi (k)\in LOH(B)$ thanks to Lemma \ref{porvafor2}. Since $x$ is arbitrary in $B$, this shows that $B$ is a left operator $V$-algebra.

Suppose that $A$ has a right approximate identity $a_\lambda $ (eventually bounded by $1$). Then, by the first equality in \eqref{pasadomagnana}, for $a\in A$ we have
\[
\mbox{$\lim _\lambda \pi (a)\odot^\pi \pi(a_\lambda )=\lim _\lambda \pi (\pi (a)\pi(a_\lambda ))=\lim _\lambda \pi (\pi (a)a_\lambda )=\pi (\pi (a))=\pi (a).$}
\]
Therefore $\pi (a_\lambda )$ is a right approximate identity (eventually bounded by $1$) for $B$. Let $x$ be in $B$. Write $x=h+ik$  with $h,k\in LOH(A)$. Then we have
\[
\pi (x^*)=\pi (h)-i\pi (k)=(\pi (h)+i\pi (k))^\natural =\pi (x)^\natural =x^\natural ,
\]
where for the second equality we have applied Lemma \ref{porvafor2} again. Thus (i) has been proved.
Now let $a$ be in $A$. Then, by Lemma \ref{porvafor2} once more, we have $\pi (a^*)=(\pi (a))^\natural $. Therefore, replacing $a$ with $a^*$, and considering~(i), we obtain
$\pi (a)=(\pi (a^*))^\natural =\pi ((\pi (a^*))^*)=\pi (\pi ^*(a))$, which proves (ii) because $a$ is arbitrary in $A$. Now suppose additionally that $*$ is an isometry. Then clearly $\pi^*$ is a contractive linear projection on $A$, so every  convex combination of $\pi $ and $\pi ^*$ is a contractive linear operator on $A$. Let $\alpha ,\beta $ be nonnegative real numbers with $\alpha +\beta =1$. Then, considering (ii), we obtain
\[
(\alpha \pi +\beta \pi^* )^2=\alpha (\alpha \pi +\beta \pi \circ \pi ^*)+\beta (\alpha \pi ^*\circ \pi +\beta \pi ^*)=\alpha \pi +\beta \pi^*.
\]
This concludes the proof of (iii) and of the theorem.
\end{proof}

\begin{corollary} \label{projectiontris}
{\rm \cite[Theorem 4.9]{CR16}}
Let $A$ be a non-commutative $JB^*$-algebra, and let $\pi
:A\to A$ be a contractive linear projection satisfying the ``Jordan weak conditional expectation"
\begin{equation} \lbl{magnana}
\mbox{$\pi (\pi (a)\bullet \pi (b))=\pi (a\bullet \pi (b))$  \ for all $a,b\in A$.}
\end{equation}
 Then we have:
\begin{enumerate}
\item[\rm (i)] $\pi (A)$
becomes a non-commu\-tative   $JB^*$-algebra under
the norm of $A$, the product $\odot ^\pi $, and the involution $x^\natural := \pi(x^*)$.
\item[\rm (ii)] $\pi =\pi \circ \pi ^*$, hence $\pi ^*=\pi ^*\circ \pi $, $\ker (\pi ^*)=\ker (\pi )$ is a $*$-invariant subspace of $A$, and every convex combination of $\pi $ and $\pi ^*$ is a contractive linear projection on $A$.
\end{enumerate}
\end{corollary}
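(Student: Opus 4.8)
The plan is to deduce the corollary from Theorem \ref{martes} applied to the $JB^*$-algebra $A^{\rm sym}$; we may assume $\pi\neq 0$, the case $\pi=0$ being trivial. As preparation, recall that $A^{\rm sym}$ is a $JB^*$-algebra, hence, by Remark \ref{easypart}, a left operator $V$-algebra having an approximate identity bounded by $1$, and that its natural involution coincides with the involution $*$ of $A$, which moreover is isometric. Note also that $\pi(A)=\ker(I_A-\pi)$ is closed in the complete space $A$, so that $\pi(A)$ with the product $\odot^\pi$ is a complete normed complex algebra, submultiplicativity being immediate from the contractivity of $\pi$.

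First I would rewrite the hypothesis \eqref{magnana} as the weak conditional expectation \eqref{pasadomagnana} for the \emph{commutative} algebra $A^{\rm sym}$. Interchanging $a$ and $b$ in \eqref{magnana} and using the commutativity of $\bullet$ gives $\pi(\pi(a)\bullet\pi(b))=\pi(\pi(a)\bullet b)$; combining this with \eqref{magnana} itself yields $\pi(\pi(a)\bullet b)=\pi(\pi(a)\bullet\pi(b))=\pi(a\bullet\pi(b))$ for all $a,b\in A$, which is exactly \eqref{pasadomagnana} for $A^{\rm sym}$.

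Then I would apply Theorem \ref{martes} to $A^{\rm sym}$. This gives that $B:=(\pi(A),\bullet^\pi)$, where $x\bullet^\pi y:=\pi(x\bullet y)$, is a left operator $V$-algebra possessing a right approximate identity bounded by $1$ --- which, by commutativity, is two-sided --- whose natural involution $\natural$ is $x^\natural=\pi(x^*)$. A direct computation shows $x\bullet^\pi y=\tfrac12(\pi(xy)+\pi(yx))=\tfrac12(x\odot^\pi y+y\odot^\pi x)$, i.e. $B=(\pi(A),\odot^\pi)^{\rm sym}$. Hence $(\pi(A),\odot^\pi)$ is a complete normed complex algebra whose symmetrization is a left operator $V$-algebra with an approximate identity bounded by $1$, and the symmetric version of the unit-free Vidav--Palmer theorem proved above forces $(\pi(A),\odot^\pi)$ to be a non-commutative $JB^*$-algebra whose involution, being that of its $JB^*$-algebra symmetrization $B$, is $\natural$; this is assertion (i). Assertion (ii) is then read off directly from parts (ii) and (iii) of Theorem \ref{martes}, the hypothesis of (iii) being fulfilled because $*$ is isometric.

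The step I expect to demand the most care is the passage from the Jordan picture to the full (non-associative) product: Theorem \ref{martes} governs only the projected Jordan product $\bullet^\pi$, since the available hypothesis \eqref{magnana} is purely Jordan-theoretic, so the non-commutative $JB^*$-structure on $(\pi(A),\odot^\pi)$ has to be reassembled from its symmetrization. The two points needing verification are that $\bullet^\pi$ is genuinely the symmetrization of $\odot^\pi$ and that the single Jordan identity \eqref{magnana} does upgrade, via commutativity, to the two-sided expectation \eqref{pasadomagnana} required by Theorem \ref{martes}.
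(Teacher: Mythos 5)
Your proposal is correct and follows essentially the same route as the paper: rewrite \eqref{magnana} as \eqref{pasadomagnana} for $A^{\rm sym}$ via commutativity of $\bullet$, apply Theorem \ref{martes} to the $JB^*$-algebra $A^{\rm sym}$ (whose involution is isometric and coincides, by Remark \ref{easypart}, with the natural one), and then pass from the $JB^*$-structure on $(\pi(A),\odot^\pi)^{\rm sym}$ to the non-commutative $JB^*$-structure on $(\pi(A),\odot^\pi)$. The only cosmetic difference is that you invoke the packaged symmetric unit-free Vidav--Palmer theorem of Section 2, whereas the paper cites its two ingredients directly, namely the commutative particularization of Theorem \ref{unit-freenon-associativeVP} together with the $JB^*$-admissibility result of \cite{R91} (cf. \cite[Definition 3.5.29 and Proposition 3.5.31]{CR}).
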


\begin{proof}[New proof]
Note that, thanks to the commutativity of the product $\bullet $, our assumption \eqref{magnana} is equivalent to the fact that \eqref{pasadomagnana} holds with $A^{\rm sym}$ instead of $A$. Therefore, since $A^{\rm sym}$ is a $JB^*$-algebra \cite[Fact~3.3.4]{CR}, and the involution of a $JB^*$-algebra is an isometry \cite{Y0} (cf. \cite[Proposition~3.3.13]{CR}), we can combine the commutative particularizations of Theorems \ref{unit-freenon-associativeVP} and \ref{martes} to obtain that $(\pi (A),\odot ^\pi )^{\rm sym}$ is a $JB^*$-algebra under
the norm of $A$ and the involution $x^\natural := \pi(x^*)$, and that moreover assertion (ii) holds for $A$.
Now $(\pi (A),\odot ^\pi )$ is a complex normed algebra such that $((\pi (A),\odot )^{\rm sym},\natural )$ is a $JB^*$-algebra, and therefore, by \cite{R91} (cf. \cite[Definition 3.5.29 and Proposition 3.5.31]{CR}), $((\pi (A),\odot ^\pi ),\natural )$ is a non-commutative $JB^*$-algebra.
\end{proof}

Corollary \ref{projectiontris} is only a part of \cite[Theorem 4.9]{CR16}. Thus, among other additional results, \cite[Theorem 4.9]{CR16} assures that Corollary \ref{projectiontris} remains true with ``commutative $C^*$-algebra" instead of ``non-commutative $JB^*$-algebra" in both the assumption and the conclusion. Nevertheless, according to
\cite[Corollary 4.15]{CR16}, Corollary \ref{projectiontris} does not remain true with ``$C^*$-algebra" instead of ``non-commutative $JB^*$-algebra" in both the assumption and the conclusion. Moreover, according to \cite[Proposition 4.14]{CR16}, even if in Corollary \ref{projectiontris} the non-commutative $JB^*$-algebra $A$ is in fact  a commutative $C^*$-algebra, we cannot expect that the range of $\pi $ be $*$-invariant.

\section*{Acknowledgements}
The author wants to thank Miguel Cabrera for his careful reading of the various previous drafts of the current paper, and for his useful suggestions to improve them.

Special thanks are due to the referees for careful reading of the paper and useful suggestions.

\end{document}